\newcommand\Qbinom[3]{\genfrac{[}{]}{0pt}{}{#1}{#2}_{#3}}
\newcommand\qbinom[2]{\Qbinom{#1}{#2}{q}}
\newcommand{\hyp}[5]{\,\mbox{}_{#1}F_{#2}\!\left(
 \genfrac{}{}{0pt}{}{#3}{#4};#5\right)}
\newcommand{\qhyp}[5]{\,\mbox{}_{#1}\phi_{#2}\!\left(
\genfrac{}{}{0pt}{}{#3}{#4};#5\right)}
\newcommand{\Whyp}[5]{\,\mbox{}_{#1}W_{#2}\!\left({#3};{#4};{#5}\right)}
\newcommand{\qphyp}[6]{\,\sideset{_{#1}^{\phantom{\mid}}}{_{#2}^{#3}}
{\mathop{\phi}}\!\left(\genfrac{}{}{0pt}{}{#4}{#5};#6\right)}
\newcommand{\nqphyp}[3]{\,\sideset{_{#1}^{\phantom{\mid}}}{_{#2}^{#3}}{\mathop{\phi}}}
\newcommand{\nlqphyp}[3]{\,\sideset{_{#1}}{_{#2}^{#3}}{\mathop{\phi}}}
\newcommand{\rphis}[2]{{}_{#1\vphantom{#2}}\phi_{#2\vphantom{#1}}}
\newcommand{\rphisx}[4]{\rphis{#1}{#2}\left( \begin{array}{c} #3 \end{array};q,#4\right)}
\newcommand{\II}[1]{\overset{\raisebox{0.5ex}{$#1$}}{\raisebox{-0.55cm}
{\resizebox{0.3cm}{!}{\scalebox{0.4}[1.0]{\mbox{$\mathbb{I}$}}}}}}
\newcommand{\quhyp}[6]{\nlqphyp{#1}{#2}{#3}\left(\genfrac{}{}{0pt}{}{#4}{#5};#6\right)}
\newcommand{\CC}{{{\mathbb C}}}
\newcommand{\CCast}{{{\mathbb C}^\ast}}
\newcommand{\CCdag}{{{\mathbb C}^\dag}}
\newcommand{\SSS}{{\mathcal S}}
\def\cprime{$'$}
\newcommand{\Vast}{\bBigg@{4.9}} 
\newcommand{\vast}{\bBigg@{4}} 
\newtheorem{thm}[lemma]{Theorem}
\newtheorem{cor}[lemma]{Corollary}
\newtheorem{rem}[lemma]{Remark}
\newtheorem{defn}[lemma]{Definition}
\def\eqnarray{\stepcounter{equation}\let\@currentlabel=\theequation
\global\@eqnswtrue
\tabskip\@centering\let\\=\@eqncr
$$\halign to \displaywidth\bgroup\hfil\global\@eqcnt\z@
 $\displaystyle\tabskip\z@{##}$&\global\@eqcnt\@ne
 \hfil$\displaystyle{{}##{}}$\hfil
 &\global\@eqcnt\tw@ $\displaystyle{##}$\hfil
 \tabskip\@centering&\llap{##}\tabskip\z@\cr}
\def\endeqnarray{\@@eqncr\egroup
 \global\advance\c@equation\m@ne$$\global\@ignoretrue}
\def\@yeqncr{\@ifnextchar [{\@xeqncr}{\@xeqncr[5pt]}}
\newcommand{\dd}{{\mathrm d}}
\newcommand{\expe}{{\mathrm e}}
\newcommand{\om}{{\boldsymbol\omega}}
\newcommand{\C}{\mathbb{C}}
\newcommand{\Z}{\mathbb{Z}}
\newcommand{\N}{\mathbb{N}}
\numberwithin{equation}{section}
\numberwithin{equation}{section}
\numberwithin{corollary}{section}
\numberwithin{remark}{section}
\numberwithin{theorem}{section}
\numberwithin{lemma}{section}
\begin{document}

\renewcommand{\PaperNumber}{***}

\FirstPageHeading

\ArticleName{Nonterminating transformations and summations\\associated with some $q$-Mellin--Barnes integrals}
\ShortArticleName{Nonterminating transformations and summations for some $q$-Mellin--Barnes integrals}

% Names of the authors for the title of the paper
\Author{Howard S. Cohl\,$^\dag\!\!\ $ and 
Roberto S. Costas-Santos$\,^{\S}$}

\AuthorNameForHeading{H.~S.~Cohl and R.~S.~Costas-Santos}

\Address{$^\dag$ Applied and Computational
Mathematics Division, National Institute of Standards
and Technology, Mission Viejo, CA 92694, USA
%Address of First Author, Country
\URLaddressD{
\href{http://www.nist.gov/itl/math/msg/howard-s-cohl.cfm}
{http://www.nist.gov/itl/math/msg/howard-s-cohl.cfm}
}
} % Address of First Author
\EmailD{howard.cohl@nist.gov} % E-mail address of First Author

\Address{$^\S$ Dpto. de F\'isica y Matem\'{a}ticas,
Universidad de Alcal\'{a},
c.p. 28871, Alcal\'{a} de Henares, Spain} 
% Address of First Author
\URLaddressD{
\href{http://www.rscosan.com}
{http://www.rscosan.com}
}
\EmailD{rscosa@gmail.com} % E-mail address of First Author

%\Address{$^\S$ Department of Mathematical Sciences,
%University of Wisconsin-Milwaukee,
%Milwaukee, WI 53201-0413, USA
%Address of First Author, Country
%} % Address of First Author
%\EmailD{volkmer@uwm.edu} % E-mail address of First Author

\ArticleDates{Received \today in final form ????; Published online ????}

\Abstract{
In many cases one may encounter an integral which is of
$q$-Mellin--Barnes type. These integrals are easily evaluated
using theorems which have a long history dating back to
Slater, Askey, Gasper, Rahman and others.
We derive some interesting $q$-Mellin--Barnes integrals
and using them we derive transformation
and summation formulas for nonterminating basic hypergeometric 
functions. The cases which we treat include ratios 
of theta functions, the Askey--Wilson moments, nonterminating well-poised
${}_3\phi_2$, nonterminating very-well-poised ${}_5W_4$, ${}_8W_7$, products of two nonterminating ${}_2\phi_1$'s, square of a nonterminating well-poised ${}_2\phi_1$, and nonterminating ${}_{12}W_{11}$ and ${}_{10}W_9$.
}

%``Symmetry, Integrability and Geometry: Methods and Applications''.}
\Keywords{$q$-calculus; nonterminating basic hypergeometric functions; nonterminating transformations;
nonterminating summations;
integral representations;
$q$-Mellin--Barnes integrals; Askey--Wilson 
polynomials; Askey--Wilson moments}
%Please type here List of Keywords for your article separated
%by semicolon.
% Keywords required only for MST, PB, PMB, PM, JOA, JOB?
% Keywords:

\Classification{33D15; 33D60}
%{??????} % e.g. 35A30; 81Q05
%For 2010 Mathematics Subject Classification see
%http://www.ams.org/mathscinet/msc/msc2010.html

\section{Preliminaries}%

%\begin{equation}
%\sideset{_r^p}{_s^t}{\mathop{\phi}}
%\quad
%\sideset{_{C}^{}}{_{A+1}^{C-A-2}}{\mathop{\phi}}
%\qphyp{C}{A+1}{C-A-2}{a}{b}{q,q}
%\quad
%\nqphyp{C}{A+1}{C-A-2}
%\quad\sideset{_r^p}{_s^t}{\phi}
%\end{equation}

There have existed in the past some very
important $q$-Mellin--Barnes integrals. Some
important examples include those given
by Askey--Wilson \cite[{(2.1)}]{AskeyWilson85}, 
Nassrallah--Rahman {\cite[(6.3.9)]{GaspRah}}
as well as those given
in Askey--Roy \cite[(2.8)]{AskeyRoy86}
and in Gasper \cite[(1.8)]{Gasper1989}.
In this paper we take advantage
of the powerful methods following the
pioneering work of Bailey
\cite[Chapter 8]{Bailey64}, 
and his student
Slater 
which were fully {recapitulated}
by Gasper \& Rahman 
in \cite[Chapter 4]{GaspRah}.
We are able to use well-known
formulas for certain highly 
symmetric basic hypergeometric functions
to obtain new $q$-Mellin--Barnes integrals and from them derive a new class of transformation and summation
formulas.

\medskip
We adopt the following set 
notations: $\mathbb N_0:=\{0\}\cup\N=\{0, 1, 2, \ldots\}$, and we 
use the sets $\mathbb Z$, $\mathbb R$, $\mathbb C$ which represent 
the integers, real numbers and 
complex numbers respectively, $\CCast:=\CC\setminus\{0\}$, and 
{$\CCdag:=\{z\in\CCast: |z|<1\}$.}
We also adopt the following notation and conventions.
Given a set ${\bf a}:=\{a_1,\ldots,a_A\}$, for $A\in\N$, then we define ${\bf a}_{[k]}:={\bf a}\setminus\{a_k\}$, $1\le k\le A$,
$b\,{\bf a}:=\{b\, a_1, b\, a_2, \ldots, b\, a_A\}$,
${\bf a}+b:=\{a_1+b,a_2+b,\ldots,a_A+b\}$,
where
$b,a_1,\ldots,a_A\in\mathbb C$.

\begin{rem} \label{rem:2.8}
Observe in the following discussion we will often be referring
to a collection of constants $a,b,c,d,e,f$. In such cases, which will be
clear from context, then the constant $e$ should not be confused
with Euler's number $\expe$, the base of the natural logarithm, i.e.,
$\log\expe=1$. Observe the different
(roman) typography for Euler's number.
\end{rem}

We assume that the empty sum 
vanishes and the empty product 
is unity.
{
We will also adopt the following symmetric sum notation.
\begin{defn}
For some function $f(a_1,\ldots,a_n;{\bf b})$,
where ${\bf b}$ is some set of parameters.
Then 
\begin{equation}
\II{a_1;a_2,\ldots,a_n}\!\!\!
f(a_1,a_2,\ldots,a_n;{\bf b})
:=f(a_1,a_2,\ldots,a_n;{\bf b})+
{\mathrm{idem}}(a_1;a_2,\ldots,a_n),
\end{equation}
where ``\,${\mathrm{idem}}(a_1;a_2,\ldots,a_n)$'' after an 
expression stands for the sum of the $n-1$ expressions 
obtained from the preceding expression by interchanging $a_1$ 
with each $a_k$, $k=2,3,\ldots,n$.
\end{defn}
}
\begin{defn} \label{def:2.1}
We adopt the following conventions for succinctly 
writing elements of {sets}. To indicate sequential positive and negative 
elements, we write
\[
\pm a:=\{a,-a\}.
\]
\noindent We also adopt an analogous notation
\[
\expe^{\pm i\theta}:=\{\expe^{i\theta},\expe^{-i\theta}\}.
\]
\noindent In the same vein, consider the numbers $f_s\in\mathbb C$ 
with $s\in{\mathcal S}\subset \N$,
with ${\mathcal S}$ finite.
Then, the notation
$\{f_s\}$
represents the set of all complex numbers $f_s$ such that 
$s\in\SSS$.
Furthermore, consider some $p\in\SSS$, then the notation
$\{f_s\}_{s\ne p}$ represents the sequence of all complex numbers
$f_s$ such that $s\in\SSS\!\setminus\!\{p\}$.
\end{defn}

Consider $q\in\CCdag$, $n\in\mathbb N_0$.
Define the sets 
\begin{eqnarray}
&&\hspace{-7.5cm}\Omega_q^n:=\{q^{-k}: k\in\mathbb N_0,~0\le k\le n-1\},\\
&&\hspace{-7.5cm}\Omega_q:=\Omega_q^\infty=\{q^{-k}:k\in\mathbb N_0\}, \\
&&\hspace{-7.5cm}\Upsilon_q:=\{q^k:k\in\Z\}.
\end{eqnarray}
In order to obtain our derived identities, we rely on properties 
of the {$q$-shifted factorial $(a;q)_n$}. {We refer to 
$(a;q)_n$ as a $q$-shifted factorial (it is also referred to as a $q$-Pochhammer symbol).}
For any $n\in \mathbb N_0$, $a,{b}, q \in \mathbb C$, 
the 
$q$-shifted factorial is defined as
\begin{eqnarray}
&&\hspace{-7.0cm}\label{poch.id:1} 
(a;q)_n:=(1-a)(1-aq)\cdots(1-aq^{n-1}).
\end{eqnarray}
One may also define
\begin{eqnarray}
&&\hspace{-9.6cm}(a;q)_\infty:=\prod_{n=0}^\infty (1-aq^{n}), \label{infPochdefn}
\end{eqnarray}
where $|q|<1$. 
Furthermore, one has the following
identities
\begin{eqnarray}
&&\hspace{-9.3cm}\label{sq-1}
(a^2;q)_\infty=(\pm a,\pm q^\frac12 a;q)_\infty\label{a2q}\\
&&\hspace{-9.3cm}\label{sq-2}
(a^2;q^2)_\infty=(\pm a;q)_\infty.\label{a2q2}
\end{eqnarray}
%Furthermore, define 
%\begin{equation}
%(a;q)_b:=\frac{(a;q)_\infty}{(a q^b;q)_\infty},
%\label{infPochdefn-2}
%\end{equation}
%where $a q^b\not \in \Omega_q$.
One also has
\begin{eqnarray}
&&\hspace{-8.5cm}(q^{-n}a;q)_\infty=(q^{-n}a;q)_n(a;q)_\infty.
\label{infPoch2}
\end{eqnarray}
One also has the definition
of the $q$-gamma function, namely
\cite[(1.9.1)]{Koekoeketal}
\begin{equation}
\Gamma_q(x):=\frac{(q;q)_\infty}
{(1-q)^{x-1}(q^x;q)_\infty},
\label{qgam}
\end{equation}
and also the gamma function $\Gamma:\C\setminus-\N_0\to\C$ defined
in \cite[(5.2.1)]{NIST:DLMF}.
Note that \cite[p.~13]{Koekoeketal}
\begin{equation}
\lim_{q\to1^{-}}\Gamma_q(x)=\Gamma(x).
\label{limqgam}
\end{equation}
We will also use the {following} notational product conventions, $a_k\in\CC$, $k\in\N$, $b\in\C\cup\{\infty\}$,
\begin{eqnarray}
&&\hspace{-7.7cm}(a_1,\ldots,a_k;q)_b:=(a_1;q)_b\cdots(a_k;q)_b,\\
&&\hspace{-7.7cm}\Gamma_q(a_1,\ldots,a_k):=\Gamma_q(a_1)\cdots\Gamma_q(a_k),\\
&&\hspace{-7.7cm}\Gamma(a_1,\ldots,a_k):=\Gamma(a_1)\cdots\Gamma(a_k).
\end{eqnarray}

The basic hypergeometric series, 
which we 
will often use, is defined for
%$z\in\CCast$, 
{$z\in \mathbb C$}, 
%\rcoro{$z$ can be zero}
$q\in\CCdag$, 
$s\in\mathbb N_0$, $r\in\mathbb N_0\cup\{-1\}$,
$b_j\not\in\Omega_q$, $j=1,\ldots,s$, as
\cite[(1.10.1)]{Koekoeketal}
\begin{equation}
\qhyp{r+1}{s}{a_1,\ldots,a_{r+1}}
{b_1,\ldots,b_s}
{q,z}
:=\sum_{k=0}^\infty
\frac{(a_1,\ldots,a_{r+1};q)_k}
{(q,b_1,\ldots,b_s;q)_k}
\left((-1)^kq^{\binom k2}\right)^{s-r}
z^k.
\label{2.11}
\end{equation}
\noindent {For $s>r$, ${}_{r+1}\phi_s$ is an entire
function of $z$, for $s=r$ then 
${}_{r+1}\phi_s$ is convergent for $|z|<1$, and for $s<r$ the series
is divergent {unless it is terminating}.}
Note that when we refer to a basic hypergeometric
function with {\it arbitrary argument} $z$, we simply mean that
the argument does not necessarily depend on the other parameters, namely the $a_j$'s, $b_j$'s nor $q$. However, for the arbitrary argument $z$, it very-well
may be that the domain of the argument
is restricted, such as for $|z|<1$.

\medskip 
We will use the following notation %${}_{r+1}\phi_s^m$
$\nqphyp{r+1}{s}{m}$, $m\in\mathbb Z$
(originally due to van de Bult \& Rains
\cite[p.~4]{vandeBultRains09}), 
for basic hypergeometric series {when some parameter entries are equal to zero}.
Consider $p\in\mathbb N_0$. Then define
\begin{equation}\label{topzero} 
\nlqphyp{r+1}{s}{-p}\!
%{}_{r+1}\phi_s^{-p}
\left(\!\begin{array}{c}a_1,\ldots,a_{r+1} \\
b_1,\ldots,b_s\end{array};q,z
\right)
:=
%\qhyp{r+p+1}{s}{a_1,\ldots,
%a_{r+1},\overbrace{0,...,0}^{p}}
%{b_1,\ldots,b_s}{q,z},
\rphisx{r+p+1}{s}{a_1,a_2,\ldots,a_{r+1},\overbrace{0,\ldots,0}^{p} \\ b_1,b_2,\ldots,b_s}{z},
\end{equation}
\begin{equation}\label{botzero}
\nlqphyp{r+1}{s}{\,p}\!
%{}_{r+1}\phi_s^{\,p}
\left(\!\begin{array}{c}a_1,\ldots,a_{r+1} \\
b_1,\ldots,b_s\end{array};q,z
\right)
:=
%\qhyp{r+1}{s+p}
%{a_1,\ldots,a_{r+1}}
%{b_1,\ldots,b_s,
%\underbrace{0,...,0}_{p}}{q,z},
\rphisx{r+1}{s+p}{a_1,a_2,\ldots,a_{r+1} \\ b_1,b_2,\ldots,b_s, \underbrace{0,\ldots,0}_p}{z},
\end{equation}
where $b_1,\ldots,b_s\not
\in\Omega_q\cup\{0\}$, and
$
\nlqphyp{r+1}{s}{0}
%{}_{r+1}\phi_s^0
%=\rphisx{r+1}{s}{}{}
={}_{r+1}\phi_{s}
.$
The nonterminating basic hypergeometric series %${}_{r+1}\phi_s^m
$\nlqphyp{r+1}{s}{m}
({\bf a};{\bf b};q,z)$, ${\bf a}:=\{a_1,\ldots,a_{r+1}\}$,
${\bf b}:=\{b_1,\ldots,b_s\}$, is well-defined for $s-r+m\ge 0$. In particular 
%${}_{r+1}\phi_s^m$ 
$\nlqphyp{r+1}{s}{m}$
is an entire function of $z$ for $s-r+m>0$, convergent for $|z|<1$ for $s-r+m=0$ and divergent if $s-r+m<0$ {unless it is terminating}.
Note that we will move interchangeably between the
van de Bult \& Rains notation and the alternative
notation with vanishing numerator and denominator parameters
which are used on the right-hand sides of \eqref{topzero} and \eqref{botzero}.

The geometric series is given
by \cite{Andrews98}
\begin{equation}
\sum_{n=0}^\infty z^n=\frac{1}{1-z},
\label{geom}
\end{equation}
provided $|z|<1$.
The $q$-binomial theorem is given by
\cite[(1.11.1)]{Koekoeketal}
\begin{equation}
\qhyp10{a}{-}{q,z}=\frac{(az;q)_\infty}
{(z;q)_\infty},
\label{qbinom}
\end{equation}
provided $|z|<1$ for 
convergence of the left-hand side
nonterminating basic hypergeometric
series.

\subsection{The theta function and the partial theta function}
\label{tfptf}
The {\it theta function} $\vartheta(z;q)$ (sometimes referred to as a modified theta function 
\cite[(11.2.1)]{GaspRah})
is defined
by 
Jacobi's triple product identity and is
given by {\cite[(1.6.1)]{GaspRah}} (see also \cite[(2.3)]{Koornwinder2014})
\begin{equation}
\vartheta(z;q):=
(z,q/z;q)_\infty=\frac{1}{(q;q)_\infty}\sum_{n=-\infty}^\infty (-1)^nq^{\binom{n}{2}}z^n.
\label{tfdef}
\end{equation}
where $z\ne 0$. Note that $\vartheta(q^n;q)=0$ if
$n\in\Z$.
We will adopt the product convention
for theta functions for $a_k\in\C$ for $k\in\N$, namely
\begin{eqnarray}
&&\hspace{-8cm}\vartheta(a_1,\ldots,a_k;q):=\vartheta(a_1;q)\cdots\vartheta(a_k;q).\nonumber
\end{eqnarray}
A particular ratio of theta function satisfies the
following useful identity
\begin{equation}
\frac{(a,q/a;q)_\infty}
{(qa,1/a;q)_\infty}=
{\frac{\vartheta(a;q)}{\vartheta(qa;q)}}=-a,
\label{infneg}
\end{equation}
%A useful property of theta functions which we will
%take advantage of below, is
%described in the following lemma.
%\begin{lem}
%\label{lemz}
%Let $n\in\Z$, $z,f\in\CC$.
%If $f=q^n/z$, then $\vartheta(fz;q)=0$. 
%\end{lem}
%\begin{proof}
%Let $m\in\N_0$. 
%If $fz=q^{-m}$, then since $(q^{-m};q)_\infty=0$ %then if $f=q^{-m}/z$
%then $\vartheta(fz;q)=0$.
%If $q/(fz)=q^{-m}$ then $\vartheta(fz;q)=0$
%if $f=q^{m+1}/z$. Hence if $f=q^{n}/z$
%then $\vartheta(fz;q)=0$. This 
%completes the proof.
%\end{proof}
where $a\ne 0$.

\medskip
\noindent The {\it partial theta function} $\Theta(z;q)$, described as such because it only involves
the partial sum contribution for $n\ge 0$ in
\eqref{tfdef}
as opposed to summing over all integers as in 
the theta function, 
is defined
as follows with alternative representations.
\begin{thm}
Let $q\in\CCdag$, $p\in\N$,
$z\in\CCast$, $|z|<1$. Then
\begin{eqnarray}
&&\hspace{-3.8cm}\Theta(z;q):=\frac{1}{(q;q)_\infty}\sum_{n=0}^\infty (-1)^n q^{\binom{n}{2}}z^n=\frac{1}{(q;q)_\infty}\quhyp101{q}{-}{q,z}
\label{pt1}\\
&&\hspace{-2.45cm}=\frac{1}{(q;q)_\infty}
\qphyp{1}{0}{p}{q^{1/p}}{-}
{q^{1/p},(-1)^{p-1}z}\label{pt5}\\[0.05cm]
&&\hspace{-2.45cm}=(z;q)_\infty\qphyp{0}{1}{-2}{-}{z}{q,q}\label{pt2}\\[0.05cm]
%&&\hspace{-2.95cm}=\frac1{2(q;q)_\infty}\left[1+(q,z,z;q)_\infty\qhyp32{-a,\pm\frac{a}{\sqrt{q}}}{a,a^2}{q,q}\right]\\[-0.05cm]
&&\hspace{-2.45cm}=\frac{(z;q)_\infty}{(q;q)_\infty}\qhyp01{-}{z}{q,qz}\label{pt3}\\[0.05cm]
&&\hspace{-2.45cm}=\frac{(z;q)_\infty}{(\pm q;q)_\infty}\qhyp43{\pm i\sqrt{z},
\pm i\sqrt{qz}}{-q,\pm z}{q,q}.\label{pt4}
\end{eqnarray}
\end{thm}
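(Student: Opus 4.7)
The plan is to reduce each of the five right-hand sides to the defining series $\frac{1}{(q;q)_\infty}\sum_n(-1)^n q^{\binom{n}{2}}z^n$ in the middle of \eqref{pt1}. The identities fall into tiers of difficulty: \eqref{pt1} and \eqref{pt5} amount to unpacking the van de Bult--Rains notation, \eqref{pt2} and \eqref{pt3} are double-sum interchanges based on a $q$-exponential expansion, and \eqref{pt4} is the main obstacle. For the second equality in \eqref{pt1}, I would use \eqref{botzero} with $p=1$ to identify $\quhyp{1}{0}{1}{q}{-}{q,z}$ with ${}_1\phi_1(q;0;q,z)$; applying \eqref{2.11} with $(0;q)_k=1$ and $s-r=1$ collapses this immediately to $\sum_k(-1)^k q^{\binom{k}{2}}z^k$. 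For \eqref{pt5}, the same procedure with $p$ denominator zeros and base $q^{1/p}$ combines the sign/power prefactor $\bigl((-1)^k(q^{1/p})^{\binom{k}{2}}\bigr)^p$ and the argument factor $\bigl((-1)^{p-1}z\bigr)^k$ into $(-1)^{(2p-1)k}q^{\binom{k}{2}}z^k$, which reduces to $(-1)^k q^{\binom{k}{2}}z^k$ since $2p-1$ is odd.

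For \eqref{pt2} and \eqref{pt3}, expand the basic hypergeometric series on the right, multiply by $(z;q)_\infty$ using $(z;q)_\infty/(z;q)_k=(zq^k;q)_\infty$, expand $(zq^k;q)_\infty$ via the $q$-exponential $(u;q)_\infty=\sum_j(-u)^j q^{\binom{j}{2}}/(q;q)_j$, and interchange the order of summation. For \eqref{pt2}, the series reduces to $\sum_k q^k/[(q;q)_k(z;q)_k]$; after the expansion the inner sum over $k$ is $\sum_k q^{k(j+1)}/(q;q)_k=1/(q^{j+1};q)_\infty$, a specialization of \eqref{qbinom}, and combining this with $(q;q)_j(q^{j+1};q)_\infty=(q;q)_\infty$ produces $\Theta(z;q)$ at once. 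For \eqref{pt3}, the series is $\sum_k q^{k^2}z^k/[(q;q)_k(z;q)_k]$ (where $s-r=2$ combines $q^{2\binom{k}{2}}(qz)^k$ into $q^{k^2}z^k$); after the same expansion and the reindexing $n=k+j$, the inner sum becomes the terminating $q$-binomial identity $\sum_{k=0}^n(-1)^k q^{\binom{k+1}{2}}\qbinom{n}{k}=(q;q)_n$, which is the $w=q$ specialization of $(w;q)_n=\sum_k\qbinom{n}{k}(-w)^k q^{\binom{k}{2}}$.

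Equation \eqref{pt4} is the main obstacle. The first step is to exploit the $\pm$-symmetry via $(a,-a;q)_k=(a^2;q^2)_k$ together with $(\pm q;q)_\infty=(q^2;q^2)_\infty$, which collapses the ${}_4\phi_3$ to the base-$q^2$ series ${}_2\phi_1(-z,-zq;z^2;q^2,q)=\sum_k(-z;q)_{2k}q^k/[(q^2,z^2;q^2)_k]$ and simplifies the prefactor to $(z;q)_\infty/(q^2;q^2)_\infty$. Next, I would apply Heine's Euler-type transformation (with base $q^2$) to rewrite this ${}_2\phi_1$ as $\bigl((q^2;q^2)_\infty/(q;q^2)_\infty\bigr)\,{}_2\phi_1(-z,-z/q;z^2;q^2,q^2)$, then use the even--odd factorization $(-z,-z/q;q^2)_n=(-z/q;q)_{2n}$. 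From here, the remaining infinite Pochhammer $(z;q)_\infty=(z,zq;q^2)_\infty$ together with $(z^2;q^2)_n=(z,-z;q)_n$ and a further $q$-exponential expansion should reduce matters to a terminating $q$-binomial cancellation in base $q^2$ analogous to that used for \eqref{pt3}, recovering the defining series. The chief difficulty is the careful bookkeeping between bases $q$ and $q^2$ so that no $q$-shifted factorial is dropped or double-counted; if this becomes unwieldy, a fallback is to verify \eqref{pt4} by matching coefficients of $z^n$, using $(-z;q)_{2k}=\sum_m\qbinom{2k}{m}z^m q^{\binom{m}{2}}$ to extract the coefficient and closing via known finite $q$-binomial sums.
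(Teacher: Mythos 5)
Your treatments of \eqref{pt1}, \eqref{pt5}, \eqref{pt2} and \eqref{pt3} are correct and complete: the first two are exactly the paper's route (unpacking the van de Bult--Rains notation through \eqref{2.11}), and for \eqref{pt2}--\eqref{pt3} you prove directly, via the $q$-exponential expansion and the inner evaluations $\sum_k q^{k(j+1)}/(q;q)_k=1/(q^{j+1};q)_\infty$ and $\sum_{k=0}^n(-1)^kq^{\binom{k+1}{2}}\qbinom{n}{k}=(q;q)_n$, what the paper simply cites from Koekoek--Lesky--Swarttouw (1.13.8--9); the bookkeeping closes as you claim (e.g.\ the exponent $\binom{n-k}{2}+kn-\binom{n}{2}=\binom{k+1}{2}$ is what makes the $w=q$ case of the terminating $q$-binomial theorem appear).

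The gap is in \eqref{pt4}. Your preliminary reductions are valid: $(\pm i\sqrt z;q)_k(\pm i\sqrt{qz};q)_k=(-z;q)_{2k}$, $(q,-q;q)_k=(q^2;q^2)_k$, $(\pm z;q)_k=(z^2;q^2)_k$, $(\pm q;q)_\infty=(q^2;q^2)_\infty$, and the base-$q^2$ Euler--Heine transformation with $abw/c=q^2$ is legitimate. But the argument then stops at ``should reduce matters to a terminating $q$-binomial cancellation analogous to \eqref{pt3}'', and that is precisely the step which is not routine: after your reduction the coefficient of $z^n$ involves $(-z;q)_{2k}$ (or $(-z/q;q)_{2n}$) together with $1/(z^2;q^2)_k=1/[(z;q)_k(-z;q)_k]$, so three $z$-dependent factors must be expanded simultaneously, and the resulting finite sum is not a single application of the terminating $q$-binomial theorem -- in substance it is equivalent to the $(a,b)=(z,-q)$ specialization of Andrews and Warnaar's product formula for partial theta functions, which is exactly the nontrivial external input the paper uses (together with $\Theta(-q;q)=(-q,-q;q)_\infty$) to obtain \eqref{pt4} in one line. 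Your fallback (matching coefficients of $z^n$ and ``closing via known finite $q$-binomial sums'') is likewise asserted rather than carried out. So as written \eqref{pt4} is not proved: you need either to invoke (or prove) the Andrews--Warnaar identity, or to exhibit and verify explicitly the terminating identity that your reduction leads to.
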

\begin{proof}
The representation \eqref{pt1} follows 
the definition of nonterminating
basic hypergeometric series \eqref{2.11}.
The representation \eqref{pt5} follows
from direct substitution using
\eqref{2.11}.
The representations \eqref{pt2}, \eqref{pt3}
follow from \cite[(1.13.8-9)]{Koekoeketal}.
The representation \eqref{pt4} follows
from Andrews \& Warnaar's formula for a product of partial
theta functions \cite[Theorem 1.1]{AndrewsW2007} 
\begin{equation}
\Theta(a;q)\Theta(b;q)=\frac{(a,b;q)_\infty}{(q;q)_\infty}
\qhyp43{\pm\sqrt{ab},\pm\sqrt{\frac{ab}{q}}}{a,b,\frac{ab}{q}}{q,q},
\end{equation}
with the substitutions $(a,b)\mapsto(z,-q)$
and the identity \cite[(20.4.3)]{NIST:DLMF}
\begin{equation}
\Theta(-q;q)=(-q,-q;q)_\infty.
\end{equation}
This completes the proof.
\end{proof}

\subsection{Some theorems involving $q$-Mellin--Barnes integrals}

\medskip
Now we present a result which 
allows one to evaluate integrals 
of products and ratios of
infinite 
$q$-shifted factorials in terms
of sums of non-terminating basic 
hypergeometric functions.
The following result is a special case
($t=1$) of the more general
result which 
appears in \cite[Theorem 2.1]{CohlCostasSantos22}.
Note that we adopt a representation
for the contour integral as in 
\cite[(4.9.3)]{GaspRah}. However,
there are several other alternative integral representations
which can be used (see \cite[\S4.9]{GaspRah}).
\begin{thm} \label{gascard}
Let $q\in\CCdag$, $m\in \mathbb Z$,
$\sigma\in (0,\infty)$,
${\bf a}:=\{a_1,\ldots,a_A\}$, 
${\bf b}:=\{b_1,\ldots,b_B\}$, 
${\bf c}:=\{c_1,\ldots,c_C\}$, 
${\bf d}:=\{d_1,\ldots,d_D\}$ be
sets of {non-zero} complex numbers with cardinality 
$A, B, C, D\in\mathbb N_0$ (not all zero) respectively with
$|c_k|<\sigma$,
$|d_l|<1/\sigma$,
for any 
$a_i, b_j, c_k, d_l\in\CCast$ elements of 
${\bf a}, {\bf b}, {\bf c}, {\bf d}$,
and $z:=\expe^{i\psi}$.
Define the {\it $q$-Mellin--Barnes integral}
\begin{eqnarray}
&&\hspace{-1.7cm}G_{m}:=G_{m}({\bf a},{\bf b},{\bf c},{\bf d};\sigma,q)
:=\frac{(q;q)_\infty}{2\pi}\left(\frac{1}{\sigma}\right)^m\int_{-\pi}^\pi
\frac{({\bf b}\frac{\sigma}{z}, {\bf a}\frac{z}{\sigma};q)_\infty}
{({\bf d}\frac{\sigma}{z}, {\bf c}\frac{z}{\sigma};q)_\infty}
\expe^{im\psi}{\mathrm d}{\psi},
\label{gascardeq}
\end{eqnarray}
such that the integral exists. Then
\begin{equation}
G_{m}({\bf a},{\bf b},{\bf c},{\bf d};\sigma,q)
=G_{-m}({\bf b},{\bf a},{\bf d},{\bf c};\sigma,q),
\label{Gmt0}
\end{equation}
if $|c_k|,|d_l|<\min\{1/\sigma, 
\sigma\}$.
Furthermore, let $d_lc_k\not\in\Omega_q$.
If $D\ge B$, $d_l/d_{l'}\not\in\Omega_q$, $l\ne l'$, then
\begin{eqnarray}
\label{Gmt1}
&&\hspace{-0.6cm}G_{m}\!=\!
\sum_{k=1}^D
\frac{(d_k{\bf a},{\bf b}/d_k;q)_\infty d_k^m}
{(d_k{\bf c},{\bf d}_{[k]}/d_k;q)_\infty}
\nqphyp{B+C}{A+D-1}{C-A}
%{}_{B+C}\phi_{A+D-1}^{C-A}
\left(\begin{array}{c}
d_k{\bf c},q d_k/{\bf b}\\
d_k{\bf a},q d_k/{\bf d}_{[k]}
\end{array}
;q,q^m(qd_k)^{D-B}\frac{b_1\cdots b_B}{d_1\cdots d_D}\right)\!,
\end{eqnarray}
and/or if $C\ge A$, $c_k/c_{k'}\not\in\Omega_q$, $k\ne k'$, then
\begin{eqnarray}
&&\hspace{-0.55cm}G_{m}=
\sum_{k=1}^C\!\frac{(\,c_{k}{\bf b},
{\bf a}/c_{k};q)_\infty c_k^{-m}
}
{(c_k{\bf d},{\bf c}_{[k]}/c_k ;q)_\infty}
\nqphyp{A+D}{{B+C-1}}{{D-B}}
%{}_{A+D}\phi^{D-B}_{B+C-1} 
\!\!\left(\begin{array}{c} c_k{\bf d}, q c_{k}/
{\bf a}\\ c_k{\bf b},q c_k/{\bf c}_{[k]} \end{array}\!\!\!;q,
q^{-m}(q c_k)^{C-A}\frac{a_1\cdots a_A}{c_1\cdots c_C}\right)\!,
\label{Gmt2}
\end{eqnarray}
\noindent where the nonterminating basic hypergeometric series 
in \eqref{Gmt1} 
(resp.~\eqref{Gmt2}) 
is entire if $D>B$ 
(resp.~$C>A$), convergent for
$|q^mb_1\cdots b_B|<|d_1\cdots d_D|$ if $D=B$ 
(resp. $|q^{-m}a_1\cdots a_A|<|c_1\cdots c_C|$ if $C=A$), 
and divergent otherwise.
\end{thm}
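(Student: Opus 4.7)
The plan is to evaluate the $q$-Mellin--Barnes integral by contour integration and residue calculus, following the classical strategy of Slater and Bailey as systematized in \cite[\S4.9--4.10]{GaspRah}. Since the statement is announced as the $t=1$ specialization of \cite[Theorem 2.1]{CohlCostasSantos22}, my proof would either invoke that specialization directly or reproduce its argument at $t=1$; I sketch the latter.

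\textbf{Step 1 (Symmetry \eqref{Gmt0}).} First I would substitute $\psi\mapsto-\psi$ in \eqref{gascardeq}. Writing $z=\expe^{i\psi}$, this is equivalent to $z\mapsto 1/z$, which sends $\expe^{im\psi}\mapsto\expe^{-im\psi}$ and swaps every factor $\sigma/z$ with $z/\sigma$. The $q$-shifted factorials involving ${\bf a}$ and ${\bf b}$ (respectively ${\bf c}$ and ${\bf d}$) therefore exchange roles. The $\sigma^{-m}$ prefactor becomes $\sigma^m$, and after relabeling one obtains \eqref{Gmt0}. The hypothesis $|c_k|,|d_l|<\min\{1/\sigma,\sigma\}$ ensures that the deformed integral still has its poles separated from the unit circle.

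\textbf{Step 2 (Reduction to a contour integral).} Converting $d\psi=dz/(iz)$ turns \eqref{gascardeq} into a contour integral around $|z|=1$. Under the pole-separation hypotheses $|c_k|<\sigma$ and $|d_l|<1/\sigma$, the poles generated by $(d_l\sigma/z;q)_\infty$ in the denominator (at $z=d_l\sigma q^n$, $n\in\N_0$) lie strictly inside $|z|=1$, while those generated by $(c_k z/\sigma;q)_\infty$ (at $z=\sigma q^{-n}/c_k$) lie strictly outside. The extra non-coincidence conditions $d_l c_k\notin\Omega_q$ and $d_l/d_{l'}\notin\Omega_q$ guarantee that all interior poles are simple.

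\textbf{Step 3 (Residues and identification with $\nqphyp{B+C}{A+D-1}{C-A}$).} For \eqref{Gmt1} I would close the contour inward and sum residues over all simple poles $z_{l,n}=d_l\sigma q^n$. Using $\mathop{\mathrm{Res}}_{z=d_l\sigma}\,(d_l\sigma/z;q)_\infty^{-1}=-d_l\sigma/(q;q)_\infty$ (and its $q$-shifted analogues via \eqref{infPoch2}), each fixed $l$ contributes a factor $(d_l{\bf a},{\bf b}/d_l;q)_\infty/((d_l{\bf c},{\bf d}_{[l]}/d_l;q)_\infty)\cdot d_l^m$ times a sum over $n\ge 0$. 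The ratios of $q$-shifted factorials evaluated at $z=d_l\sigma q^n$ collapse, using repeated application of $(xq^n;q)_\infty=(x;q)_\infty/(x;q)_n$ and $(xq^{-n};q)_n=(q/x;q)_n(-x)^n q^{-\binom{n}{2}-n+n}$, into a ratio whose numerator/denominator match \eqref{topzero}--\eqref{botzero}. The combined power of $q^n$ and the factors $(-1)^n q^{\binom n 2}$ from \eqref{poch.id:1} assemble exactly into the argument $q^m(qd_l)^{D-B}b_1\cdots b_B/(d_1\cdots d_D)$, while the shortfall $C-A$ accounts for the upper van de Bult--Rains index. This yields \eqref{Gmt1}.

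\textbf{Step 4 (Formula \eqref{Gmt2} and convergence).} Applying the symmetry \eqref{Gmt0} to \eqref{Gmt1} (with the roles ${\bf a}\leftrightarrow{\bf b}$, ${\bf c}\leftrightarrow{\bf d}$, $m\leftrightarrow-m$ interchanged) produces \eqref{Gmt2}. Finally, the convergence claims follow immediately from the van de Bult--Rains convention recalled in the discussion of \eqref{topzero}--\eqref{botzero}: the series is entire when $s-r+m>0$, convergent for argument of modulus $<1$ when $s-r+m=0$, and divergent otherwise, which corresponds precisely to the dichotomy $D>B$ versus $D=B$ (resp.~$C>A$ versus $C=A$) stated in the theorem.

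\textbf{Main obstacle.} The residue bookkeeping itself is routine, but the delicate point is tracking the precise power of $q$ accumulated from the $(-1)^n q^{\binom n 2}$ factors arising in \eqref{poch.id:1} as the poles $z=d_l\sigma q^n$ pass through numerator and denominator $q$-shifted factorials of unequal cardinality. Getting the exponent $(qd_l)^{D-B}$ and the offset $q^m$ exactly right is where one must be most careful; it is exactly this step that drives the $C-A$ and $D-B$ ``excess index'' in the van de Bult--Rains notation.
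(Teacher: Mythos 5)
Your overall route---convert \eqref{gascardeq} to a contour integral over $|z|=1$ and evaluate it by residues at the poles of the denominator infinite $q$-shifted factorials, in the Bailey--Slater style recapitulated in \cite[Chapter~4]{GaspRah}---is the method the result rests on; the paper itself gives no independent argument, its proof being a citation of \cite[Theorem~2.1]{CohlCostasSantos22} (the $t$-generalization), which is proved this way. So the approach matches, but two steps of your sketch are not yet proofs.

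First, Step 1 as stated is false: under $\psi\mapsto-\psi$ (i.e.\ $z\mapsto 1/z$) the factor $\sigma/z$ becomes $\sigma z$, not $z/\sigma$. To get \eqref{Gmt0} you must compose the reflection with the rescaling $z\mapsto \sigma^2 z$, i.e.\ recognize $G_{-m}({\bf b},{\bf a},{\bf d},{\bf c};\sigma,q)$ as the $G_m({\bf a},{\bf b},{\bf c},{\bf d};\sigma,q)$ integrand taken over the circle $|z|=\sigma^2$, and then deform that circle back to $|z|=1$; the hypothesis $|c_k|,|d_l|<\min\{\sigma,1/\sigma\}$ is exactly what guarantees no poles (at $z=d_l\sigma q^n$ inside, $z=\sigma q^{-n}/c_k$ outside) are crossed in that deformation. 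Your phrase ``deformed integral'' gestures at this, but the substitution you actually invoke does not produce the claimed swap. Second, ``close the contour inward and sum residues'' is not automatic: the poles $z=d_l\sigma q^n$ accumulate at $z=0$, which is an essential singularity of the integrand, so one must show that the integrals over a sequence of shrinking circles threading between the poles tend to zero, equivalently that the residue series converges to the integral. This is precisely where $D\ge B$ (and, when $D=B$, the bound $|q^m b_1\cdots b_B|<|d_1\cdots d_D|$) enters; in your sketch these hypotheses surface only as convergence statements about the final ${}_{B+C}\phi_{A+D-1}$ series via the van de Bult--Rains convention, which does not by itself justify the contour-closing. Minor further corrections: the residue of $1/(d_l\sigma/z;q)_\infty$ at $z=d_l\sigma$ is $+\,d_l\sigma/(q;q)_\infty$ (no minus sign), and the condition $d_lc_k\notin\Omega_q$ is needed so that the factors $(d_k{\bf c};q)_\infty$ appearing in the residues do not vanish, not to make the interior poles simple (that is the role of $d_l/d_{l'}\notin\Omega_q$).
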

\begin{proof}
See proof of \cite[Theorem 2.1]{CohlCostasSantos22}.
\end{proof}

{One can convert the integral
in the above theorem to a form
which is more similar to that which
appears in Mellin--Barnes integrals by
replacing the infinite $q$-shifted
factorials with $q$-gamma functions
using \eqref{qgam}. 
}
{
\begin{cor}
Let $q\in\CCdag$, $m\in \mathbb Z$,
${\bf a}:=\{a_1,\ldots,a_A\}$, 
${\bf b}:=\{b_1,\ldots,b_B\}$, 
${\bf c}:=\{c_1,\ldots,c_C\}$, 
${\bf d}:=\{d_1,\ldots,d_D\}$ be
sets of {non-zero} complex numbers with cardinality 
$A, B, C, D\in\mathbb N_0$ (not all zero) respectively,
$\Sigma a_j:=\sum_{j=1}^A a_j$,
$\Sigma b_j:=\sum_{j=1}^B b_j$,
$\Sigma c_j:=\sum_{j=1}^C c_j$,
$\Sigma d_j:=\sum_{j=1}^D d_j$,
and
$|q|^\sigma\in (0,\infty)$,
$|q^{c_k}|,|q^{d_l}|<
\min\{|q|^{\sigma},|q|^{-\sigma}\}$,
$d_l+c_k\not\in-\N_0$,
for any 
$a_i, b_j, c_k, d_l\in\CCast$ elements of 
${\bf a}, {\bf b}, {\bf c}, {\bf d}$.
Define
\begin{eqnarray}
&&\hspace{-1.4cm}I_m:=I_m({\bf a},{\bf b},{\bf c},{\bf d};\sigma;q)\nonumber\\
&&\hspace{-0.85cm}:=\int_{\frac{\pi}{\log q}}^{-\frac{\pi}{\log q}}
\frac{\Gamma_q({\bf d}+\sigma-ix,{\bf c}-\sigma+ix)}
{\Gamma_q({\bf b}+\sigma-ix,{\bf a}-\sigma+ix)}
q^{imx}(1-q)^{(ix-\sigma)(C-D+B-A)}\,\dd x.
\end{eqnarray}
Then if $D\ge B$, $d_l-d_{l'}\not\in-\N_0$,
$l\ne l'$, one has
\begin{eqnarray}
&&\hspace{0.0cm}I_m=\frac{2\pi(1-q)q^{m\sigma}}
{-\log q}
\sum_{k=1}^D
\frac{\Gamma_q(d_k+{\bf c},{\bf d}_{[k]}-d_k)}
{\Gamma_q(d_k+{\bf a},{\bf b}-d_k)}
q^{md_k}(1-q)^{d_k(C-D+B-A)}\nonumber\\
&&\hspace{2.7cm}\times
\!\qphyp{B+C}{A+D-1}{C-A}{q^{d_k+{\bf c}},q^{1+d_k-{\bf b}}}
{q^{d_k+{\bf a}},q^{1+d_k-{\bf d}_{[k]}}}
{q,q^{m+(D-B)(1+d_k)+\Sigma b_j-\Sigma d_j}},
\end{eqnarray}
and if $C\ge A$, $c_k-c_{k'}\not\in-\N_0$,
$k\ne k'$, one has
\begin{eqnarray}
&&\hspace{0.0cm}I_m=\frac{2\pi(1-q)q^{m\sigma}}
{-\log q}
\sum_{k=1}^C
\frac{\Gamma_q(c_k+{\bf d},{\bf c}_{[k]}-c_k)}
{\Gamma_q(c_k+{\bf b},{\bf a}-c_k)}
q^{-mc_k}(1-q)^{-c_k(C-D+B-A)}\nonumber\\
&&\hspace{2.7cm}\times
\!\qphyp{A+D}{B+C-1}{D-B}{q^{c_k+{\bf d}},q^{1+c_k-{\bf a}}}
{q^{c_k+{\bf b}},q^{1+c_k-{\bf c}_{[k]}}}
{q,q^{-m+(C-A)(1+c_k)+\Sigma a_j-\Sigma c_j}}.
\end{eqnarray}
\end{cor}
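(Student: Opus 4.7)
The approach is to derive this corollary as a direct parameter reparametrization of Theorem~\ref{gascard}. Specifically, I would substitute $\sigma\mapsto q^\sigma$, $a_j\mapsto q^{a_j}$, $b_j\mapsto q^{b_j}$, $c_k\mapsto q^{c_k}$, $d_l\mapsto q^{d_l}$ in~\eqref{gascardeq}, \eqref{Gmt1}, \eqref{Gmt2}, and simultaneously change the integration variable on the unit circle via $z=q^{ix}$, which gives $\psi=x\log q$, $d\psi=\log q\,dx$, and maps $\psi\in[-\pi,\pi]$ to $x\in[-\pi/\log q,\pi/\log q]$, traversed in reversed orientation because $\log q<0$. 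Under this reparametrization each entry $b_j\sigma/z$, $a_jz/\sigma$, $c_kz/\sigma$, $d_l\sigma/z$ becomes $q^t$ with $t$ affine in $x$, so~\eqref{qgam} in the form
\[
(q^t;q)_\infty=\frac{(q;q)_\infty}{(1-q)^{t-1}\Gamma_q(t)}
\]
converts every infinite $q$-shifted factorial in the integrand into a $q$-gamma function, times a power of $(q;q)_\infty$ and a power of $(1-q)$.

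Collecting the $x$-dependent $(1-q)$-exponent yields exactly the factor $(1-q)^{(ix-\sigma)(C-D+B-A)}$ displayed in the corollary, while the remaining $x$-independent contributions from $(1-q)$ and $(q;q)_\infty$ combine with the overall $(q;q)_\infty/(2\pi)$ in~\eqref{gascardeq}, with $(1/\sigma)^m\mapsto q^{-m\sigma}$, and with the Jacobian $-\log q$, to produce the prefactor $2\pi(1-q)q^{m\sigma}/(-\log q)$. The weight $\expe^{im\psi}$ becomes $q^{imx}$, matching the stated integrand. Applying the identical substitutions to~\eqref{Gmt1}, the factor $d_k^m$ becomes $q^{md_k}$, the basic hypergeometric argument $q^m(qd_k)^{D-B}b_1\cdots b_B/(d_1\cdots d_D)$ collapses to $q^{m+(D-B)(1+d_k)+\Sigma b_j-\Sigma d_j}$, the quotients of $q$-shifted factorials transform into the claimed quotients of $\Gamma_q$'s together with the explicit correction $(1-q)^{d_k(C-D+B-A)}$, and the nondegeneracy conditions $d_l/d_{l'}\notin\Omega_q$, $c_kd_l\notin\Omega_q$ become $d_l-d_{l'}\notin-\N_0$, $c_k+d_l\notin-\N_0$, since $q^t\in\Omega_q$ if and only if $t\in-\N_0$. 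The analogous computation starting from~\eqref{Gmt2} produces the second identity.

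The main obstacle is the bookkeeping of the exponents: the integrand of~\eqref{gascardeq} contributes $A+B+C+D$ infinite $q$-shifted factorials, while each summand on the right contributes $A+B+C+D-1$, so the residual $(q;q)_\infty$ and $(1-q)$ contributions from each side must be tallied separately and shown to balance, leaving only the explicit exponents written in the corollary. Once this accounting is carried out, the rest is a mechanical verification.
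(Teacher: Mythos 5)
Your proposal is correct and follows essentially the same route as the paper, whose proof consists precisely of applying the map $({\bf a},{\bf b},{\bf c},{\bf d},\sigma,\expe^{i\psi})\mapsto(q^{\bf a},q^{\bf b},q^{\bf c},q^{\bf d},q^\sigma,q^{ix})$ to \eqref{Gmt1} and \eqref{Gmt2} along the lines of Askey \& Roy and rewriting the infinite $q$-shifted factorials via \eqref{qgam}; you simply make explicit the $(1-q)$ and $(q;q)_\infty$ bookkeeping and the Jacobian that the paper leaves implicit.
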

}
{
\begin{proof}
Using \eqref{Gmt1}, \eqref{Gmt2}, we 
respectively start along the lines
of Askey \& Roy \cite[p.~368]{AskeyRoy86}
and use the map $({\bf a},{\bf b},{\bf c},{\bf d},\sigma,\expe^{i\psi})\mapsto(q^{\bf a},q^{\bf b},q^{\bf c},q^{\bf d},q^\sigma,q^{ix})$.
This completes the proof.
\end{proof}
}
{
Note that 
\begin{equation}
\lim_{q\to1^{-}}\frac{-\log q}{1-q}=1.
\label{qloglim}
\end{equation}
So certainly in the case where all infinite
$q$-shifted factorials are composed
of parameters which do not have leading 
negative factors, we can convert the 
integral in Theorem \ref{gascaru} to 
one which resembles a Mellin--Barnes 
integral in the $q\to 1^{-}$ limit
\eqref{limqgam}. 
It is this reason that we refer to 
these integrals as $q$-Mellin--Barnes integrals.
It is also clear that there are situations where the 
$q\to 1^{-}$ limit either vanishes or
is is perhaps not well-defined. This is 
a technicality may or may not be easily 
addressed.
}

\medskip
{Now consider the situation where
$D=B$ and $C=A$. This produces the following
result.}
{
\begin{cor}
Let $q\in\CCdag$, $m\in \mathbb Z$,
${\bf a}:=\{a_1,\ldots,a_A\}$, 
${\bf b}:=\{b_1,\ldots,b_B\}$, 
${\bf c}:=\{c_1,\ldots,c_A\}$, 
${\bf d}:=\{d_1,\ldots,d_B\}$ be
sets of {non-zero} complex numbers with cardinality 
$A, B, C, D\in\mathbb N_0$ (not all zero) respectively,
$\Sigma a_j:=\sum_{j=1}^A a_j$,
$\Sigma b_j:=\sum_{j=1}^B b_j$,
$\Sigma c_j:=\sum_{j=1}^A c_j$,
$\Sigma d_j:=\sum_{j=1}^B d_j$,
and
$|q|^\sigma\in (0,\infty)$,
$|q^{c_k}|,|q^{d_l}|<
\min\{|q|^{\sigma},|q|^{-\sigma}\}$,
$d_l+c_k\not\in-\N_0$,
for any 
$a_i, b_j, c_k, d_l\in\CCast$ elements of 
${\bf a}, {\bf b}, {\bf c}, {\bf d}$.
%\poro{$\Re(c_k)>\Re(\sigma)$, $\Re(d_l+\sigma)>0$.} 
Then 
\begin{eqnarray}
&&\hspace{-4.0cm}\int_{\frac{\pi}{\log q}}^{-\frac{\pi}{\log q}}
\frac{\Gamma_q({\bf d}+\sigma-ix,{\bf c}-\sigma+ix)}
{\Gamma_q({\bf b}+\sigma-ix,{\bf a}-\sigma+ix)}
q^{imx}\,\dd x=
\frac{2\pi(1-q)q^{m\sigma}}
{-\log q}{\sf A}.
\end{eqnarray}
If $d_l-d_{l'}\not\in-\N_0$, $l\ne l'$ one has
\begin{eqnarray}
&&\hspace{-0.5cm}{\sf A}=
\sum_{k=1}^B
\frac{\Gamma_q(d_k+{\bf c},{\bf d}_{[k]}-d_k)q^{md_k}}
{\Gamma_q(d_k+{\bf a},{\bf b}-d_k)}
%\nonumber\\
%&&\hspace{2.7cm}\times
%\!
\qhyp{A+B}{A+B-1}{q^{d_k+{\bf c}},q^{1+d_k-{\bf b}}}
{q^{d_k+{\bf a}},q^{1+d_k-{\bf d}_{[k]}}}
{q,q^{m+\Sigma b_j-\Sigma d_j}},
\end{eqnarray}
and if $c_k-c_{k'}\not\in-\N_0$, $k\ne k'$ one has
\begin{eqnarray}
&&\hspace{-0.2cm}{\sf A}=
\sum_{k=1}^A
\frac{\Gamma_q(c_k+{\bf d},{\bf c}_{[k]}-c_k)q^{-mc_k}}
{\Gamma_q(c_k+{\bf b},{\bf a}-c_k)}
%\nonumber\\
%&&\hspace{2.7cm}\times
%\!
\qhyp{A+B}{A+B-1}{q^{c_k+{\bf d}},q^{1+c_k-{\bf a}}}
{q^{c_k+{\bf b}},q^{1+c_k-{\bf c}_{[k]}}}
{q,q^{-m+\Sigma a_j-\Sigma c_j}},
\end{eqnarray}
where $|q^{m+\Sigma b_j-\Sigma d_j}|<1$
and $|q^{-m+\Sigma a_j-\Sigma c_j}|$ respectively.
\label{cor17}
\end{cor}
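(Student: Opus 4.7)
The plan is to obtain this corollary as a direct specialization of the preceding corollary to the case $C=A$ and $D=B$, with no new ideas required. First I would observe that under these cardinality conditions $C-D+B-A=0$, so the factor $(1-q)^{(ix-\sigma)(C-D+B-A)}$ inside the integrand collapses to $1$, yielding the left-hand side of the claimed identity exactly. The same exponent also appears as $(1-q)^{d_k(C-D+B-A)}$ and $(1-q)^{-c_k(C-D+B-A)}$ in the two series on the right-hand side of the preceding corollary, and both of these factors disappear as well.

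Next I would track the effect of $D-B=0$ and $C-A=0$ on the nonterminating series. The van de Bult--Rains upper index in $\nqphyp{B+C}{A+D-1}{C-A}$ becomes $0$, so by the convention $\nlqphyp{r+1}{s}{0}={}_{r+1}\phi_s$ stated immediately after \eqref{botzero}, the series reduces to the ordinary $\qhyp{A+B}{A+B-1}{\cdot}{\cdot}{q,\cdot}$ displayed in the statement. The same collapse occurs for the second series since $D-B=0$. Within the arguments of these series, the terms $(D-B)(1+d_k)$ and $(C-A)(1+c_k)$ vanish, so the exponents $q^{m+(D-B)(1+d_k)+\Sigma b_j-\Sigma d_j}$ and $q^{-m+(C-A)(1+c_k)+\Sigma a_j-\Sigma c_j}$ simplify to exactly the $q^{m+\Sigma b_j-\Sigma d_j}$ and $q^{-m+\Sigma a_j-\Sigma c_j}$ appearing in the conclusion.

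For the convergence statement, I would invoke the last line of Theorem~\ref{gascard}: when $D=B$ the series $\nqphyp{B+C}{A+D-1}{C-A}$ in \eqref{Gmt1} is convergent precisely for $|q^m b_1\cdots b_B|<|d_1\cdots d_D|$, which under the substitution $b_j\mapsto q^{b_j}$, $d_j\mapsto q^{d_j}$ from the proof of the preceding corollary transforms into $|q^{m+\Sigma b_j-\Sigma d_j}|<1$, and analogously $|q^{-m+\Sigma a_j-\Sigma c_j}|<1$ arises from the $C=A$ branch. There is no substantive obstacle here; the entire proof is a bookkeeping exercise in which the symmetric cardinality choice $D-B=C-A=0$ eliminates the auxiliary $(1-q)$ factors and reduces the van de Bult--Rains series to standard basic hypergeometric series.
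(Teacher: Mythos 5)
Your proposal is correct and is essentially the paper's own argument: the paper obtains Corollary \ref{cor17} precisely by setting $C=A$, $D=B$ in the preceding corollary, so that the $(1-q)^{(\cdot)(C-D+B-A)}$ factors become unity, the van de Bult--Rains index $C-A$ (resp.\ $D-B$) collapses the series to an ordinary ${}_{A+B}\phi_{A+B-1}$, and the arguments reduce to $q^{m+\Sigma b_j-\Sigma d_j}$ and $q^{-m+\Sigma a_j-\Sigma c_j}$, with the convergence conditions inherited from Theorem \ref{gascard}. Your bookkeeping, including the derivation of $|q^{m+\Sigma b_j-\Sigma d_j}|<1$ from the $D=B$ convergence criterion under the substitution $b_j\mapsto q^{b_j}$, $d_j\mapsto q^{d_j}$, matches the intended specialization exactly.
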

}

\medskip
{We now take the limit as $q\to 1^{-}$ and
obtain the following result.}
{
\begin{cor}
Let ${\bf a}:=\{a_1,\ldots,a_A\}$,
${\bf b}:=\{b_1,\ldots,b_B\}$,
${\bf c}:=\{c_1,\ldots,c_A\}$,
${\bf d}:=\{d_1,\ldots,d_B\}$,
$A, B\in\mathbb N_0$ (not both zero) respectively,
for any 
$a_i, b_j, c_k, d_l\in\CCast$ elements of 
${\bf a}, {\bf b}, {\bf c}, {\bf d}$,
$\Sigma a_j:=\sum_{j=1}^A a_j$,
$\Sigma b_j:=\sum_{j=1}^B b_j$,
$\Sigma c_j:=\sum_{j=1}^A c_j$,
$\Sigma d_j:=\sum_{j=1}^B d_j$,
$\sigma\in(0,\infty)$.
%\poro{$\Re c_k>\sigma$, 
%$\Re d_l+\sigma>0$}.
Define
\begin{eqnarray}
&&\hspace{-2.8cm}
{\sf B}:={\sf B}({\bf a},{\bf b},{\bf c},{\bf d}):=\frac{1}{2\pi}
\int_{-\infty}^{\infty}
\frac{\Gamma({\bf d}+\sigma-ix)\Gamma({\bf c}-\sigma+ix)}
{\Gamma({\bf b}+\sigma-ix)\Gamma({\bf a}-\sigma+ix)}
\,\dd x.
\end{eqnarray}
Then
\begin{eqnarray}
&&\hspace{-1.5cm}{\sf B}=
\sum_{k=1}^B
\frac{\Gamma(d_k+{\bf c},{\bf d}_{[k]}-d_k)}
{\Gamma(d_k+{\bf a},{\bf b}-d_k)}
%\nonumber\\
%&&\hspace{2.7cm}\times
%\!
\hyp{A+B}{A+B-1}{{d_k+{\bf c}},{1+d_k-{\bf b}}}
{{d_k+{\bf a}},{1+d_k-{\bf d}_{[k]}}}
{1}\\
&&\hspace{-1.0cm}=
\sum_{k=1}^A
\frac{\Gamma(c_k+{\bf d},{\bf c}_{[k]}-c_k)}
{\Gamma(c_k+{\bf b},{\bf a}-c_k)}
%\nonumber\\
%&&\hspace{2.7cm}\times
%\!
\hyp{A+B}{A+B-1}{{c_k+{\bf d}},{1+c_k-{\bf a}}}
{{c_k+{\bf b}},{1+c_k-{\bf c}_{[k]}}}
{1},
\end{eqnarray}
where $\Re(
\Sigma a_j+\Sigma b_j-\Sigma c_j-\Sigma d_j-1)>0$,
so that the generalized hypergeometric
series are convergent.
\end{cor}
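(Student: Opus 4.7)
The plan is to deduce this corollary directly from Corollary~\ref{cor17} by setting $m=0$ and then letting $q\to 1^{-}$. Setting $m=0$ removes the factors $q^{imx}$, $q^{m\sigma}$, $q^{md_k}$ and $q^{-mc_k}$, so Corollary~\ref{cor17} reduces to an identity equating the $q$-Mellin--Barnes integral
\begin{equation*}
\int_{\pi/\log q}^{-\pi/\log q}\!\frac{\Gamma_q({\bf d}+\sigma-ix,{\bf c}-\sigma+ix)}{\Gamma_q({\bf b}+\sigma-ix,{\bf a}-\sigma+ix)}\,\dd x
\end{equation*}
with $\tfrac{2\pi(1-q)}{-\log q}{\sf A}|_{m=0}$. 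Dividing both sides by $2\pi$ puts the left-hand side in precisely the form of ${\sf B}$ with $\Gamma$ replaced by $\Gamma_q$, and the right-hand side as the prefactor $\tfrac{1-q}{-\log q}$ times a sum of two $q$-hypergeometric evaluations in which the base argument is $q^{\Sigma b_j-\Sigma d_j}$ (respectively $q^{\Sigma a_j-\Sigma c_j}$).

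Next, I would pass to the limit $q\to 1^{-}$ termwise. Since $\log q\to 0^{-}$, the contour endpoints $\pi/\log q$ and $-\pi/\log q$ tend to $-\infty$ and $+\infty$ respectively, so the contour expands to the real line appearing in the definition of ${\sf B}$. The three limiting ingredients then combine: $\Gamma_q\to\Gamma$ pointwise by \eqref{limqgam}, $\tfrac{1-q}{-\log q}\to 1$ by \eqref{qloglim}, and the standard $q\to 1^{-}$ identification of a ${}_{A+B}\phi_{A+B-1}$ whose parameter entries are $q^{\alpha_j}$, $q^{\beta_j}$ with the classical ${}_{A+B}F_{A+B-1}$ having parameters $\alpha_j$, $\beta_j$ and unchanged argument. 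Here the argument $q^{\Sigma b_j-\Sigma d_j}$ (respectively $q^{\Sigma a_j-\Sigma c_j}$) tends to $1$, producing the claimed generalized hypergeometric series at unit argument.

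The main obstacle is rigorously justifying the two exchanges of limit with summation and with integration. For the series, one uses the Gauss criterion that a ${}_pF_{p-1}(1)$ converges absolutely precisely when $\Re\bigl(\sum b_j-\sum a_j\bigr)>0$; a direct parameter count (using the sets ${\bf d}_{[k]}$ or ${\bf c}_{[k]}$, where the $d_k$ or $c_k$ contributions cancel between numerator and denominator) shows that this quantity equals $\Re\bigl(\Sigma a_j+\Sigma b_j-\Sigma c_j-\Sigma d_j-1\bigr)$ in both evaluations, matching the hypothesis. For the integral one invokes dominated convergence, using the classical Stirling-type decay of $|\Gamma(\alpha+ix)/\Gamma(\beta+ix)|$ along vertical lines (the equal number of numerator and denominator $\Gamma$-factors enforces integrability), together with standard estimates showing this bound dominates the $q$-analogue uniformly in $q$ near $1^{-}$. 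Once both exchanges are validated, the two evaluations in Corollary~\ref{cor17} pass directly to the two claimed evaluations of ${\sf B}$, completing the argument.
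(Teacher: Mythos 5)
Your proposal is correct and follows essentially the same route as the paper: the paper's proof is simply to start from Corollary \ref{cor17} and let $q\to 1^{-}$ using \eqref{qloglim} (together with \eqref{limqgam}), exactly as you do, with the $m$-dependent factors playing no role in the limit. Your additional care about interchanging the limit with the sum and the integral, and the Gauss-criterion count giving $\Re(\Sigma a_j+\Sigma b_j-\Sigma c_j-\Sigma d_j-1)>0$, only supplies justification that the paper leaves implicit.
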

}
{
\begin{proof}
Starting with Corollary \ref{cor17} and taking
the limit $q\to 1^{-}$ using 
\eqref{qloglim} completes the proof.
\end{proof}
}

If one can write a basic hypergeometric
function with a specific argument as a symmetric sum of two nonterminating basic hypergeometric functions with argument $q$, then
there is the following useful consequence of Theorem \ref{gascard}.

\begin{thm}\label{gascaru}
Let {$q\in\CCdag$,} ${\bf a}:=\{a_1,\ldots,a_A\}$, 
%${\bf b}:=\{b_1,\ldots,b_B\}$, 
${\bf c}:=\{c_1,\ldots,c_C\}$,
be sets of {non-zero} complex numbers with cardinality 
$A,C\in\mathbb N_0$ (not {both} zero) respectively, 
${\bf d}:=\{d_1,d_2\}$, 
$c_kd_l\not\in\Omega_q$,
$z=\expe^{i\psi}$,
%$\sigma\in(0,1)$. 
$\sigma\in(0,\infty)$, $d_1, d_2\in\CCast$,
%$f\in \CCast\setminus\Omega_q$, 
such that
$|c_k|<\sigma$, $|d_1|,|d_2|<1/\sigma$, 
%|a_i|<\min\{1/|d_1|,1/|d_2|\}$, %$c_j>\max\{|d_1|, |d_2|\}$ 
for any 
$c_k\in {\bf c}$.
Define
\begin{eqnarray}
&&
\hspace{-0.7cm}H({\bf a},{\bf c},{\bf d};q):=\II{d_1{;}d_2}
\frac{(d_1{\bf a};q)_\infty}
{\left(\frac{d_2}{d_1},d_1{\bf c};q\right)_{\!\infty}}
\!\qphyp{C}{A+1}{C-A-2}
{d_1{\bf c}}
{d_1{\bf a},q d_1/d_2}
{q,q}\\
&&\hspace{-0.6cm}
=\!
\frac{(d_1{\bf a};q)_\infty}
{\left(\frac{d_2}{d_1},d_1{\bf c};q\right)_{\!\infty}}
\!\qphyp{C}{A+1}{C-A-2}
{d_1{\bf c}}
{d_1{\bf a},q d_1/d_2}
{q,q}\!+\!
\frac{(d_2{\bf a};q)_\infty}
{\left(\frac{d_1}{d_2},d_2{\bf c};q\right)_{\!\infty}}
\!\qphyp{C}{A+1}{C-A-2}
{d_2{\bf c}}
{d_2{\bf a},q d_2/d_1}
{q,q}\!,\nonumber\\
\label{Hdefn}
\end{eqnarray}
where $d_l/d_{l'}\not\in\Omega_q$, $l\ne l'$, and if $C\ge A+2$,
\begin{eqnarray}
&&\hspace{-1.7cm}J({\bf a},{\bf c},{\bf d};f,q):=
\sum_{k=1}^C
\frac{
{\vartheta(fc_kd_1,\frac{f}{c_kd_2};q)}
%(fc_kd_1,\frac{q}{fc_kd_1},
%\frac{f}{c_kd_2},\frac{q}{f}c_kd_2,
({\bf a}/c_k
;q)_\infty}
{(c_k{\bf d},
{\bf c}_{[k]}/c_k;q)_\infty}\nonumber\\
&&\hspace{3.9cm}\times\qhyp{A+2}{C-1}
{c_k{\bf d},
qc_k/{\bf a}}{qc_k/{\bf c}_{[k]}}
{q,\frac{q(qc_k)^{C-A-2}a_1\cdots a_A}
{d_1d_2c_1\cdots c_C}},
\label{Jdefn}
\end{eqnarray}
where $c_k/c_{k'}\not\in\Omega_q$, $k\ne k'$, 
and ${}_{A+2}\phi_{C-1}$ is convergent
for $C=A+2$ if 
$|qa_1\cdots a_A|<|d_1d_2c_1\cdots c_C|$,
and is an entire function if $C>A+2$.
Then
\begin{eqnarray}
&&\hspace{-1.6cm}\int_{-\pi}^\pi
\frac{((fd_1,\frac{q}{f}d_2)
\frac{\sigma}{z}, 
(\frac{f}{d_2},\frac{q}{fd_1},{\bf a})\frac{z}{\sigma};q)_\infty}
{\left((d_1,d_2)\frac{\sigma}{z}, {\bf c}\frac{z}{\sigma};q\right)_\infty}
{\mathrm d}{\psi}\label{qqform}
%\\&&\hspace{1cm}
=\frac{2\pi
{\vartheta(f,f\frac{d_1}{d_2};q)}
%(f,\frac{q}{f},f\frac{d_1}{d_2},\frac{q}{f}
%\frac{d_2}{d_1};q)_\infty
}{(q;q)_\infty}
H({\bf a},{\bf c},{\bf d};q)
%\label{hhform}
\\
&&\hspace{4.96cm}=\frac{2\pi}{(q;q)_\infty}J({\bf a},{\bf c},
{\bf d};f,q),\quad (C\ge A+2),
\label{jjform}
\end{eqnarray}
{and none of the arguments of the modified theta functions are equal to some $q^m$, $m\in{\mathbb Z}$.}
\end{thm}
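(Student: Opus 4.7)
The plan is to recognize the integral on the left of \eqref{qqform} as a $q$-Mellin--Barnes integral of the type treated by Theorem \ref{gascard} with $m=0$, and then evaluate it in two different ways using \eqref{Gmt1} and \eqref{Gmt2}. Writing $\tilde{\bf b}:=\{fd_1,(q/f)d_2\}$, $\tilde{\bf a}:=\{f/d_2,q/(fd_1)\}\cup{\bf a}$, $\tilde{\bf d}:=\{d_1,d_2\}$, and $\tilde{\bf c}:={\bf c}$, one has $\bar B=\bar D=2$, $\bar A=A+2$, $\bar C=C$, so the left-hand side of \eqref{qqform} equals $(2\pi/(q;q)_\infty)\,G_0(\tilde{\bf a},\tilde{\bf b},\tilde{\bf c},\tilde{\bf d};\sigma,q)$. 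The whole proof then reduces to computing this single $G_0$ in two distinct ways.

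For the first equality, I would apply \eqref{Gmt1} and write out the sum over $k=1,2$. The $k=1$ prefactor $(d_1\tilde{\bf a},\tilde{\bf b}/d_1;q)_\infty/(d_1\tilde{\bf c},\tilde{\bf d}_{[1]}/d_1;q)_\infty$ contains in its numerator the four distinguished factors $(f,q/f,fd_1/d_2,qd_2/(fd_1);q)_\infty$, which pair up via $\vartheta(z;q)=(z,q/z;q)_\infty$ into $\vartheta(f;q)\vartheta(fd_1/d_2;q)$; the remainder is $(d_1{\bf a};q)_\infty/(d_1{\bf c},d_2/d_1;q)_\infty$. The accompanying ${}_{2+C}\phi_{A+3}$ has its two numerator parameters $q/f$ and $fd_1/d_2$ duplicated in its denominator; these cancel pairwise to leave a ${}_{C}\phi_{A+1}$ with argument $\tilde b_1\tilde b_2/(d_1d_2)=q$. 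The $k=2$ term is the image under $d_1\leftrightarrow d_2$; the pairing $(fd_1/d_2,qd_2/(fd_1);q)_\infty=\vartheta(fd_1/d_2;q)$ reappears with the roles of the two sources reversed, so the common factor $\vartheta(f,fd_1/d_2;q)$ can be pulled outside the sum, and what remains is exactly the symmetric sum $H({\bf a},{\bf c},{\bf d};q)$ of \eqref{Hdefn}.

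For the second equality I would apply \eqref{Gmt2} under the hypothesis $\bar C\ge\bar A$, i.e., $C\ge A+2$. The $k$-th prefactor now contains $(fc_kd_1,qc_kd_2/f,f/(c_kd_2),q/(fc_kd_1);q)_\infty$ in its numerator, and the pairs $\{fc_kd_1,q/(fc_kd_1)\}$ and $\{f/(c_kd_2),qc_kd_2/f\}$ combine into $\vartheta(fc_kd_1,f/(c_kd_2);q)$, which now sits inside the sum over $k$ rather than as an overall factor. In the accompanying ${}_{A+4}\phi_{C+1}$, the two numerator parameters $c_kfd_1$ and $qc_kd_2/f$ appear verbatim among the denominator parameters and cancel, reducing the series to ${}_{A+2}\phi_{C-1}$ with argument $q(qc_k)^{C-A-2}a_1\cdots a_A/(d_1d_2c_1\cdots c_C)$; the result then matches \eqref{Jdefn} term by term.

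The main obstacle is purely bookkeeping: one must identify exactly which pairs of infinite $q$-shifted factorials collapse into modified theta functions and verify the two parameter cancellations inside each hypergeometric series. The extra numerator factors $\{fd_1,(q/f)d_2\}$ and $\{f/d_2,q/(fd_1)\}$ deliberately inserted into the integrand of \eqref{qqform} are chosen precisely so that these collapses occur in both evaluations, and moreover so that the argument of each phi simplifies to $q$ or to $q(qc_k)^{C-A-2}a_1\cdots a_A/(d_1d_2c_1\cdots c_C)$ as required. Beyond this, the only analytic input is Theorem \ref{gascard} itself; the convergence of the ${}_{A+2}\phi_{C-1}$ stated in \eqref{Jdefn} is inherited from the convergence clause of \eqref{Gmt2} (convergent when $\bar C=\bar A$ iff the stated modulus inequality holds, entire when $\bar C>\bar A$), and the standing assumption that no modified theta function argument lies in $\Upsilon_q$ ensures that the theta prefactors $\vartheta(f,fd_1/d_2;q)$ and $\vartheta(fc_kd_1,f/(c_kd_2);q)$ are nonzero, so that all divisions and factorisations performed are legitimate.
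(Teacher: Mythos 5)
Your proposal is correct and is essentially the paper's own route: the paper's proof is only a citation to the companion reference, but the theorem is introduced precisely as a consequence of Theorem \ref{gascard}, and your argument — writing the integral as $\frac{2\pi}{(q;q)_\infty}G_0$ with the augmented sets $\tilde{\bf a}=\{f/d_2,q/(fd_1)\}\cup{\bf a}$, $\tilde{\bf b}=\{fd_1,qd_2/f\}$, $\tilde{\bf c}={\bf c}$, $\tilde{\bf d}={\bf d}$, then evaluating once by \eqref{Gmt1} (the paired factors collapsing to $\vartheta(f,fd_1/d_2;q)$ and the series reducing to argument $q$, giving $H$) and once by \eqref{Gmt2} for $C\ge A+2$ (the theta factors $\vartheta(fc_kd_1,f/(c_kd_2);q)$ staying inside the sum and the series reducing to the ${}_{A+2}\phi_{C-1}$ of \eqref{Jdefn}) — is exactly that derivation, with the convergence and non-vanishing conditions inherited as you state.
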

\begin{proof}
See proof of \cite[Theorem 2.4]{CohlCostasSantos22}.
\end{proof}

{
\begin{thm}
Let {$q\in\CCdag$,} ${\bf a}:=\{a_1,\ldots,a_A\}$, 
${\bf c}:=\{c_1,\ldots,c_C\}$,
be sets of {non-zero} complex numbers with cardinality 
$A,C\in\mathbb N_0$ (not {both} zero) respectively, 
${\bf d}:=\{d_1,d_2\}$, 
%$q^{c_k+d_l}\not\in\Omega_q$,
$c_k+d_l\not\in-\N_0$,
%$z=\expe^{i\psi}$,
$|q|^\sigma\in(0,\infty)$, 
$q^{d_1}, q^{d_2}\in\CCast$,
such that
$|q^{c_k}|<|q|^{\sigma}$, $|q^{d_1}|,|q^{d_2}|<|q|^{-\sigma}$, 
for any 
$c_k\in {\bf c}$,
and fractional powers take their principal values.
%\poro{[HSC: I don't think these constraints are correct.]}
Then
\begin{eqnarray}
&&\hspace{-1.3cm}\int_{\frac{\pi}{\log q}}^{-\frac{\pi}{\log q}}
\frac{(1\!-\!q)^{(C-A-2)(ix-\sigma)}\Gamma_q({\bf d}\!+\!\sigma\!-\!ix,
{\bf c}\!-\!\sigma\!+\!ix)}
{\Gamma_q((d_1\!+\!f,d_2\!+\!1\!-\!f)\!+\!\sigma\!-\!ix,(f\!-\!d_2,1\!-\!d_1\!-\!f,{\bf a})\!-\!\sigma\!+\!ix)}
\,\dd x\nonumber\\
&&\hspace{-0.7cm}=\frac{2\pi(1\!-\!q)}
{-\log(q)\Gamma_q(f,1\!-\!f,d_1\!-\!d_2\!+\!f,d_2\!-\!d_1\!+\!1\!-\!f)}\nonumber\\
&&\hspace{-0.0cm}\times
\II{d_1;d_2}
%\sum_{k=1}^D
(1\!-\!q)^{d_1(C-A-2)}\frac{\Gamma_q({\bf c}\!+\!d_1,d_2\!-\!d_1)}
{\Gamma_q({\bf a}\!+\!d_1)}
\qphyp{C}{A+1}
{C-A-2}
{q^{{\bf c}+d_1}}
{q^{{\bf a}+d_1},
q^{1+d_1-d_2}}{q,q}\\
&&\hspace{-0.7cm}=\frac{2\pi(1\!-\!q)}{-\log q}
%\\
%&&\times
\sum_{k=1}^C
\frac{(1\!-\!q)^{-c_k(C-A-2)}\Gamma_q(c_k\!+\!{\bf d},{\bf c}_{[k]}\!-\!c_k)}
{\Gamma_q({\bf a}\!-\!c_k,
1\!-\!c_k\!-\!d_1\!-\!f,1\!+\!c_k\!+\!d_2\!-\!f,c_k\!+\!d_1\!+\!f,-c_k\!-\!d_2\!+\!f)}\nonumber\\
&&\hspace{0.5cm}\times
\qhyp{A+2}{C-1}{q^{c_k+{\bf d}},q^{1-{\bf a}+c_k}}
{q^{1+c_k-{\bf c}_{[k]}}}{q,
q^{1+(C-A-2)(1+c_k)+\Sigma a_j-\Sigma c_j-d_1-d_2}}.
\end{eqnarray}
\end{thm}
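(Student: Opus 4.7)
The plan is to transport the entire identity in Theorem~\ref{gascaru} to $q$-gamma function form by the same Askey--Roy style substitution used in the Corollary just above, namely $(a_i,c_i,d_i,f,\sigma,\expe^{i\psi})\mapsto(q^{a_i},q^{c_i},q^{d_i},q^f,q^\sigma,q^{ix})$. Under this map $\psi=x\log q$, so $\dd\psi=\log q\,\dd x$ and the interval $\psi\in(-\pi,\pi)$ transforms into $x\in(\pi/\log q,-\pi/\log q)$ (recall $\log q<0$), matching the limits written in the statement. Each infinite $q$-shifted factorial is then rewritten via \eqref{qgam} in the form $(q^{y};q)_\infty=(q;q)_\infty/[(1-q)^{y-1}\Gamma_q(y)]$, and each theta function via $\vartheta(q^y;q)=(q^y,q^{1-y};q)_\infty=(q;q)_\infty^2(1-q)/[\Gamma_q(y)\Gamma_q(1-y)]$.

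The integrand of \eqref{qqform} carries $4+A$ infinite $q$-shifted factorials in its numerator and $2+C$ in its denominator, so after applying the conversion one picks up a global prefactor of $(q;q)_\infty^{A-C+2}$ and a total power of $(1-q)$ whose $x$-dependent part collapses to $(1-q)^{(A-C+2)(\sigma-ix)}=(1-q)^{(C-A-2)(ix-\sigma)}$, which is precisely the explicit factor visible in the stated integrand. The remaining constant-in-$x$ portion of the $(1-q)$ accounting, together with the factor $\log q$ from the change of variable and the surviving powers of $(q;q)_\infty$, will be absorbed by the corresponding conversions of the two right-hand sides.

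For the $H$-form identity, the prefactor $2\pi\vartheta(f,fd_1/d_2;q)/(q;q)_\infty$ in \eqref{qqform} becomes, by the theta-to-$\Gamma_q$ identity above, exactly $2\pi(1-q)\big/\{-\log(q)\,\Gamma_q(f,1-f,d_1-d_2+f,d_2-d_1+1-f)\}$. The three infinite $q$-shifted factorial ratios inside the $\II{d_1;d_2}$ sum defining $H$ translate directly into $\Gamma_q(d_2-d_1,{\bf c}+d_1)/\Gamma_q({\bf a}+d_1)$ times the residual factor $(1-q)^{d_1(C-A-2)}$ displayed in the statement, while the inner basic hypergeometric series has its parameter entries exponentiated to give the stated $\qphyp{C}{A+1}{C-A-2}{q^{{\bf c}+d_1}}{q^{{\bf a}+d_1},q^{1+d_1-d_2}}{q,q}$. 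The $\II{d_1;d_2}$ symmetrization is simply inherited from $H$.

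For the $J$-form identity, the same substitution applied to \eqref{Jdefn} converts each of the three factors $\vartheta(fc_kd_1,f/(c_kd_2);q)$, $({\bf a}/c_k;q)_\infty$ and $(c_k{\bf d},{\bf c}_{[k]}/c_k;q)_\infty$ into its $\Gamma_q$ counterpart, producing the five denominator $\Gamma_q$ factors in the summand $\Gamma_q({\bf a}-c_k,1-c_k-d_1-f,1+c_k+d_2-f,c_k+d_1+f,-c_k-d_2+f)$, the residual $(1-q)^{-c_k(C-A-2)}$, and the stated $\qhyp{A+2}{C-1}{\cdot}{\cdot}{q,\cdot}$ whose argument is $q^{1+(C-A-2)(1+c_k)+\Sigma a_j-\Sigma c_j-d_1-d_2}$. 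I expect the only real obstacle to be the clerical bookkeeping: tracking the cancellation of the $(q;q)_\infty$ powers (they must collectively leave only the overall $2\pi(1-q)/(-\log q)$ prefactor) and of the constant-in-$x$ contributions to $(1-q)$ across all factors. Once this accounting is verified, the identity reduces line-by-line to Theorem~\ref{gascaru}, and no new convergence or analytic-continuation issue arises beyond those already built into Theorem~\ref{gascaru} and into the definition of $\Gamma_q$.
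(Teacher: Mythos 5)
Your proposal is correct and follows essentially the same route as the paper's own proof, which simply applies the Askey--Roy style substitution $({\bf a},{\bf c},{\bf d},\sigma,\expe^{i\psi})\mapsto(q^{\bf a},q^{\bf c},q^{\bf d},q^{\sigma},q^{ix})$ to \eqref{qqform} and \eqref{jjform} and rewrites everything via the $q$-gamma function \eqref{qgam}. Your additional bookkeeping of the $(1-q)$ and $(q;q)_\infty$ powers (in particular the $x$-dependent exponent collapsing to $(C-A-2)(ix-\sigma)$) is exactly the verification the paper leaves implicit.
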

}
{
\begin{proof}
Using \eqref{qqform}, \eqref{jjform}, we 
respectively start along the lines
of Askey \& Roy \cite[p.~368]{AskeyRoy86}
and use the map $({\bf a},{\bf c},{\bf d},\sigma,\expe^{i\psi})\mapsto(q^{\bf a},q^{\bf c},q^{\bf d},q^\sigma,q^{ix})$
and the definition of the $q$-gamma 
function \eqref{qgam}.
This completes the proof.
\end{proof}
}

{By assuming that $C=A+2$, then the problematic $(1-q)^{C-A-2}$ terms become unity. This produces the following result.}

{
\begin{thm}
Let {$q\in\CCdag$,} ${\bf a}:=\{a_1,\ldots,a_A\}$, 
${\bf c}:=\{c_1,\ldots,c_{A+2}\}$,
be sets of {non-zero} complex numbers with cardinality 
$A\in\mathbb N_0$, 
${\bf d}:=\{d_1,d_2\}$, 
%$q^{c_k+d_l}\not\in\Omega_q$,
$c_k+d_l\not\in-\N_0$,
$|q|^\sigma\in(0,\infty)$, $d_1, d_2\in\CCast$,
such that
$|q^{c_k}|<|q|^{\sigma}$, $|q^{d_1}|,|q^{d_2}|<|q|^{-\sigma}$, 
for any 
$c_k\in {\bf c}$,
and fractional powers take their principal values.
%\poro{[HSC: I don't think these constraints are correct.]}
Then
\begin{eqnarray}
&&\hspace{-0.8cm}\int_{\frac{\pi}{\log q}}^{-\frac{\pi}{\log q}}
\frac{
%(1\!-\!q)^{(C-A-2)(ix-\sigma)}
\Gamma_q({\bf d}\!+\!\sigma\!-\!ix,
{\bf c}\!-\!\sigma\!+\!ix)}
{\Gamma_q((d_1\!+\!f,d_2\!+\!1\!-\!f)\!+\!\sigma\!-\!ix,(f\!-\!d_2,1\!-\!d_1\!-\!f,{\bf a})\!-\!\sigma\!+\!ix)}
\,\dd x\nonumber\\
&&\hspace{-0.2cm}=\frac{2\pi(1\!-\!q)}
{-\log(q)\Gamma_q(f,1\!-\!f,d_1\!-\!d_2\!+\!f,d_2\!-\!d_1\!+\!1\!-\!f)}\nonumber\\
&&\hspace{3.0cm}\times
\II{d_1;d_2}
%\sum_{k=1}^D
%(1\!-\!q)^{d_1(C-A-2)}
\frac{\Gamma_q({\bf c}\!+\!d_1,d_2\!-\!d_1)}
{\Gamma_q({\bf a}\!+\!d_1)}
\qhyp{A+2}{A+1}
%{C-A-2}
{q^{{\bf c}+d_1}}
{q^{{\bf a}+d_1},
q^{1+d_1-d_2}}{q,q}\\
&&\hspace{-0.2cm}=\frac{2\pi(1\!-\!q)}{-\log q}
%\\
%&&\times
\sum_{k=1}^{C}
\frac{
%(1\!-\!q)^{-c_k(C-A-2)}
\Gamma_q(c_k\!+\!{\bf d},{\bf c}_{[k]}\!-\!c_k)}
{\Gamma_q({\bf a}\!-\!c_k,
1\!-\!c_k\!-\!d_1\!-\!f,1\!+\!c_k\!+\!d_2\!-\!f,c_k\!+\!d_1\!+\!f,-c_k\!-\!d_2\!+\!f)}\nonumber\\
&&\hspace{4.5cm}\times
\qhyp{A+2}{A+1}{q^{c_k+{\bf d}},q^{1-{\bf a}+c_k}}
{q^{1+c_k-{\bf c}_{[k]}}}{q,
q^{1
%+(C-A-2)(1+c_k)
+\Sigma a_j-\Sigma c_j-d_1-d_2}}.
\end{eqnarray}
\label{thm111}
\end{thm}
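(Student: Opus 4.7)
The plan is straightforward: \textbf{Theorem \ref{thm111} is a direct specialization of the previous theorem to the case $C = A+2$}, so no new technical work is required beyond verifying that the claimed cancellations occur uniformly across the three places where $(1-q)^{C-A-2}$ appears.

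First, I would set $C = A+2$ inside the integrand of the preceding theorem. The factor $(1-q)^{(C-A-2)(ix-\sigma)}$ becomes $(1-q)^{0} = 1$ and disappears, so the integrand collapses to the pure ratio of $q$-gamma functions displayed in Theorem \ref{thm111}. Because the constraint $|q^{c_k}| < |q|^\sigma$, $|q^{d_l}| < |q|^{-\sigma}$, and $c_k + d_l \not\in -\mathbb{N}_0$ carry over verbatim, the integral is well-defined under the same hypotheses.

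Next I would verify the right-hand sides term by term. For the symmetric-sum representation, the summand in the previous theorem carries a factor $(1-q)^{d_1(C-A-2)}$ which becomes $1$; the accompanying ${}_C\phi_{A+1}^{C-A-2}$ reduces to a standard ${}_{A+2}\phi_{A+1}$ (since the superscript $m=0$ in the van de Bult--Rains notation), and its arguments and base $q^{d_1+{\bf c}}$, $q^{{\bf a}+d_1}$, $q^{1+d_1-d_2}$, argument $q$, are unchanged. For the sum over $k=1,\ldots,C$, the weight $(1-q)^{-c_k(C-A-2)}$ collapses to $1$, and the argument of the inner ${}_{A+2}\phi_{C-1} = {}_{A+2}\phi_{A+1}$, which was $q^{1+(C-A-2)(1+c_k)+\Sigma a_j-\Sigma c_j-d_1-d_2}$, simplifies to $q^{1+\Sigma a_j-\Sigma c_j-d_1-d_2}$. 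The convergence condition inherited from the $J$-series with $C = A+2$ (recall from Theorem \ref{gascaru} that this is precisely the boundary case requiring $|qa_1\cdots a_A| < |d_1 d_2 c_1\cdots c_C|$ in the multiplicative parameters) translates to the stated exponent, so the resulting ${}_{A+2}\phi_{A+1}$ is convergent.

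There is no genuine obstacle here: the argument is a clean specialization, and the only thing to check is that each of the three occurrences of $(1-q)^{C-A-2}$ indeed becomes $1$ and that the other data (the $q$-gamma arguments, the hypergeometric arguments, and the convergence/parameter conditions) are preserved under the substitution. I would close with a one-line statement that the result follows by setting $C = A+2$ in the preceding theorem.
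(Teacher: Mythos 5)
Your proposal is correct and matches the paper's own argument: the paper obtains Theorem \ref{thm111} precisely by setting $C=A+2$ in the preceding theorem so that every occurrence of $(1-q)^{C-A-2}$ becomes unity, exactly as you describe. No further commentary is needed.
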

}

{
We now take the limit as $q\to 1^{-}$ to
obtain the following result.}

{
\begin{thm}
Let ${\bf a}:=\{a_1,\ldots,a_A\}$, 
${\bf c}:=\{c_1,\ldots,c_{A+2}\}$,
be sets of {non-zero} complex numbers with cardinality 
$A\in\mathbb N_0$, 
${\bf d}:=\{d_1,d_2\}$, 
$c_k+d_l\not\in-\N_0$,
%$z=\expe^{i\psi}$,
$\sigma\in(0,\infty)$, $d_1, d_2\in\CCast$,
such that
$|c_k|<\sigma$, $|d_1|,|d_2|<1/\sigma$, 
for any 
$c_k\in {\bf c}$,
$\Re(\Sigma a_j-\Sigma c_j+1-d_1-d_2)>0$.
Then
\begin{eqnarray}
&&\hspace{-0.8cm}\int_{-\infty}^{\infty}
\frac{
\Gamma({\bf d}\!+\!\sigma\!-\!ix,
{\bf c}\!-\!\sigma\!+\!ix)\,\dd x}
{\Gamma((d_1\!+\!f,d_2\!+\!1\!-\!f)\!+\!\sigma\!-\!ix,(f\!-\!d_2,1\!-\!d_1\!-\!f,{\bf a})\!-\!\sigma\!+\!ix)}
\nonumber\\
&&\hspace{-0.7cm}=\frac{2\pi}
{\Gamma(f,1\!-\!f,d_1\!-\!d_2\!+\!f,d_2\!-\!d_2\!+\!1\!-\!f)}
%\nonumber\\
%&&\hspace{-0.0cm}\times
\!\!\!\!
\II{d_1;d_2}
\!\!\!\!
\frac{\Gamma({\bf c}\!+\!d_1,d_2\!-\!d_1)}
{\Gamma({\bf a}\!+\!d_1)}
\!\hyp{A+2}{A+1}
{{\bf c}\!+\!d_1}
{{\bf a}\!+\!d_1,
1\!+\!d_1\!-\!d_2}{1}
\label{112a}\\
&&\hspace{-0.7cm}=2\pi
\sum_{k=1}^{C}
\frac{
\Gamma(c_k\!+\!{\bf d},{\bf c}_{[k]}\!-\!c_k)}
{\Gamma({\bf a}\!-\!c_k,
1\!-\!c_k\!-\!d_1\!-\!f,1\!+\!c_k\!+\!d_2\!-\!f,c_k\!+\!d_1\!+\!f,-c_k\!-\!d_2\!+\!f)}\nonumber\\
&&\hspace{8.2cm}\times
\hyp{A+2}{A+1}{
{c_k\!+\!{\bf d}},
{1\!-\!{\bf a}\!+\!c_k}}
{{1\!+\!c_k\!-\!{\bf c}_{[k]}}}{1}.
%+\Sigma a_j-\Sigma c_j-d_1-d_2}}.
\label{112b}
\end{eqnarray}
\end{thm}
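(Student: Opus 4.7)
The plan is to obtain this identity as the $q\to 1^{-}$ limit of Theorem \ref{thm111}. Concretely, I would begin by observing that because $q\in\CCdag$, we have $\log q<0$, so as $q\to 1^{-}$ the endpoints $\pm\pi/\log q$ of the contour in Theorem \ref{thm111} tend to $\mp\infty$; hence the $q$-Mellin--Barnes integral in Theorem \ref{thm111} formally becomes the classical Mellin--Barnes integral appearing on the left-hand side of the theorem to be proved.

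Next I would apply the limits term by term. For the prefactor I use \eqref{qloglim} to rewrite $2\pi(1-q)/(-\log q)\to 2\pi$. For every $q$-gamma function appearing in the integrand and in the residue sum I invoke \eqref{limqgam}. For each ${}_{A+2}\phi_{A+1}$ appearing on the right-hand side of Theorem \ref{thm111}, its numerator entries are of the form $q^{\alpha_j}$ and its denominator entries are of the form $q^{\beta_j}$, with argument $q$ in the first sum and $q^{1+\Sigma a_j-\Sigma c_j-d_1-d_2}$ in the second. Using the standard limit
\[
\lim_{q\to 1^{-}}\qhyp{r+1}{s}{q^{\alpha_1},\ldots,q^{\alpha_{r+1}}}{q^{\beta_1},\ldots,q^{\beta_s}}{q,q^\gamma}
=\hyp{r+1}{s}{\alpha_1,\ldots,\alpha_{r+1}}{\beta_1,\ldots,\beta_s}{1},
\]
when the resulting generalized hypergeometric series at unit argument is convergent, one obtains the ${}_{A+2}F_{A+1}(\cdots;1)$ on the right-hand sides of \eqref{112a} and \eqref{112b}. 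The Gauss-type convergence condition for a ${}_{A+2}F_{A+1}$ at unit argument is exactly $\Re(\text{denominator sum}-\text{numerator sum})>0$; a short computation with either residue shows this reduces to $\Re(\Sigma a_j-\Sigma c_j+1-d_1-d_2)>0$, which is the hypothesis.

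The main obstacle, and essentially the only nontrivial step, is the justification of the interchange of limit and integration on the left-hand side. I would handle this by verifying that the integrand has a uniform $q$-dependent dominating function on compact subsets of $\R$ by using the product representation of $\Gamma_q$ in \eqref{qgam}, and that its tails decay at the classical Stirling rate (exponentially in $|x|$) uniformly for $q$ in a left-neighbourhood of $1$; standard Stirling-type estimates for $\Gamma_q$ (see e.g.\ \cite[\S 1.9]{Koekoeketal}) and the condition $\Re(\Sigma a_j-\Sigma c_j+1-d_1-d_2)>0$ give enough decay along the imaginary axis for dominated convergence to apply. Once the limit of the integral is identified with the integral of the limit, the two equalities \eqref{112a} and \eqref{112b} are immediate from the two corresponding equalities in Theorem \ref{thm111}, completing the proof.
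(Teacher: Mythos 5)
Your proposal follows essentially the same route as the paper: the result is obtained by letting $q\to 1^{-}$ in Theorem \ref{thm111}, using \eqref{qloglim} and \eqref{limqgam} together with the convergence condition for the resulting ${}_{A+2}F_{A+1}$ at unit argument (\cite[(16.2.2)]{NIST:DLMF}). Your additional discussion of dominating the integrand to justify interchanging the limit with the integral is more care than the paper itself supplies, but it does not change the method.
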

}
{
\begin{proof}
Start with Theorem \ref{thm111} and letting $q\to 1^{-}$ 
produces 
\eqref{112a}, \eqref{112b}.
The convergence of the generalized hypergeometric functions are unity is given by \cite[(16.2.2)]{NIST:DLMF}.
This completes the proof.
\end{proof}
}

\begin{rem}
As just indicated, it is often feasible to convert integrals of products of infinite $q$-shifted factorials to integrals of products of $q$-gamma functions. This makes a direct $q$-analogue with Mellin--Barnes integrals for the integrals in question. In some cases we have undertaken this recasting for the integrals which appear below. For instance in Corollary \ref{cor42} we recast the integral of a well-poised ${}_3\phi_2$ in terms of an integral of products of terms given by $\Gamma_q$ and $\Gamma_{q^2}$. In Theorem \ref{thm61}, we are able to write the integral of a very-well poised ${}_8W_7$ as an integral of products of terms given by $\Gamma_q$, and in this case a clear $q\to1^{-}$ limit exists and is computed. Other cases such as Theorems \ref{thm51}, \ref{thm81}, \ref{thm83} and \ref{thm94} can also be written as products of terms involving $\Gamma_q$ and $\Gamma_{q^2}$, and for Theorem \ref{thm91} it can be recast similarly as above but also including terms of the form $\Gamma_{q^3}$. However, we will leave these recastings to the reader.
\end{rem}

\section{A $q$-Mellin--Barnes integral for a ratio of theta functions}

If one would like to integrate
a ratio of an arbitrary product
of theta functions as a $q$-Mellin--Barnes integral then Theorem \ref{gascard} 
provides a powerful tool to evaluate
this integral which provides
insight into the connection
between theta functions and partial theta functions. This will
be seen in the following theorem.

\begin{thm}
Let $q\in\CCdag$, $z=\expe^{i\psi}$,
$\sigma\in(0,\infty)$,
${\bf b}\in\CCast^B$, ${\bf d}\in\CCast^D$
{with $D\ge B$}
such that $|q|/\sigma<|d_k|<1/\sigma$
%$\max(|d_k|\sigma)<1$, 
for $k=1, ..., D$, and $d_l/d_k\not\in\Upsilon_q$ for 
any $d_l, d_k\in {\bf d}$ with $l\ne k$.
%$k\in\{1,\ldots,D\}$ and
Then
\begin{eqnarray}
&&\hspace{-4.2cm}\int_{-\pi}^\pi \frac{\vartheta({\bf b}\frac{\sigma}{z};q)}
{\vartheta({\bf d}\frac{\sigma}{z};q)}
\expe^{im\psi}{\mathrm d}\psi=
\frac{2\pi\sigma^m}{(q;q)_\infty}
G_{m}\left(\frac{q}{\bf b},{\bf b},
\frac{q}{\bf d},{\bf d};\sigma,q\right).
\label{ptint1}
\end{eqnarray}
If $D\ge B$ then 
\vspace{-0.2cm}
\begin{eqnarray}
&&\hspace{-0.2cm}G_{m}=\frac{1}{(q;q)_\infty}
\sum_{k=1}^D\frac{\vartheta({\bf b}/d_k;q)d_k^m}
{%(q;q)_\infty
\vartheta({\bf d}_{[k]}/d_k;q)}
%\\[-0.15cm]
%&&\hspace{3.7cm}\times
\qphyp10{D-B}{q}{-}{q,q^m(qd_k)^{D-B}\frac{b_1\cdots b_B}{d_1\cdots d_D}},
\label{ptint2}
\end{eqnarray}
which is an entire function and for $D=B$, there is a specialized sum
using the geometric series
\eqref{geom}, provided 
$|b_1\cdots b_B|<|d_1\cdots d_D|$
\vspace{-0.2cm}
\begin{eqnarray}
&&\hspace{-4.4cm}G_{m}=\frac{1}{(q;q)_\infty}
\sum_{k=1}^D\frac{\vartheta({\bf b}/d_k;q)d_k^m}
{%(q;q)_\infty
\vartheta({\bf d}_{[k]}/d_k;q)}
\frac{1}{1-q^m\frac{b_1\cdots b_B}{d_1\cdots d_B}}.
\label{ptint3}
\end{eqnarray}
Moreover, for $D>B$, one also has
\begin{eqnarray}
&&\hspace{0.95cm}G_m=\frac{(q^{D-B};q^{D-B})_\infty}{(q;q)_\infty}
%\nonumber\\
%&&\hspace{2.5cm}
%\times
\sum_{k=1}^D\frac{\vartheta({\bf b}/d_k;q)d_k^m}
{%(q;q)_\infty
\vartheta({\bf d}_{[k]}/d_k;q)}
%\\[-0.15cm]
%&&\hspace{3.7cm}\times
\Theta\left(-\frac{q^m(-qd_k)^{D-B}b_1\cdots b_B}{d_1\cdots d_D};q^{D-B}\right)\!.
%\qphyp10{D-B}{q}{-}{q,}
\label{ptint2b}
\end{eqnarray}
\end{thm}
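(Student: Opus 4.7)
The plan is to derive all four displayed formulas from Theorem \ref{gascard} by recognizing the integrand as a specialization of the $q$-Mellin--Barnes kernel in \eqref{gascardeq}. First I would unfold the theta functions through the Jacobi triple product \eqref{tfdef}, writing $\vartheta(b_k\sigma/z;q)=(b_k\sigma/z,(q/b_k)z/\sigma;q)_\infty$ and similarly for each $\vartheta(d_l\sigma/z;q)$. Comparing the resulting ratio of infinite $q$-shifted factorials with \eqref{gascardeq} and making the identification ${\bf a}\mapsto q/{\bf b}$, ${\bf c}\mapsto q/{\bf d}$ (leaving ${\bf b}$ and ${\bf d}$ unchanged) yields \eqref{ptint1} immediately.

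For \eqref{ptint2} I would apply \eqref{Gmt1} to $G_m(q/{\bf b},{\bf b},q/{\bf d},{\bf d};\sigma,q)$. The prefactor $(d_k{\bf a},{\bf b}/d_k;q)_\infty=\prod_j(qd_k/b_j,b_j/d_k;q)_\infty$ folds back into $\vartheta({\bf b}/d_k;q)$ by \eqref{tfdef}, while $(d_k{\bf c},{\bf d}_{[k]}/d_k;q)_\infty$ contains the diagonal factor $qd_k/d_k=q$, which splits off as $(q;q)_\infty$ and leaves $\vartheta({\bf d}_{[k]}/d_k;q)$. Inside the basic hypergeometric factor, the upper parameters $d_k{\bf c}\cup qd_k/{\bf b}$ and the lower parameters $d_k{\bf a}\cup qd_k/{\bf d}_{[k]}$ coincide except that the upper set contains the extra entry $q$ (again from $qd_k/d_k$); after all cancellations the series collapses to $\qphyp{1}{0}{D-B}{q}{-}{q,z'}$ with $z':=q^m(qd_k)^{D-B}b_1\cdots b_B/(d_1\cdots d_D)$, producing \eqref{ptint2}.

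The two remaining formulas come from evaluating this collapsed series in the two regimes. When $D=B$ it is $\qhyp10{q}{-}{q,z'}$, which by the $q$-binomial theorem \eqref{qbinom} (or directly by \eqref{geom}) equals $1/(1-z')$, yielding \eqref{ptint3} under the stated condition $|b_1\cdots b_B|<|d_1\cdots d_D|$. When $D>B$, expanding the series via \eqref{2.11} gives $\sum_{k\ge 0}\bigl((-1)^k q^{\binom{k}{2}}\bigr)^{D-B}(z')^k$; regrouping $(q^{\binom{k}{2}})^{D-B}=(q^{D-B})^{\binom{k}{2}}$ and absorbing the sign identifies this sum with $(q^{D-B};q^{D-B})_\infty\,\Theta\bigl((-1)^{D-B-1}z';q^{D-B}\bigr)$ via \eqref{pt1}, and the elementary check $(-1)^{D-B-1}z'=-q^m(-qd_k)^{D-B}b_1\cdots b_B/(d_1\cdots d_D)$ delivers \eqref{ptint2b}. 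The main bookkeeping care lies in the parameter cancellation that collapses the basic hypergeometric factor and in tracking the sign in the final partial-theta identification; beyond that, the argument is a direct unwinding of Theorem \ref{gascard}.
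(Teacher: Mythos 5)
Your proposal is correct and follows essentially the same route as the paper: rewrite the theta functions via the triple product \eqref{tfdef}, identify ${\bf a}=q/{\bf b}$, ${\bf c}=q/{\bf d}$ in Theorem \ref{gascard}, apply \eqref{Gmt1} to get \eqref{ptint2}, then use the geometric series for $D=B$ and the identification $\qphyp10{D-B}{q}{-}{q,z'}=(q^{D-B};q^{D-B})_\infty\,\Theta((-1)^{D-B-1}z';q^{D-B})$ for $D>B$. Your version simply spells out the parameter cancellations and the sign bookkeeping more explicitly than the paper does.
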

\begin{proof}
Start with the integral on the left-hand side of \eqref{ptint1} and replace the theta function with its
definition \eqref{tfdef} in terms
of infinite $q$-shifted factorials.
Then we can easily identify
the sets
${\bf a}=q/{\bf b}$,
${\bf c}=q/{\bf d}.$
Direct substitution of these sets using Theorem \ref{gascard} provides
\eqref{ptint1}.
The utilization of \eqref{Gmt1} 
with these sets 
provides \eqref{ptint2}.
Due to the symmetric nature
of the arguments
\eqref{Gmt2} yields the same
expression in terms of nonterminating
basic hypergeometric functions.
The function which appears in the representation
of the $q$-Mellin--Barnes integral of a ratio of
theta functions
\begin{eqnarray*}
&&\hspace{-4.9cm}g_p(z;q):=\qphyp{1}{0}{D-B}{q}{-}{q,z}=\sum_{n=0}^\infty
\left((-1)^n q^{\binom{n}{2}}\right)^{D-B} z^n
\end{eqnarray*}
is connected to the partial theta function (see \S\ref{tfptf}).
%Partial theta functions were studied
%extensively by Ramanujan, Andrews, Berndt, Alladi, etc.
%The partial theta function is connected to partition theoretic 
%problems and has some interesting properties. 
The necessary relation is given by
\begin{equation}
g_p(z;q)=(q^{D-B};q^{D-B})_\infty \Theta((-1)^{D-B-1}z;q^{D-B}).
\end{equation}
Inserting this relation in 
\eqref{ptint2} yields 
\eqref{ptint2b}.
If $D=B$ then the nonterminating
basic hypergeometric series 
can be evaluated using the geometric
series \eqref{geom}.
This provides the form of
\eqref{ptint3} which 
completes the proof.
\end{proof}

\section{Symmetric representation of the Askey--Wilson moments}

\medskip
Define the Askey--Wilson weight function
\begin{equation}
w_q(x;{\bf a}|q):=
\frac{(\expe^{\pm 2i\theta};q)_\infty}
{(a\expe^{\pm i\theta},b\expe^{\pm i\theta},c\expe^{\pm i\theta},d\expe^{\pm i\theta};q)_\infty}
=\frac{\left(\pm\expe^{\pm i\theta},\pm q^\frac12\expe^{\pm i\theta};q\right)_\infty}
{\left(a\expe^{\pm i\theta},b\expe^{\pm i\theta},c\expe^{\pm i\theta},d\expe^{\pm i\theta};q\right)_\infty}
,\label{AWweight}
\end{equation}
where {${\bf a}:=\{a,b,c,d\}$}, $x=\cos\theta\in[-1,1]$, 
$a, b, c, d\not \in \Omega_q$
and we have used the identity \eqref{sq-1}.
%$(a^2;q)_\infty=(\pm a,\pm q^\frac12a;q)_\infty$.
Then, the moments of the Askey--Wilson polynomials
are given by
\begin{eqnarray}
&&\hspace{-6.2cm}\mu_n=\frac{(q,ab,\ldots,cd;q)_\infty}{4\pi(abcd;q)_\infty}
\int_{-\pi}^\pi
w_q(x;{\bf a}|q)\cos^n\theta\,
\dd\theta,
\label{moments}
\end{eqnarray}
where $\{ab,\ldots,cd\}:=\{ab,ac,ad,bc,bd,cd\}$, and 
$\mu_n$ has been normalized so that $\mu_0=1$
(see \cite[p.~170]{KimStanton2014}).

\medskip
Due to the $z=\expe^{i\theta}$ dependence
of the second equality of
the Askey--Wilson weight function
\eqref{AWweight} and
a judicious use of the binomial theorem,
the moments of the Askey--Wilson polynomials
\eqref{moments} are given by a 
$q$-Mellin--Barnes integral. Using the method of 
integral representations for nonterminating basic hypergeometric
functions (see Theorem \ref{gascard}) we are able to obtain a form symmetric
in the parameters $a$, $b$, $c$, $d$ for
the moments 
of the Askey--Wilson polynomials.

\begin{thm}
Let $n\in\N_0$, $q\in\CCdag$, $a,b,c,d\in\CCast$,
$a,b,c,d\not\in\Omega_q$. Then, the moments of
the Askey--Wilson polynomials can be given by
\begin{eqnarray}
&&\hspace{-0.3cm}\mu_n\!=\!
\frac{(ab,\ldots,cd;q)_\infty}
{2^{n+1}(abcd;q)_\infty}
\sum_{k=0}^n
\binom{n}{k}
%\frac{n!}{k!(n-k)!}
\nonumber\\&&\hspace{2.5cm}\times
%\!\!\!\!\!
\II{a;b,c,d}
%\!\!\!\!\!
\frac{(\frac{1}{a^2};q)_\infty a^{|n-2k|}}
{(ab,ac,ad,\frac{b}{a},\frac{c}{a},\frac{d}{a};q)_\infty}
%\\&&\hspace{5cm}\times
\Whyp{6}{5}{a^2}{ab,ac,ad}{q,\frac{q^{1+|n-2k|}}{abcd}}\!,
\label{AWmomen}
\end{eqnarray}
where $a/b,a/c,a/d,b/c,b/d,c/d\not\in\Upsilon_q$.
\end{thm}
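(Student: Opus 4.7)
The plan is to expand $\cos^n\theta$ via the binomial theorem and reduce each resulting integral to a $q$-Mellin--Barnes integral $G_m$ of Theorem \ref{gascard}. Writing $z=\expe^{i\theta}$ gives $\cos^n\theta=2^{-n}\sum_{k=0}^n\binom{n}{k}z^{n-2k}$, and applying \eqref{sq-1} in the form $(\expe^{\pm 2i\theta};q)_\infty=(\pm\expe^{\pm i\theta},\pm q^{1/2}\expe^{\pm i\theta};q)_\infty$ turns the integrand of \eqref{moments} into precisely the integrand of \eqref{gascardeq} with $\sigma=1$ and the sets ${\bf a}={\bf b}=\{\pm 1,\pm q^{1/2}\}$, ${\bf c}={\bf d}=\{a,b,c,d\}$. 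Each angular integral then equals $(2\pi/(q;q)_\infty)\,G_{n-2k}$, and tracking constants recovers the prefactor $(ab,\ldots,cd;q)_\infty/(2^{n+1}(abcd;q)_\infty)$ appearing in \eqref{AWmomen}.

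Next I would apply \eqref{Gmt1} of Theorem \ref{gascard} to $G_{n-2k}$ with $A=B=C=D=4$ and $b_1b_2b_3b_4=1\cdot(-1)\cdot q^{1/2}\cdot(-q^{1/2})=q$. The outer sum over $d_k\in\{a,b,c,d\}$ is precisely the symmetric summation $\II{a;b,c,d}$. For the representative term $d_k=a$, applying \eqref{sq-1} once more gives $(\pm a,\pm q^{1/2}a;q)_\infty=(a^2;q)_\infty$ and $(\pm 1/a,\pm q^{1/2}/a;q)_\infty=(1/a^2;q)_\infty$; the resulting $(a^2;q)_\infty$ factors in numerator and denominator cancel, leaving the prefactor $(1/a^2;q)_\infty\,a^{n-2k}/(ab,ac,ad,b/a,c/a,d/a;q)_\infty$. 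The nonterminating ${}_8\phi_7$ produced by \eqref{Gmt1} then has upper parameters $\{a^2,ab,ac,ad,\pm qa,\pm q^{1/2}a\}$ and lower parameters $\{\pm a,\pm q^{1/2}a,qa/b,qa/c,qa/d\}$; it is very-well-poised in $a^2$ (the slots $\pm a,\pm qa$ sit in the right places), and the two pairs $\pm q^{1/2}a$ cancel between numerator and denominator, collapsing the series to ${}_6W_5(a^2;ab,ac,ad;q,q^{n-2k+1}/(abcd))$.

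It remains to produce the absolute value $|n-2k|$ in the final formula. Because ${\bf a}={\bf b}$ and ${\bf c}={\bf d}$, the symmetry \eqref{Gmt0} gives $G_m=G_{-m}$; so whenever $n-2k<0$ I invoke this symmetry and evaluate $G_{|n-2k|}$ rather than $G_{n-2k}$ via \eqref{Gmt1}, which upgrades $a^{n-2k}$ to $a^{|n-2k|}$ and the series argument $q^{n-2k+1}/(abcd)$ to $q^{|n-2k|+1}/(abcd)$. This unifies the two sign cases and yields \eqref{AWmomen}. The main technical point is verifying the ${}_8\phi_7\to{}_6W_5$ collapse through the $\pm q^{1/2}a$ cancellation and confirming the very-well-poised structure in $a^2$; everything else is prefactor bookkeeping.
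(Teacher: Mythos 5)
Your proposal is correct and follows essentially the same route as the paper's proof: binomial expansion of $\cos^n\theta$, identification of each angular integral with $G_{n-2k}$ of Theorem \ref{gascard} using $\sigma=1$, ${\bf a}={\bf b}=\{\pm 1,\pm q^{1/2}\}$, ${\bf c}={\bf d}=\{a,b,c,d\}$, and evaluation via \eqref{Gmt1}. The paper leaves the ${}_8\phi_7\to{}_6W_5$ collapse, the $(a^2;q)_\infty$ cancellations, and the $|n-2k|$ symmetry implicit, whereas you spell them out (using \eqref{Gmt0} for the sign of $m$), but there is no substantive difference in method.
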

\begin{proof}
Start with the left-hand side 
of \eqref{AWmomen} and take 
account of \eqref{moments}.
%Then 
Applying the binomial 
theorem \cite[(1.2.2)]{NIST:DLMF} to the $\cos^n\theta$
produces
\begin{equation}
\cos^n\theta=\frac{1}{2^n}\sum_{k=0}^n
\binom{n}{k}\expe^{i\theta(n-2k)}.
\end{equation}
Applying Theorem \ref{gascard}
with $m=n-2k\in\N_0$ and
$(A,B,C,D)=(4,4,4,4)$, given by
\begin{eqnarray}
&&\hspace{-6.7cm}{\bf a}={\bf b}:=\left\{\pm 1,\pm\sqrt{q}\right\},\,
%\\&&\hspace{-5cm}
{\bf c}={\bf d}:=\left\{a,b,c,d\right\},\,
\label{acdmom}
\end{eqnarray}
produces the right-hand side of \eqref{AWmomen}
by substituting the variables in \eqref{gascardeq}.
This completes the proof.
\end{proof}

\begin{rem}
In Kim \& Stanton (2014) \cite[Theorem 2.10]{KimStanton2014},
a representation for the Askey--Wilson moments which are
symmetric in the parameters $a,b,c,d$ is given. 
Let $t\in\C$. Then 
\begin{eqnarray}
&&\hspace{-0.3cm}\mu_n=\sum_{k=0}^n (-q)^k\frac{(ta,tb,tc,td;q)_k}{(t^2,abcd;q)_k}
\Whyp87{\frac{t^2}{q}}{q^{-k},\frac{t}{a},\frac{t}{b},\frac{t}{c},\frac{t}{d}}{q,q^kabcd}\nonumber\\
&&\hspace{0.5cm}\times\sum_{s=0}^{n+1}\left(\!\binom{n}{s}\!-\!\binom{n}{s-1}\!\right)
\sum_{p=0}^{n-2s-k}
\qbinom{k\!+\!p}{k}
\qbinom{n\!-\!2s\!-\!p}{k}q^{k(2s+p-n)+\binom{k}{2}}t^{2p+2s-n},
\label{KimStan}
\end{eqnarray}
where \cite[(1.9.4)]{Koekoeketal}
\begin{equation}
\qbinom{n}{k}:=\frac{(q;q)_n}{(q;q)_k(q;q)_{n-k}}.
\end{equation}
It is interesting to note that both \eqref{KimStan} and 
\eqref{AWmomen} are symmetric in the parameters $a$, $b$, $c$,
$d$. Define ${\bf 4}:=\{1,2,3,4\}$. The representation \eqref{KimStan} is a finite sum over a lattice given by 
\begin{equation}
\{0,\ldots,n\}\times\{0,\ldots,n+1\}\times\{0,\ldots,n-2s-k\},
\end{equation} 
for each terminating very-well-poised ${}_8W_7$. On the other hand
\eqref{AWmomen} is a finite sum over a rectangular lattice 
given by
%\begin{equation}
$\{0,\ldots,n\}\times{\bf 4}$
%\end{equation}
for each 
nonterminating very-well-poised ${}_6W_5$.
\end{rem}

%\noindent \moro{[HSC: Compare with other forms of the AW moments?]}
%%%%%%%%%%%%%%%%%%%%%%%%%%%%%%%%%%%%%%%%%%%%%%%%%%%%%%
\section{Four and five-term transformations for a nonterminating well-poised ${}_3\phi_2$}
%%%%%%%%%%%%%%%%%%%%%%%%%%%%%%%%%%%%%%%%%%%%%%%%%%%%%%
Let $q,z\in\CCdag$, {$\tau\in(0,\infty)$,} $a,b,c,h\in\CCast$, 
$\frac{qa}{b},\frac{qa}{c}\not\in\Omega_q$,
$h,h\frac{qa}{bcz}\not\in\Upsilon_q$,
$w=\expe^{i\eta}$.
In \cite[Corollary 2.15]{CohlCostasSantos22}, we presented an integral for a nonterminating 
well-poised ${}_3\phi_2$, namely
\begin{eqnarray}
\label{int3phi2}
&&\hspace{-0.9cm}\qhyp32{a,b,c}{\frac{qa}{b},\frac{qa}{c}}{q,z}=
\frac{(q,a,\frac{qa}{bc};q)_\infty}
{2\pi\vartheta(h,h\frac{qa}{bcz};q)(\frac{qa}{b},\frac{qa}{c};q)_\infty}\nonumber\\
&&\hspace{0.5cm}\times\int_{-\pi}^\pi
\frac{((\frac{1}{h}\sqrt{\frac{bcz}{a}},h\sqrt{\frac{a}{bcz}})\frac{\tau}{w},
(qh\sqrt{\frac{a}{bcz}},\frac{q}{h}\sqrt{\frac{bcz}{a}},
q\sqrt{\frac{abz}{c}}, q\sqrt{\frac{acz}{b}},
\frac{(bcz)^\frac32}{q\sqrt{a}})\frac{w}{\tau};q)_\infty}
{((\sqrt{\frac{a}{bcz}},\sqrt{\frac{bcz}{q^2a}})\frac{\tau}{w},
(\pm\sqrt{bcz},\pm\sqrt{qbcz},q\sqrt{\frac{az}{bc}})\frac{w}{\tau};q)_\infty}\dd\eta,
\label{e41}
\end{eqnarray}
where the maximum modulus of the denominator factors in the integrand is less than unity.
This integral \eqref{int3phi2} followed from the following transformation
of a nonterminating well-poised ${}_3\phi_2$ with arbitrary argument $z\in\CCdag$, in 
terms of a sum of two nonterminating ${}_5\phi_4(q,q)$ basic hypergeometric series, 
presented
in cf.~\cite[(III.35)]{GaspRah}
\begin{eqnarray}
&&\hspace{-0.9cm}\qhyp32{a,b,c}{\frac{qa}{b},\frac{qa}{c}}{q,z}
%\nonumber\\
%&&\hspace{0.0cm}
=
\frac{(\frac{bcz}{q};q)_\infty}
{(\frac{bcz}{qa};q)_\infty}\qhyp54{\pm\sqrt{a},\pm\sqrt{qa},\frac{qa}{bc}}
{\frac{qa}{b},\frac{qa}{c},\frac{bcz}{q},\frac{q^2a}{bcz}}{q,q}
\nonumber\\&&\hspace{3.5cm}
+\frac{(a,bz,cz,\frac{qa}{bc};q)_\infty}
{(\frac{qa}{b},\frac{qa}{c},z,\frac{qa}{bcz};q)_\infty}
\qhyp54{\pm\frac{bcz}{q\sqrt{a}},\pm\frac{bcz}{\sqrt{qa}},z}
{bz,cz,\frac{bcz}{a},\frac{b^2c^2z^2}{q^2a}}{q,q}.
\label{e42}
\end{eqnarray}
\begin{rem}
\label{rem41}
In order to simplify the constraints for the 
nonterminating infinite $q$-shifted factorials, modified theta functions and nonterminating basic hypergeometric series
expressions which we will present below,
we will avoid adding the 
constraints which must occur in order to prevent
vanishing denominator factors which are not defined.
%related to the values of the parameters for which the expressions are undefined such as the 
%values for which the infinite $q$-Pochhhamer symbols 
%and/or the $\vartheta$ function are equal to 0. 
For example, 
in \eqref{e41} one must require
the constraints
\[
\frac {qa}b, \frac {qa}c\not\in\Omega_q,
\quad 
h,h\frac{qa}{bcz}\not\in\Upsilon_q.
\]
and in \eqref{e42} one must require
the constraints
\[
z,bz,cz,\frac {qa}b, \frac {qa}c,
\frac{bcz}{q},
%\frac{q^2a}{bcz},
%\frac{qa}{bcz},
%\frac{bcz}{a},
\frac{b^2c^2z^2}{q^2a}
\not\in \Omega _q,\quad
\frac{bcz}{a}\not\in\Upsilon_q.
\]
Since it is obvious and sometimes tedious
to know for which values this happens,
we will avoid inserting such constraints in the results below.
\end{rem}

{
\begin{cor}
\label{cor42}
Let $q\in\CCdag$, 
$a,b,c,z\in\CCast$,
$|q^z|<1$.
Then
\begin{eqnarray}
&&\hspace{-0.6cm}
\qhyp32{q^a,q^b}{q^c}{q,q^z}=
\frac{-\log(q)(1\!+\!q)^{b+c+z-2\tau-\frac32}
}
{2\pi(-q,\pm q;q)_\infty (1\!-\!q)^{b+c+z-2\tau+\frac32}
}\nonumber\\
&&\hspace{0.2cm}\times\frac{\Gamma_q(f,1\!-\!f,1\!+\!f\!+\!a\!-\!b\!-\!c\!-\!z,1\!+\!b\!+\!c\!+\!z\!-\!f\!-\!1\!-\!a,1\!+\!a\!-\!b,1\!+\!a\!-\!c)}{\Gamma_q(a,1\!+\!a\!-\!b\!-\!c)}\nonumber\\
&&\hspace{0.4cm}\times
\int_{\frac{\pi}{\log q}}^{-\frac{\pi}{\log q}}
\left(\frac{1\!+\!q}{1\!-\!q}\right)^{2ix}
\frac{\Gamma_{q^2}(
(\frac{b+c+z}{2},
\frac{b+c+z+1}{2})\!+\!ix\!-\!\tau
)\Gamma_q(\frac{a-b-c-z}{2}\!+\!\tau\!-\!ix)
}
{\Gamma_q((\frac{b+c+z-a}{2}\!-\!f,f\!+\!\frac{a-b-c-z}{2})\!+\!\tau\!-\!ix,1\!+\!f\!+\!\frac{a-b-c-z}{2}\!+\!ix\!-\!\tau)}\nonumber\\
&&\hspace{1.4cm}\times\frac{\Gamma_q(\frac{b+c+z-a}{2}\!-\!1\!+\!\tau\!-\!ix,1\!+\!\frac{a+z-b-c}{2}\!+\!ix\!-\!\tau)}
{\Gamma_q((
1\!-\!f\!+\!\frac{b+c+z-a}{2},1\!+\!\frac{a+c+z-b}{2},1\!+\!\frac{a+b+z-c}{2},\frac{3(b+c+z)}{2}\!-\!\frac{a}{2}\!-\!1\!)+\!ix\!-\!\tau)}\,\dd x.
\end{eqnarray}
\end{cor}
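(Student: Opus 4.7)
The plan is to deduce this corollary from the integral representation \eqref{int3phi2} in the same spirit as the corollaries of Theorems \ref{gascard} and \ref{gascaru} earlier in the paper. Specifically, I would apply the substitution $(a,b,c,z,h,\tau,\expe^{i\eta}) \mapsto (q^a, q^b, q^c, q^z, q^f, q^\tau, q^{ix})$ to \eqref{int3phi2}, so that the angular integration over $\eta \in [-\pi,\pi]$ is converted into an integration over $x$ running from $\pi/\log q$ to $-\pi/\log q$. The Jacobian $\dd\eta = \log q\,\dd x$, combined with the endpoint reversal dictated by the sign of $\log q$, produces the overall $-\log q$ factor in the denominator of the pre-factor on the right-hand side.

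Next, I would rewrite each infinite $q$-shifted factorial of the form $(q^y;q)_\infty$ arising after the substitution by means of the $q$-gamma function via \eqref{qgam}, namely
\[
(q^y;q)_\infty = \frac{(q;q)_\infty}{(1-q)^{y-1}\,\Gamma_q(y)}.
\]
The accumulated powers of $(1-q)$ shed by this substitution combine into the overall $(1-q)^{b+c+z-2\tau + 3/2}$ factor, while the pre-factor $(q, a, qa/bc;q)_\infty / (qa/b, qa/c;q)_\infty$ in \eqref{int3phi2} contributes the gamma quotient $\Gamma_q(1+a-b, 1+a-c) / \Gamma_q(a, 1+a-b-c)$. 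The two theta functions $\vartheta(q^f;q)$ and $\vartheta(q^{1+f+a-b-c-z};q)$ unfold via \eqref{tfdef} into the four-factor block $\Gamma_q(f, 1-f, 1+f+a-b-c-z, b+c+z-f-a)$ that appears in the statement.

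The factors $(\pm\sqrt{bcz}\,w/\tau;q)_\infty$ and $(\pm\sqrt{qbcz}\,w/\tau;q)_\infty$ in the denominator of the integrand of \eqref{int3phi2} are handled by invoking identity \eqref{sq-2}, $(\pm u;q)_\infty = (u^2;q^2)_\infty$. After the substitution these become $(q^{b+c+z-2\tau+2ix};q^2)_\infty$ and $(q^{1+b+c+z-2\tau+2ix};q^2)_\infty$, which are then rewritten by the $q^2$-analogue of \eqref{qgam} in terms of $\Gamma_{q^2}$. This is precisely where the $\Gamma_{q^2}$ arguments $(b+c+z)/2 + ix - \tau$ and $(b+c+z+1)/2 + ix - \tau$ originate, and the corresponding scalings $(1-q^2)^{y-1} = (1-q)^{y-1}(1+q)^{y-1}$ account for the combined $(1+q)^{b+c+z-2\tau-3/2}$ pre-factor. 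The residual $(q^2;q^2)_\infty$ factors left behind by these two $\Gamma_{q^2}$ conversions combine with the surviving $(q;q)_\infty$'s via $(q^2;q^2)_\infty = (q;q)_\infty (-q;q)_\infty$ to produce exactly the $(-q,\pm q;q)_\infty$ appearing in the denominator of the pre-factor.

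The main obstacle will be the bookkeeping: correctly counting all the $(1-q)$ and $(1+q)$ exponents coming from the various $q$-gamma and $q^2$-gamma reductions, carefully distinguishing between the $q$-scale and the $q^2$-scale introduced through \eqref{sq-2}, and verifying that the remaining pre-factor collapses to the asserted form. No genuinely new idea is required beyond the template already exploited in Corollary \ref{cor17} and Theorem \ref{thm111}; rather, it is the simultaneous presence of the modified theta functions and the mixed $q$/$q^2$ infinite factorials that makes this variant of the reduction the most delicate bookkeeping exercise among the corollaries of this type in the paper.
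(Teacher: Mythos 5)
Your proposal is correct and follows essentially the same route as the paper: the paper's proof likewise starts from the integral representation \eqref{e41}/\eqref{int3phi2}, applies the map $(a,b,c,z)\mapsto(q^a,q^b,q^c,q^z)$ (with $h=q^f$, $\tau=q^\tau$, $w=q^{ix}$ implicit), and converts the infinite $q$-shifted factorials via the $q$-gamma definition \eqref{qgam} together with \eqref{a2q2} for the $(\pm\,\cdot\,;q)_\infty$ factors, which is exactly your $\Gamma_{q^2}$ mechanism. Your additional bookkeeping of the $(1-q)$, $(1+q)$ and $(-q,\pm q;q)_\infty$ factors is precisely the routine work the paper leaves implicit.
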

}
{
\begin{proof}
Start with \eqref{e41} 
let $(a,b,c,z)\mapsto (q^a,q^b,q^c,q^z)$, and
use the definition of the
$q$-gamma function 
\eqref{qgam}, and
also
\eqref{a2q2} to convert the 
$(\pm a;q)_\infty$ terms.
This completes the proof.
\end{proof}
}

One can now use Theorem \ref{gascaru} to derive a five-term
representation of a nonterminating very-well-poised ${}_3\phi_2$ 
with arbitrary argument.

\begin{thm}
\label{big3phi2thm}
Let $q,z\in\CCdag$, $a,b,c,h\in\CCast$,
{and we assume there are no vanishing denominator factors
(see Remark \ref{rem41}), e.g., 
$qa/b, qa/c\not\in\Omega_q$, and $h,ha/(bcz)\not\in\Upsilon_q$.}
%, $\frac{qa}{b},\frac{qa}{c}\not\in\Omega_q$, $f,f\frac{qa}{bcz}\not\in\Upsilon_q$.
Then
\begin{eqnarray}
&&\hspace{-0.5cm}\qhyp32{a,b,c}{\frac{qa}{b},\frac{qa}{c}}{q,z}
\nonumber\\[0.5cm]
&&\hspace{-0.3cm}
%\Vast\llbracket\,\,
=\frac{\vartheta(hz^{-1},h\frac{qa}{bc};q)
(a,b,c,\frac{b^2c^2z^2}{q^2a};q)_\infty}
{\vartheta(h,h\frac{qa}{bcz};q)(\frac{qa}{b},\frac{qa}{c},\frac{b^2c^2}{q^2a},z;q)_\infty}
\qhyp54
{\frac{q}{b},\frac{q}{c},\frac{qa}{bc},z,\frac{q^3a}{b^2c^2z}}
{\pm \frac{\sqrt{q^3a}}{bc},\pm \frac{q^2\sqrt{a}}{bc}}
{q,q}\nonumber\\
&&\hspace{-0.0cm}+\frac{(a,\frac{qa}{bc};q)_\infty}
{2\vartheta(h,h\frac{qa}{bcz};q)(\frac{qa}{b},\frac{qa}{c};q)_\infty}\nonumber\\
&&\hspace{0.2cm}\times\II{\pm\sqrt{a}}
\vast\llbracket
\frac
{\vartheta(h\sqrt{a},h\frac{q\sqrt{a}}{bcz};q)
(\frac{q\sqrt{a}}{b},\frac{q\sqrt{a}}{c},\frac{bcz}{q\sqrt{a}};q)_\infty}
{(\sqrt{a},\frac{q\sqrt{a}}{bc},\frac{bcz}{q\sqrt{a}};q)_\infty}
\qhyp54{\sqrt{a},\frac{b}{\sqrt{a}},\frac{c}{\sqrt{a}},
\frac{bcz}{q\sqrt{a}},\frac{q^2\sqrt{a}}{bcz}
}{-q,\pm\sqrt{q},\frac{bc}{\sqrt{a}}}{q,q}
\nonumber\\[0.1cm]
&&\hspace{1.0cm}
%\II{\pm\sqrt{a}}
-\frac{\vartheta(h\sqrt{qa},h\frac{\sqrt{qa}}{bcz};q)
(\frac
{\sqrt{qa}}{b},\frac{\sqrt{qa}}{c},\frac{bcz}{\sqrt{q^3a}};q)_\infty}
{(\sqrt{qa},\frac{\sqrt{qa}}{bc},\frac{bcz}{\sqrt{qa}};q)_\infty}
\qhyp54
{\sqrt{qa},\frac{\sqrt{q}b}{\sqrt{a}},\frac{\sqrt{q}c}{\sqrt{a}},\frac{bcz}{\sqrt{qa}},
\frac{\sqrt{q^5a}}{bcz}}
{-q,\pm q^\frac32,\sqrt{q}\frac{bc}{\sqrt{a}}}{q,q}\vast\rrbracket.\nonumber\\
\end{eqnarray}
\end{thm}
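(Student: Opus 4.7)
The strategy is to apply the $J$-series representation \eqref{jjform} of Theorem \ref{gascaru} to the integral representation \eqref{int3phi2} of the nonterminating well-poised ${}_3\phi_2$. This will convert the integral directly into a five-term sum of ${}_5\phi_4$ series matching the right-hand side of the theorem.

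First, I would identify the parameters so that the integrand of \eqref{int3phi2} matches that in \eqref{qqform}. Matching the $\tau/w$ factors in the denominators yields $d_1=\sqrt{a/(bcz)}$ and $d_2=\sqrt{bcz/(q^2a)}$, and matching the $\tau/w$ numerator factors yields $f = bcz/(ha)$. The $w/\tau$ denominator factors give
\[
{\bf c}=\{\pm\sqrt{bcz},\,\pm\sqrt{qbcz},\,q\sqrt{az/(bc)}\}, \qquad C=|{\bf c}|=5,
\]
and the remaining $w/\tau$ numerator factors (after accounting for $f/d_2$ and $q/(fd_1)$) give
\[
{\bf a}=\{q\sqrt{abz/c},\,q\sqrt{acz/b},\,(bcz)^{3/2}/(q\sqrt{a})\}, \qquad A=|{\bf a}|=3.
\]
Crucially, $C=A+2$, so the hypothesis of the $J$-representation \eqref{jjform} is satisfied with equality, and each of the resulting series is a ${}_{A+2}\phi_{C-1}={}_5\phi_4$ with argument $q\,a_1a_2a_3/(d_1d_2c_1\cdots c_5)$; a short computation shows this argument simplifies to $q$.

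Next, I would apply \eqref{jjform} to rewrite the integral as $2\pi(q;q)_\infty^{-1} J({\bf a},{\bf c},{\bf d};f,q)$ and substitute into \eqref{int3phi2}. The sum $J$ has one term for each of the five $c_k\in{\bf c}$. For each $c_k$, I would compute the theta-function prefactor $\vartheta(fc_kd_1,f/(c_kd_2);q)$ and simplify via the symmetry $\vartheta(u;q)=\vartheta(q/u;q)$; compute the infinite $q$-shifted factorial ratio $({\bf a}/c_k;q)_\infty/(c_k{\bf d},{\bf c}_{[k]}/c_k;q)_\infty$, using the doubling identity \eqref{sq-1} to collapse the factors of the form $(\pm u,\pm\sqrt{q}u;q)_\infty$ arising from ${\bf c}_{[k]}/c_k$ into $(u^2;q)_\infty$; and identify the resulting ${}_5\phi_4(q,q)$ top and bottom parameters from $c_k{\bf d}\cup qc_k/{\bf a}$ and $qc_k/{\bf c}_{[k]}$. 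The term $c_k=q\sqrt{az/(bc)}$ produces the isolated leading term, since its $c_kd_1=qa/(bc)$ and $c_kd_2=z$ make all square-root artifacts disappear. The remaining four terms with $c_k\in\{\pm\sqrt{bcz},\pm\sqrt{qbcz}\}$ pair up: the map $c_k\mapsto d_1c_k$ sends them respectively to $\pm\sqrt{a},\pm\sqrt{qa}$, collecting them into the symmetric sum $\II{\pm\sqrt{a}}$ of the two bracketed expressions indexed by whether $c_k\propto\sqrt{bcz}$ or $c_k\propto\sqrt{qbcz}$; the overall relative minus sign between the two bracketed expressions arises from the factor $(q^{-1/2};q)_\infty$ appearing in $({\bf c}_{[k]}/c_k;q)_\infty$ when $c_k\propto\sqrt{qbcz}$.

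The main obstacle will be the careful bookkeeping of the infinite $q$-shifted factorial $({\bf c}_{[k]}/c_k;q)_\infty$: verifying that the $\pm$-paired square-root factors collapse cleanly via \eqref{sq-1}, and tracking the signs arising from factors such as $(\pm q^{-1/2};q)_\infty$ and $(-1;q)_\infty$ that appear in the denominators. No step is conceptually deep; the argument is essentially the substitution of \eqref{jjform} into \eqref{int3phi2} followed by a patient simplification of the five prefactors. The remaining cancellation of overall $(a,qa/(bc);q)_\infty/((qa/b,qa/c;q)_\infty \vartheta(h,hqa/(bcz);q))$ factors from \eqref{int3phi2} with those produced by each summand completes the identification with the stated right-hand side.
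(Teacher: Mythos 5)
Your proposal is correct and follows essentially the same route as the paper: identify the sets $(A,C,D)=(3,5,2)$ with ${\bf c}=\{\pm\sqrt{bcz},\pm\sqrt{qbcz},q\sqrt{az/(bc)}\}$, ${\bf d}=\{\sqrt{a/(bcz)},\sqrt{bcz/(q^2a)}\}$ from the integrand of \eqref{int3phi2} and apply \eqref{jjform} of Theorem \ref{gascaru}, with the $c_k=q\sqrt{az/(bc)}$ term giving the isolated ${}_5\phi_4$ and the $\pm\sqrt{bcz},\pm\sqrt{qbcz}$ terms pairing into the symmetric sum. Your third element of ${\bf a}$, namely $(bcz)^{3/2}/(q\sqrt{a})$, is in fact the one consistent with the integrand of \eqref{e41} and with the argument of the resulting series being $q$ (the set displayed in \eqref{acd3phi2a} appears to carry a stray $\sqrt{q}$), so your bookkeeping is sound.
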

\begin{proof}
In the integrand of \eqref{int3phi2}, 
the sets of parameters are given by 
$(A,C,D)=(3,5,2)$, given by
\begin{eqnarray}
&&\hspace{-2cm}{\bf a}:=\left\{q\sqrt{\frac{abz}{c}},
q\sqrt{\frac{acz}{b}},\sqrt{\frac{b^3c^3z^3}{qa}}\right\},\,
%\\&&\hspace{-5cm}
{\bf c}:=\left\{\pm\sqrt{bcz},\pm\sqrt{qbcz},q\sqrt{\frac{az}{bc}}\right\},\,
\label{acd3phi2a}
\\&&\hspace{-2cm}
{\bf d}:=\left\{\sqrt{\frac{a}{bcz}},\sqrt{\frac{bcz}{q^2a}}\right\}.
\label{acd3phi2b}
\end{eqnarray}
Now we use 
\eqref{jjform} with the sets of parameters
given in \eqref{acd3phi2a}, \eqref{acd3phi2b}
using \eqref{qqform}.
This completes the proof.
\end{proof}

If one chooses a $h=q^nz$ then since
$\vartheta(q^n;q)=0$ for all 
$n\in\Z$, 
%Lemma \ref{lemz},
the five-term transformation
reduces to a four-term transformation.
Then replacing the infinite $q$-shifted
factorials
with arguments involving $q^n$ and $q^{-n}$ using
\eqref{infPochdefn}, \eqref{infPoch2} produces the
following simplified result.

\begin{thm}
Let $q,z\in\CCdag$, $a,b,c\in\CCast$,
{and we assume there are no vanishing denominator factors
(see Remark \ref{rem41}).}
%, $\frac{qa}{b}, \frac{qa}{c}, \frac{bc}{a}\not\in\Omega_q$, $z\not\in\Upsilon_q$}
Then
\begin{eqnarray}
&&\hspace{-1.0cm}\qhyp32{a,b,c}{\frac{qa}{b},\frac{qa}{b}}{q,z}=
\frac{(a;q)_\infty}
{2\vartheta(z;q)(\frac{qa}{b},\frac{qa}{c},\frac{bc}{a};q)_\infty}
\nonumber\\[0.3cm]
&&\hspace{-0.3cm}\times
%\Vast\llbracket\,\,
\II{\pm\sqrt{a}}
\vast\llbracket
\frac
{\vartheta(\sqrt{a}z;q)
(\frac{q\sqrt{a}}{b},\frac{q\sqrt{a}}{c},\frac{bc}{\sqrt{a}};q)_\infty}
{(\sqrt{a};q)_\infty}
\qhyp54{\sqrt{a},\frac{b}{\sqrt{a}},\frac{c}{\sqrt{a}},
\frac{bcz}{q\sqrt{a}},\frac{q^2\sqrt{a}}{bcz}
}{-q,\pm\sqrt{q},\frac{bc}{\sqrt{a}}}{q,q}\nonumber\\
&&\hspace{0.8cm}-
%\II{\pm\sqrt{a}}
\frac{\vartheta(\sqrt{qa}z;q)
(\frac{\sqrt{qa}}{b},\frac{\sqrt{qa}}{c},
\frac{\sqrt{q}\,bc}{\sqrt{a}},\frac{bcz}{\sqrt{q^3a}};q)_\infty}
{(\sqrt{qa},\frac{bcz}{\sqrt{qa}};q)_\infty}
\qhyp54
{\sqrt{qa},\frac{\sqrt{q}b}{\sqrt{a}},\frac{\sqrt{q}c}{\sqrt{a}},
\frac{bcz}{\sqrt{qa}},\frac{\sqrt{q^5a}}{bcz}}
{-q,\pm q^\frac32,\sqrt{q}\frac{bc}{\sqrt{a}}}
{q,q}
\vast\rrbracket.
\end{eqnarray}

\end{thm}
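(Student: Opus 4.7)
The plan is to specialize Theorem~\ref{big3phi2thm} by setting $h = q^n z$ for an arbitrary $n \in \Z$. The key observation, noted just after \eqref{tfdef}, is that $\vartheta(q^n;q) = 0$ for every $n\in\Z$, since the product expansion $\vartheta(q^n;q) = (q^n,q^{1-n};q)_\infty$ coming from \eqref{tfdef} contains a factor $1-1 = 0$. Consequently, with this choice of $h$ one has $\vartheta(hz^{-1};q) = \vartheta(q^n;q) = 0$, and the first summand on the right-hand side of Theorem~\ref{big3phi2thm}, whose prefactor carries that theta function, vanishes identically. The five-term identity therefore collapses to the four-term sum produced by the symmetric bracket over $\pm\sqrt{a}$.

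Next I would reorganise the outer prefactor so that the auxiliary integer $n$ drops out. Splitting $\vartheta(h,h\frac{qa}{bcz};q) = \vartheta(q^n z;q)\,\vartheta(q^n \frac{qa}{bc};q)$ via \eqref{tfdef}, and then applying \eqref{infPoch2} to each $q$-shifted factorial that acquires a $q^{\pm n}$ shift, the finite Pochhammer pieces pair up and cancel between numerator and denominator, as they must since the left-hand side of Theorem~\ref{big3phi2thm} is independent of $h$. What survives reassembles as $\vartheta(z;q)\,(\frac{qa}{bc},\frac{bc}{a};q)_\infty$, after which the factor $(\frac{qa}{bc};q)_\infty$ cancels against the identical factor in the surviving numerator piece $(a,\frac{qa}{bc};q)_\infty$. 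The outer prefactor then reduces to $(a;q)_\infty/\bigl(2\vartheta(z;q)(\frac{qa}{b},\frac{qa}{c},\frac{bc}{a};q)_\infty\bigr)$, matching the statement.

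Finally I would perform the analogous cleanup on each of the two inner summands. For the $+\sqrt{a}$ branch, I would split $\vartheta(h\sqrt{a},h\frac{q\sqrt{a}}{bcz};q) = \vartheta(q^n \sqrt{a} z;q)\,\vartheta(q^n\frac{q\sqrt{a}}{bc};q)$, expand the second factor via \eqref{tfdef}, and use \eqref{infPoch2} to strip the $q^{\pm n}$ powers, producing cancellations of $(\frac{q\sqrt{a}}{bc};q)_\infty$ and of $(\frac{bcz}{q\sqrt{a}};q)_\infty$ between numerator and denominator; what remains is exactly the first entry inside the bracket of the theorem. The $-\sqrt{a}$ branch, in which $\sqrt{a}$ is systematically replaced by $\sqrt{qa}$, is handled identically. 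The main obstacle here is not analytic but purely combinatorial bookkeeping: one must verify that every $q^n$-dependent factor pairs off exactly between numerator and denominator of each summand, in particular within the ${}_5\phi_4(q,q)$ entries whose parameters are unaffected by the specialization $h=q^n z$. Once that verification is complete, Theorem~\ref{big3phi2thm} immediately delivers the claimed four-term transformation.
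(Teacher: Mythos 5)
Your proposal is correct and follows essentially the same route as the paper: specialize $h=q^nz$ in Theorem~\ref{big3phi2thm}, use $\vartheta(q^n;q)=0$ to kill the first term, and then cancel all $n$-dependent factors via \eqref{infPochdefn}, \eqref{infPoch2} (equivalently the theta quasi-periodicity) to reach the stated four-term identity. The bookkeeping you flag is exactly what the paper's proof asserts, and it does pair off as you describe.
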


\begin{proof}
Choose $h=q^nz$ in Theorem \ref{big3phi2thm}.
Then 
since $\vartheta(q^n;q)=0$ for all 
$n\in\Z$, 
%through Lemma \ref{lemz},
the five-term transformation
reduces to a four-term transformation.
Then replacing the infinite $q$-shifted
factorials with arguments involving 
$q^n$ and $q^{-n}$ using
\eqref{infPochdefn}, \eqref{infPoch2}, the
factors involving $n$ all cancel,
which completes the proof.
\end{proof}

Similarly, if one chooses a $h=q^n\frac{bc}{qa}$ then the five-term transformation
reduces to a four-term transformation.

\begin{thm}
Let $q,z \in\CCdag$, $a,b,c\in\CCast$,
{and we assume there are no vanishing denominator factors
(see Remark \ref{rem41}).}
%, $\frac{qa}{b}, \frac{qa}{c}\not \in\Omega_q$}, $z, \frac{bc}{qa}\not\in\Upsilon_q$
Then
\begin{eqnarray}
&&\hspace{-0.5cm}\qhyp32{a,b,c}{\frac{qa}{b},\frac{qa}{b}}{q,z}=
\frac{(a,\frac{qa}{bc};q)_\infty}
{2\vartheta(z^{-1},\frac{bc}{qa};q)(\frac{qa}{b},\frac{qa}{c};q)_\infty}
\nonumber\\[0.5cm]
&&\hspace{0cm}\times\,\,
\II{\pm\sqrt{a}}\vast\llbracket
\frac
{\vartheta(\frac{bc}{q\sqrt{a}},\frac{1}{\sqrt{a}z};q)
(\frac{q\sqrt{a}}{b},\frac{q\sqrt{a}}{c};q)_\infty}
{(\sqrt{a},\frac{q\sqrt{a}}{bc};q)_\infty}
\qhyp54{\sqrt{a},\frac{b}{\sqrt{a}},\frac{c}{\sqrt{a}},
\frac{bcz}{q\sqrt{a}},\frac{q^2\sqrt{a}}{bcz}
}{-q,\pm\sqrt{q},\frac{bc}{\sqrt{a}}}{q,q}\nonumber\\
&&\hspace{0.7cm}-
%\II{\pm\sqrt{a}}
\frac{\vartheta(\frac{bc}{\sqrt{qa}},
\frac{1}{\sqrt{qa}z};q)
(\frac{\sqrt{qa}}{b},\frac{\sqrt{qa}}{c},\frac{bcz}{\sqrt{q^3a}};q)_\infty}
{(\sqrt{qa},\frac{\sqrt{qa}}{bc},\frac{bcz}{\sqrt{qa}};q)_\infty}
\qhyp54
{\sqrt{qa},\frac{\sqrt{q}\,b}{\sqrt{a}},\frac{\sqrt{q}\,c}{\sqrt{a}},
\frac{bcz}{\sqrt{qa}},\frac{\sqrt{q^5a}}{bcz}}
{-q,\pm q^\frac32,\frac{\sqrt{q}bc}{\sqrt{a}}}
{q,q}
\vast\rrbracket.
\end{eqnarray}
\end{thm}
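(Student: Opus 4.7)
The plan is to specialize the five-term identity of Theorem \ref{big3phi2thm} at $h=q^n\frac{bc}{qa}$ for some (equivalently any) $n\in\mathbb Z$. With this choice, $h\cdot\frac{qa}{bc}=q^n$, so $\vartheta(h\tfrac{qa}{bc};q)=\vartheta(q^n;q)=0$ by \eqref{tfdef}, and the coefficient of the first ${}_5\phi_4$ (the one sitting outside the symmetric $\II{\pm\sqrt a}$ bracket) vanishes. This reduces the five-term expression to a four-term expression consisting of the two ${}_5\phi_4$'s inside the bracket. Crucially, the arguments of these two ${}_5\phi_4$'s and the $(\,\cdot\,;q)_\infty$ prefactors do not depend on $h$; only the theta-function prefactors do, and these are what I need to simplify.

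Next I would rewrite the remaining theta pairs using the quasi-periodicity relation $\vartheta(q^m w;q)=(-w)^{-m}q^{-\binom{m}{2}}\vartheta(w;q)$ for $m\in\mathbb Z$, obtained by iterating \eqref{infneg}. Substituting $h=q^n\frac{bc}{qa}$, the common denominator theta product $\vartheta(h,h\tfrac{qa}{bcz};q)$ becomes $\vartheta(q^n\tfrac{bc}{qa},q^n z^{-1};q)$; the $\sqrt a$-numerator theta product $\vartheta(h\sqrt a,h\tfrac{q\sqrt a}{bcz};q)$ becomes $\vartheta(q^n\tfrac{bc}{q\sqrt a},q^n\tfrac{1}{\sqrt a z};q)$; and the $\sqrt{qa}$-numerator theta product $\vartheta(h\sqrt{qa},h\tfrac{\sqrt{qa}}{bcz};q)$ becomes $\vartheta(q^n\tfrac{bc}{\sqrt{qa}},q^n\tfrac{1}{\sqrt{qa}\,z};q)$. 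Applying the quasi-periodicity to both arguments of each theta pair converts it into its $n=0$ version multiplied by a factor $(w_1 w_2)^{-n}q^{-2\binom{n}{2}}$.

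The key observation that makes all $n$-dependence cancel is that in each of the three theta pairs the product of the two arguments equals $\tfrac{bc}{qaz}$: namely $\tfrac{bc}{qa}\cdot z^{-1}=\tfrac{bc}{q\sqrt a}\cdot\tfrac{1}{\sqrt a z}=\tfrac{bc}{\sqrt{qa}}\cdot\tfrac{1}{\sqrt{qa}\,z}=\tfrac{bc}{qaz}$. Consequently $(w_1 w_2)^{-n}q^{-2\binom{n}{2}}$ is the same factor in numerator and denominator for each of the two surviving terms, and cancels identically. What remains is precisely the four-term identity in the statement: the common denominator contributes $\vartheta(z^{-1},\tfrac{bc}{qa};q)$, and the two numerators contribute $\vartheta(\tfrac{bc}{q\sqrt a},\tfrac{1}{\sqrt a z};q)$ and $\vartheta(\tfrac{bc}{\sqrt{qa}},\tfrac{1}{\sqrt{qa}\,z};q)$ as claimed, while the ${}_5\phi_4$'s and $(\,\cdot\,;q)_\infty$ factors are transported unchanged from Theorem \ref{big3phi2thm}. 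The main obstacle is nothing conceptual, just careful bookkeeping of signs and $q$-powers in the quasi-periodicity applications; the algebraic miracle that the three product pairs give the same value $\tfrac{bc}{qaz}$ is exactly the structural feature of Theorem \ref{big3phi2thm} that makes this specialization produce a clean four-term identity.
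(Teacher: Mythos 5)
Your proposal is correct and takes essentially the same route as the paper: both specialize $h=q^n\frac{bc}{qa}$ in Theorem \ref{big3phi2thm} so that $\vartheta(q^n;q)=0$ annihilates the first term, and then verify that the residual $n$-dependence cancels between numerator and denominator theta factors. The paper phrases that cancellation via the $q$-shifted factorial identities \eqref{infPochdefn}, \eqref{infPoch2}, while you phrase it via the theta quasi-periodicity obtained by iterating \eqref{infneg} together with the observation that all three theta pairs have argument product $\frac{bc}{qaz}$ --- the same computation in slightly different clothing.
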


\begin{proof}
Choose
$h=q^n\frac{bc}{qa}$ 
in Theorem \ref{big3phi2thm}.
Then 
since $\vartheta(q^n;q)=0$ for all 
$n\in\Z$, 
%through Lemma \ref{lemz},
the five-term transformation
reduces to a four-term transformation.
Then replacing the infinite $q$-shifted
factorials with arguments involving
$q^n$ and $q^{-n}$ using
\eqref{infPochdefn}, \eqref{infPoch2}, the
factors involving $n$ all cancel,
which completes the proof.
\end{proof}

\section{Two, Four and five-term transformations
for nonterminating very-well-poised ${}_5W_4$}

%Here 
{In this section} we present a $q$-Mellin--Barnes integral for a 
nonterminating very-well-poised ${}_5W_4$.

\begin{thm}
\label{thm51}
Let $q, z\in\CCdag$, {$\tau\in(0,\infty)$,} $a,b,c,h\in\CCast$,
%,$\frac {qa}b, \frac {qa}c, \frac {b^2 c^2z^2}{a},\in \Omega_q$ $h, h\frac{qa}{bcz}\not\in\Upsilon_q$
$w=\expe^{i\eta}$,
{$h\sqrt{\frac{qa}{bcz}}\not\in\Upsilon_q$},
{and we assume there are no vanishing denominator factors
(see Remark \ref{rem41})
and the maximum modulus of the denominator factors in the integrand is less than unity.}
Then
\begin{eqnarray}
\label{int5W4}
&&\hspace{-0.9cm}\Whyp54{a}{b,c}{q,z}=
\frac{(q,qa,\frac{qa}{bc},\frac{b^2c^2z^2}{qa};q)_\infty}
{2\pi\vartheta(h,h\frac{qa}{bcz};q)(\frac{qa}{b},\frac{qa}{c},\frac{b^2c^2z^2}{a};q)_\infty}\nonumber\\
&&\hspace{0.5cm}\times\int_{-\pi}^\pi
\frac{((\frac{q}{h}\sqrt{\frac{bcz}{qa}},h\sqrt{\frac{qa}{bcz}})\frac{\tau}{w},
(h\sqrt{\frac{qa}{bcz}},\frac{q}{h}\sqrt{\frac{bcz}{qa}},
\sqrt{\frac{qacz}{b}},\sqrt{\frac{qabz}{c}},\frac{\sqrt{q}(bcz)^\frac32}{\sqrt{a}})\frac{w}{\tau};q)_\infty}
{((\sqrt{\frac{qa}{bcz}},\sqrt{\frac{bcz}{qa}})\frac{\tau}{w},
(\pm\sqrt{bcz},\pm\sqrt{qbcz},\sqrt{\frac{qaz}{bc}})\frac{w}{\tau};q)_\infty}\dd\eta.
\end{eqnarray}
\end{thm}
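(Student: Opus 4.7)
The plan is to mirror the derivation of the ${}_3\phi_2$ integral \eqref{int3phi2} from its symmetric two-term transformation \eqref{e42}, but now applied to a nonterminating very-well-poised ${}_5W_4$. The first step is to locate (or derive, from the standard Bailey/Gasper--Rahman machinery for very-well-poised series) a transformation expressing $\Whyp54{a}{b,c}{q,z}$ as a symmetric sum of two nonterminating ${}_5\phi_4$'s with argument $q$, i.e., in the shape
\[
\Whyp54{a}{b,c}{q,z}
=K_1\,\qhyp54{\cdots}{\cdots}{q,q}+K_2\,\qhyp54{\cdots}{\cdots}{q,q},
\]
where the two ${}_5\phi_4$'s are related by a swap of the two parameters $d_1=\sqrt{qa/(bcz)}$, $d_2=\sqrt{bcz/(qa)}$ (note $d_1d_2=1$), and the prefactors $K_1,K_2$ are ratios of infinite $q$-shifted factorials. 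A natural candidate is the $q$-analogue of a classical contiguous/Watson-type identity for a very-well-poised ${}_5W_4$; one can obtain it either by specializing a known ${}_8W_7$ transformation or by applying \cite[(III.35)]{GaspRah} after using the standard conversion of the very-well-poised form to a well-poised ${}_3\phi_2$. This is the only nontrivial input and is where most of the work sits.

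Once this transformation is in hand, the second step is purely bookkeeping. Read off the parameter sets so that the sum matches $H({\bf a},{\bf c},{\bf d};q)$ from \eqref{Hdefn}; the intended identification is
\[
{\bf a}=\Bigl\{\sqrt{\tfrac{qacz}{b}},\sqrt{\tfrac{qabz}{c}},\tfrac{\sqrt{q}(bcz)^{3/2}}{\sqrt{a}}\Bigr\},\quad
{\bf c}=\bigl\{\pm\sqrt{bcz},\pm\sqrt{qbcz},\sqrt{\tfrac{qaz}{bc}}\bigr\},\quad
{\bf d}=\bigl\{d_1,d_2\bigr\},
\]
so that $A=3$, $C=5$, and the balancing index $C-A-2=0$ matches the argument $q$ of the ${}_5\phi_4$'s. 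In particular the denominator entries $\pm\sqrt{bcz},\pm\sqrt{qbcz}$ arising in ${\bf c}$ are exactly those that produce, via \eqref{sq-1}, the ${}_5W_4$ very-well-poised pattern in the prefactor.

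The third step is to invoke Theorem \ref{gascaru}, applying the equality \eqref{qqform} between the symmetric sum $H({\bf a},{\bf c},{\bf d};q)$ and the $q$-Mellin--Barnes integral of \eqref{qqform}, with the free parameter $f$ relabelled as $h$ and the contour parameter $\sigma$ relabelled as $\tau$. The prefactor of the integral in \eqref{int5W4}, namely
\[
\frac{(q,qa,\tfrac{qa}{bc},\tfrac{b^2c^2z^2}{qa};q)_\infty}
{2\pi\vartheta(h,h\tfrac{qa}{bcz};q)(\tfrac{qa}{b},\tfrac{qa}{c},\tfrac{b^2c^2z^2}{a};q)_\infty},
\]
arises by collecting the constants $K_1,K_2$ from step one together with the factor $2\pi\vartheta(f,fd_1/d_2;q)/(q;q)_\infty$ from \eqref{qqform}, using the identity \eqref{infneg} (and the corresponding ratio identities for $\vartheta$) to absorb any $d_1/d_2=qa/(bcz)$ balancing ratios.

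The main obstacle is step one: producing the correct symmetric two-term $q$-transformation for the nonterminating ${}_5W_4$ in the precise shape needed. Once this is established, the convergence conditions (encoded in the hypothesis that the maximum modulus of the denominator factors in the integrand is less than unity, together with $h\sqrt{qa/(bcz)}\notin\Upsilon_q$ to ensure $\vartheta(h,h\tfrac{qa}{bcz};q)\neq 0$) coincide with those required in Theorem \ref{gascaru}, and the remaining verification is a routine identification of parameters.
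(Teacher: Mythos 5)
Your proposal matches the paper's proof essentially step for step: the paper likewise starts from the two-term expansion of the nonterminating very-well-poised ${}_5W_4$ as a sum of two balanced ${}_5\phi_4(q,q)$ series and then applies Theorem \ref{gascaru} via \eqref{qqform} with exactly the parameter sets $(A,C,D)=(3,5,2)$, ${\bf a}$, ${\bf c}$, ${\bf d}=\{\sqrt{qa/(bcz)},\sqrt{bcz/(qa)}\}$ that you identify, relabelling $f\mapsto h$, $\sigma\mapsto\tau$. The one step you flag as the main obstacle is not something that needs deriving: the required symmetric two-term transformation is the known formula \cite[(3.4.4)]{GaspRah}, which the paper simply quotes, so your argument is complete once that citation is supplied.
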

\begin{proof}
The integral for a nonterminating
very-well-poised ${}_5W_4$ with
arbitrary argument $z$ \eqref{int5W4}
follows from the following transformation
of a very-well-poised ${}_5W_4$ in terms
of a sum of two nonterminating balanced ${}_5\phi_4(q,q)$ basic hypergeometric 
series \cite[(3.4.4)]{GaspRah}
\begin{eqnarray}
&&\hspace{-0.3cm}\Whyp54{a}{b,c}{q,z}=\frac{(\frac{b^2c^2z^2}{qa},qbcz;q)_\infty}{(\frac{b^2c^2z^2}{a},\frac{bcz}{qa};q)_\infty}
\qhyp54{\pm\sqrt{qa},\pm q\sqrt{a},\frac{qa}{bc}}
{\frac{qa}{b},\frac{qa}{c},qbcz,\frac{q^2a}{bcz}}{q,q}
\nonumber\\
&&\hspace{5cm}+\frac{(qa,bz,cz,\frac{qa}{bc};q)_\infty}
{(\frac{qa}{b},\frac{qa}{c},z,\frac{qa}{bcz};q)_\infty}
\qhyp54{\pm\frac{bcz}{\sqrt{qa}},\pm\frac{bcz}{\sqrt{a}},z}{bz,cz,\frac{b^2c^2z^2}{a},\frac{bcz}{a}}{q,q}.
\end{eqnarray}
Now apply Theorem \ref{gascaru}
with $(A,C,D)=(3,5,2)$, given by
\begin{eqnarray}
&&\hspace{-2cm}{\bf a}:=\left\{\sqrt{\frac{qabz}{c}},
\sqrt{\frac{qacz}{b}},\sqrt{\frac{qb^3c^3z^3}{a}}\right\},\,
%\\&&\hspace{-5cm}
{\bf c}:=\left\{\pm\sqrt{bcz},\pm\sqrt{qbcz},\sqrt{\frac{qaz}{bc}}\right\},\,
\label{acd5Wa}
\\&&\hspace{-2cm}
{\bf d}:=\left\{\sqrt{\frac{qa}{bcz}},\sqrt{\frac{bcz}{qa}}\right\},
\label{acd5Wb}
\end{eqnarray}
which generates the integral in \eqref{int5W4}
using \eqref{qqform}.
{Clearly $h,h\frac{qa}{bcz}\not\in\Omega_q$ since then one would have vanishing denominator factors which are not defined. Similarly one must avoid vanishing denominator factors for other infinite $q$-shifted factorials. Furthermore, the denominator factors in the integrand must have maximum modulus less than unity so that the integral converges.}
This completes the proof.
\end{proof}

\noindent Now we apply \eqref{jjform} using the 
parameters ${\bf a}$, ${\bf c}$, ${\bf d}$ 
defined in \eqref{acd5Wa}, \eqref{acd5Wb}.
Since $C=5$, we generate a five-term transformation
for the general nonterminating very-well-poised 
${}_5W_4$. This is given in the
following theorem.

\begin{thm}
\label{thm52}
Let $q,z\in\CCdag$, $a,b,c,h\in\CCast$,
%$\frac{b^2c^2z^2}{a},\frac{qa}{b},\frac{qa}{c}\not\in\Omega_q$, $h,h\frac{qa}{bcz}\not\in\Upsilon_q$.
$w=\expe^{i\eta}$,
{and we assume there are no vanishing denominator factors
(see Remark \ref{rem41}).}
Then, one has the following five-term transformation
for a nonterminating very-well-poised ${}_5W_4$ 
with 
%arbitrary 
argument {$z$}:
\begin{eqnarray}
&&\hspace{-0.0cm}\Whyp54{a}{b,c}{q,z}=\frac{(qa,\frac{b^2c^2z^2}{qa},\frac{qa}{bc};q)_\infty}
{\vartheta(h,h\frac{qa}{bcz};q)(\frac{b^2c^2z^2}{a},\frac{qa}{b},\frac{qa}{c};q)_\infty}\nonumber\\
&&\hspace{0.1cm}\times\Biggl(
\frac{
\vartheta(h\sqrt{qa},h\frac{\sqrt{qa}}{bcz};q)
(\frac{\sqrt{qa}}{b},\frac{\sqrt{qa}}{c},bcz\sqrt{\frac{q}{a}};q)_\infty}
{(-1,\pm\sqrt{q},\sqrt{qa},\frac{\sqrt{qa}}{bc},\frac{bcz}{\sqrt{qa}};q)_\infty}
\qhyp54
{\sqrt{qa},\frac{qb}{\sqrt{qa}},\frac{qc}{\sqrt{qa}},\frac{bcz}{\sqrt{qa}},\frac{\sqrt{qa}}{bcz}}
{-q,\pm\sqrt{q},\frac{\sqrt{q}bc}{\sqrt{a}}}
{q,q}\nonumber\\
&&\hspace{0.3cm}+\frac{
\vartheta(-h\sqrt{qa},-h\frac{\sqrt{qa}}{bcz};q)
(-\frac{\sqrt{qa}}{b},-\frac{\sqrt{qa}}{c},-bcz\sqrt{\frac{q}{a}};q)_\infty}
{(-1,\pm\sqrt{q},-\sqrt{qa},-\frac{\sqrt{qa}}{bc},-\frac{bcz}{\sqrt{qa}};q)_\infty}
\qhyp54
%{-\sqrt{qa},-\frac{qb}{\sqrt{qa}},-\frac{qc}{\sqrt{qa}},-\frac{bcz}{\sqrt{qa}},-\frac{\sqrt{qa}}{bcz}}
{-\sqrt{qa},\frac{-\sqrt qb}{\sqrt{a}},\frac{-\sqrt qc}{\sqrt{a}},\frac{-bcz}{\sqrt{qa}},\frac{-\sqrt{qa}}{bcz}}
{-q,\pm\sqrt{q},-\frac{\sqrt{q}bc}{\sqrt{a}}}
{q,q}\nonumber\\
&&\hspace{0.3cm}+\frac{
\vartheta(hq\sqrt{a},h\frac{\sqrt{a}}{bcz};q)
(\frac{\sqrt{a}}{b},\frac{\sqrt{a}}{c},\frac{bcz}{\sqrt{a}};q)_\infty}
{(-1,\pm\frac{1}{\sqrt{q}},q\sqrt{a},\frac{\sqrt{a}}{bc},\frac{bcz}{\sqrt{a}};q)_\infty}
\qhyp54
{q\sqrt{a},\frac{qb}{\sqrt{a}},\frac{qc}{\sqrt{a}},\frac{bcz}{\sqrt{a}},\frac{q\sqrt{a}}{bcz}}
{-q,\pm q^\frac32,\frac{qbc}{\sqrt{a}}}
{q,q}\nonumber\\
&&\hspace{0.3cm}+\frac{
\vartheta(-hq\sqrt{a},-h\frac{\sqrt{a}}{bcz};q)
(\frac{-\sqrt{a}}{b},-\frac{\sqrt{a}}{c},-\frac{bcz}{\sqrt{a}};q)_\infty}
{(-1,\pm\frac{1}{\sqrt{q}},-q\sqrt{a},-\frac{\sqrt{a}}{bc},-\frac{bcz}{\sqrt{a}};q)_\infty}
\qhyp54
{-q\sqrt{a},-\frac{qb}{\sqrt{a}},-\frac{qc}{\sqrt{a}},-\frac{bcz}{\sqrt{a}},-\frac{q\sqrt{a}}{bcz}}
{-q,\pm q^\frac32,-\frac{qbc}{\sqrt{a}}}
{q,q}\nonumber\\
&&\hspace{0.3cm}+\frac
{\vartheta(\frac{h}{z},h\frac{qa}{bc};q)
(b,c,\frac{b^2c^2z}{a};q)_\infty}
{(
%\pm\frac{bc}{\sqrt{a}},
%\pm\frac{bc}{\sqrt{qa}},
z,\frac{qa}{bc},\frac{b^2c^2}{qa};q)_\infty}
\qhyp54
{\frac{q}{b},\frac{q}{c},\frac{qa}{bc},z,\frac{qa}{b^2c^2z}}
{\pm\frac{q\sqrt{a}}{bc},\pm\frac{q^\frac32\sqrt{a}}{bc}}
{q,q}\Biggr).
\end{eqnarray}
\end{thm}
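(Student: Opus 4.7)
The plan is to apply the sum representation \eqref{jjform} of Theorem \ref{gascaru} directly to the integral \eqref{int5W4} of Theorem \ref{thm51}, reusing the same parameter identifications that were already made in the proof of Theorem \ref{thm51}. Namely, take ${\bf a}, {\bf c}, {\bf d}$ as in \eqref{acd5Wa}, \eqref{acd5Wb}, with cardinalities $(A,C,D)=(3,5,2)$. Since $C=A+2=5$, we sit exactly at the convergence boundary of \eqref{jjform}, and the formula produces a sum of five nonterminating ${}_5\phi_4$ series with argument $q$, one for each
\[
c_k \in \bigl\{\pm\sqrt{bcz},\ \pm\sqrt{qbcz},\ \sqrt{qaz/(bc)}\bigr\}.
\]
This is precisely the five-term structure of the claimed identity.

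The first step is to identify the free parameter $f$ in \eqref{Jdefn} with the free parameter $h$ in \eqref{int5W4}, and to compute the modified-theta prefactor $\vartheta(fc_kd_1,f/(c_kd_2);q)$ for each $c_k$. Because $d_1=\sqrt{qa/(bcz)}$ and $d_2=\sqrt{bcz/(qa)}$, the products $c_kd_1$ reduce to $\pm\sqrt{qa}$, $\pm q\sqrt{a}$, and $q\sqrt{a}/(bc)$, which matches the theta-function arguments displayed in each of the five summands. The second step is to read off the numerator parameters of each ${}_5\phi_4$ from $c_k{\bf d}$ and $qc_k/{\bf a}$ and the denominator parameters from $qc_k/{\bf c}_{[k]}$. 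The common argument $q(qc_k)^{C-A-2}a_1a_2a_3/(d_1d_2c_1\cdots c_5)$ collapses to $q$ because the exponent $C-A-2$ vanishes and the remaining ratio is identically $1$ by a short direct calculation from \eqref{acd5Wa}, \eqref{acd5Wb}. The third step is to multiply the resulting $J$-sum by the $2\pi/(q;q)_\infty$ coming from \eqref{jjform} and the external prefactor in \eqref{int5W4}, at which point the overall coefficient shown in the theorem is reproduced.

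The main obstacle is not conceptual but algebraic: for the four choices $c_k=\pm\sqrt{bcz},\pm\sqrt{qbcz}$, the shifted factorials $({\bf c}_{[k]}/c_k;q)_\infty$ contain products of $(\pm 1,\pm\sqrt{q};q)_\infty$ type that must be rearranged via \eqref{sq-1}, \eqref{sq-2} (and the standard identities for $(-1;q)_\infty$) to produce the grouped denominator factors $(-1,\pm\sqrt{q};q)_\infty$ and $(-1,\pm 1/\sqrt{q};q)_\infty$ displayed in the statement; one also has to track signs and half-integer powers of $q$ carefully through $q c_k/{\bf a}$ and $q c_k/{\bf c}_{[k]}$. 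The fifth term, corresponding to $c_5=\sqrt{qaz/(bc)}$, is algebraically the cleanest and reproduces the ${}_5\phi_4$ whose numerator contains $z$ and $qa/(b^2c^2z)$. Once all these simplifications are performed term by term, the five-term identity follows directly.
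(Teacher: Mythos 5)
Your proposal is correct and is essentially the paper's own argument: apply \eqref{jjform} of Theorem \ref{gascaru} with the parameter sets \eqref{acd5Wa}, \eqref{acd5Wb} (so $C=A+2=5$, argument collapsing to $q$) to the integral of Theorem \ref{thm51}, identify $f$ with $h$, and combine with the prefactor of \eqref{int5W4}. The only slip is cosmetic: for $c_5=\sqrt{qaz/(bc)}$ one has $c_5d_1=qa/(bc)$ (not $q\sqrt{a}/(bc)$), which is exactly the theta argument $h\,qa/(bc)$ in the fifth term.
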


One can force the last term to vanish by
setting for $n\in\Z$, $h=q^nz^{-1}$ or
$h=q^{n-1}\frac{bc}{a}$, providing 
a naturally symmetric four-term transformation
for a nonterminating very-well-poised ${}_5W_4$ with %arbitrary
argument $z$.

\begin{thm}Let $n\in\Z$, $q,z\in\CCdag$, 
$a,b,c\in\CCast$,
%$\frac{b^2c^2z^2}{a},\frac{qa}{b},\frac{qa}{c}\not\in\Omega_q$,
%$z,\frac{a}{bc}\not\in\Upsilon_q$.
$w=\expe^{i\eta}$,
{and we assume there are no vanishing denominator factors
(see Remark \ref{rem41}).}
Then, one has the following four-term transformations
for a nonterminating very-well-poised ${}_5W_4$ with 
%arbitrary 
argument {$z$}:
\begin{eqnarray}
&&\hspace{-0.0cm}\Whyp54{a}{b,c}{q,z}=\frac{(qa,\frac{b^2c^2z^2}{qa},\frac{qa}{bc};q)_\infty}
{2\vartheta(q^nz,q^{n+1}\frac{a}{bc};q)
(-q,\frac{b^2c^2z^2}{a},\frac{qa}{b},\frac{qa}{c};q)_\infty}\nonumber\\
&&\hspace{0.1cm}\times\Biggl(
\frac{
\vartheta(q^{n+\frac12}z\sqrt{a},\frac{q^{n+\frac12}\sqrt{a}}{bc};q)
(\frac{\sqrt{qa}}{b},\frac{\sqrt{qa}}{c},bcz\sqrt{\frac{q}{a}};q)_\infty}
{(\pm\sqrt{q},\sqrt{qa},\frac{\sqrt{qa}}{bc},\frac{bcz}{\sqrt{qa}};q)_\infty}
\qhyp54
{\sqrt{qa},\frac{qb}{\sqrt{qa}},\frac{qc}{\sqrt{qa}},\frac{bcz}{\sqrt{qa}},\frac{\sqrt{qa}}{bcz}}
{-q,\pm\sqrt{q},\frac{\sqrt{q}bc}{\sqrt{a}}}
{q,q}\nonumber\\
&&\hspace{0.3cm}+\frac{
\vartheta(-q^{n+\frac12}z\sqrt{a},\frac{-q^{n+\frac12}\sqrt{a}}{bc};q)
(\frac{-\sqrt{qa}}{b},\frac{-\sqrt{qa}}{c},-bcz\sqrt{\frac{q}{a}};q)_\infty}
{(\pm\sqrt{q},-\sqrt{qa},-\frac{\sqrt{qa}}{bc},-\frac{bcz}{\sqrt{qa}};q)_\infty}
\qhyp5{4\hspace{-0.5mm}}
{\hspace{-1mm}-\sqrt{qa},\!\frac{-\sqrt qb}{\sqrt{a}},\!\frac{-\sqrt qc}{\sqrt{a}},\frac{-bcz}{\sqrt{qa}},\frac{-\sqrt{qa}}{bcz}}
{-q,\pm\sqrt{q},-\frac{\sqrt{q}bc}{\sqrt{a}}}
{\hspace{-0.5mm}q,\!q}\nonumber\\
&&\hspace{0.3cm}+\frac{
\vartheta(-q^{n+1}z\sqrt{a},-q^n\frac{\sqrt{a}}{bc};q)
(\frac{-\sqrt{a}}{b},-\frac{\sqrt{a}}{c},-\frac{bcz}{\sqrt{a}};q)_\infty}
{(\pm\frac{1}{\sqrt{q}},-q\sqrt{a},-\frac{\sqrt{a}}{bc},-\frac{bcz}{\sqrt{a}};q)_\infty}
\qhyp54
{-q\sqrt{a},-\frac{qb}{\sqrt{a}},-\frac{qc}{\sqrt{a}},-\frac{bcz}{\sqrt{a}},-\frac{q\sqrt{a}}{bcz}}
{-q,\pm q^\frac32,-\frac{qbc}{\sqrt{a}}}
{q,q}\hspace{-1.8mm}\nonumber\\
&&\hspace{0.3cm}+\frac{
\vartheta(q^{n+1}z\sqrt{a},q^n\frac{\sqrt{a}}{bc};q)
(\frac{\sqrt{a}}{b},\frac{\sqrt{a}}{c},\frac{bcz}{\sqrt{a}};q)_\infty}
{(\pm\frac{1}{\sqrt{q}},q\sqrt{a},\frac{\sqrt{a}}{bc},\frac{bcz}{\sqrt{a}};q)_\infty}
\left.\qhyp54
{q\sqrt{a},\frac{qb}{\sqrt{a}},\frac{qc}{\sqrt{a}},\frac{bcz}{\sqrt{a}},\frac{q\sqrt{a}}{bcz}}
{-q,\pm q^\frac32,\frac{qbc}{\sqrt{a}}}{q,q}\right)\\
&&\hspace{-0.0cm}=\frac{(qa,\frac{b^2c^2z^2}{qa},\frac{qa}{bc};q)_\infty}
{2\vartheta(q^{n-1}\frac{bc}{a},\frac{q^{n}}{z};q)
(-q,\frac{b^2c^2z^2}{a},\frac{qa}{b},\frac{qa}{c};q)_\infty}\nonumber\\
&&\hspace{0.1cm}\times\Biggl(
\frac{\vartheta(q^{n-\frac12}\frac{bc}{\sqrt{a}},
\frac{q^{n-\frac12}}{\sqrt{a}z};q)
(\frac{\sqrt{qa}}{b},\frac{\sqrt{qa}}{c},bcz\sqrt{\frac{q}{a}};q)_\infty}
{(\pm\sqrt{q},\sqrt{qa},\frac{\sqrt{qa}}{bc},\frac{bcz}{\sqrt{qa}};q)_\infty}
\qhyp54{\sqrt{qa},\frac{qb}{\sqrt{qa}},\frac{qc}{\sqrt{qa}},\frac{bcz}
{\sqrt{qa}},\frac{\sqrt{qa}}{bcz}}
{-q,\pm\sqrt{q},\frac{\sqrt{q}bc}{\sqrt{a}}}{q,q}\nonumber\\
&&\hspace{0.3cm}
+\frac{\vartheta(-q^{n-\frac12}\frac{bc}{\sqrt{a}},
\frac{-q^{n-\frac12}}{\sqrt{a}z};q)
(\frac{-\sqrt{qa}}{b},\frac{-\sqrt{qa}}{c},-bcz\sqrt{\frac{q}{a}};q)_\infty}
{(\pm\sqrt{q},-\sqrt{qa},\frac{-\sqrt{qa}}{bc},\frac{-bcz}{\sqrt{qa}};q)_\infty}
\qhyp5{4\hspace{-0.6mm}}{\hspace{-0.8mm}-\sqrt{qa},\frac{-\sqrt qb}{\sqrt{a}},\frac{-\sqrt qc}
{\sqrt{a}},\frac{-bcz}{\sqrt{qa}},\frac{-\sqrt{qa}}{bcz}}{-q,\pm\sqrt{q},-\frac{\sqrt{q}bc}
{\sqrt{a}}}{q,q}
\nonumber\\ &&\hspace{0.3cm}
+\frac{\vartheta(-q^{n}\frac{bc}{\sqrt{a}},-\frac{q^{n-1}}{\sqrt{a}z};q)
(\frac{-\sqrt{a}}{b},-\frac{\sqrt{a}}{c},-\frac{bcz}{\sqrt{a}};q)_\infty}
{(\pm\frac{1}{\sqrt{q}},-q\sqrt{a},-\frac{\sqrt{a}}{bc},-\frac{bcz}
{\sqrt{a}};q)_\infty}\qhyp54{-q\sqrt{a},-\frac{qb}{\sqrt{a}},-\frac{qc}
{\sqrt{a}},-\frac{bcz}{\sqrt{a}},-\frac{q\sqrt{a}}{bcz}}
{-q,\pm q^\frac32,-\frac{qbc}{\sqrt{a}}}{q,q}\nonumber\\
&&\hspace{0.3cm}
+\frac{\vartheta(q^{n}\frac{bc}{\sqrt{a}},\frac{q^{n-1}}{\sqrt{a}z};q)
(\frac{\sqrt{a}}{b},\frac{\sqrt{a}}{c},\frac{bcz}{\sqrt{a}};q)_\infty}
{(\pm\frac{1}{\sqrt{q}},q\sqrt{a},\frac{\sqrt{a}}{bc},\frac{bcz}
{\sqrt{a}};q)_\infty}\left.\qhyp54{q\sqrt{a},\frac{qb}{\sqrt{a}}
,\frac{qc}{\sqrt{a}},\frac{bcz}{\sqrt{a}},\frac{q\sqrt{a}}{bcz}}
{-q,\pm q^\frac32,\frac{qbc}{\sqrt{a}}}{q,q}\right).
\end{eqnarray}
\end{thm}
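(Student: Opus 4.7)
The strategy is to specialize the free parameter $h$ in the five-term transformation of Theorem \ref{thm52} so that the fifth summand vanishes, producing a four-term identity. The fifth summand in Theorem \ref{thm52} is proportional to the theta-function product $\vartheta(h/z, h\,qa/(bc); q)$, and since $\vartheta(q^k; q) = 0$ for every $k \in \mathbb{Z}$ (a consequence of the Jacobi triple product representation \eqref{tfdef}, in which the factor $(q^k;q)_\infty$ or $(q^{1-k};q)_\infty$ appears and vanishes), it suffices to choose $h$ so that one of the two arguments of this theta function is an integer power of $q$. The two natural choices are $h = q^n z^{-1}$, which eliminates the fifth term via its first theta argument, and $h = q^{n-1}bc/a$, which eliminates it via its second theta argument. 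These two choices produce the two displayed four-term transformations.

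After performing either substitution, the four surviving summands retain their $h$-dependence only through the overall denominator prefactor $\vartheta(h, h\,qa/(bcz); q)$ and through the numerator theta functions $\vartheta(\pm h\sqrt{qa}, \pm h\sqrt{qa}/(bcz); q)$ and $\vartheta(\pm hq\sqrt{a}, \pm h\sqrt{a}/(bcz); q)$. I would then rewrite each of these theta functions, now evaluated at arguments of the form $q^n\cdot(\text{something})$, using the basic shift identity for modified theta functions implicit in \eqref{infneg}, namely $\vartheta(q^k w;q) = (-w)^{-k}q^{-\binom{k}{2}}\vartheta(w;q)$. Simultaneously, any infinite $q$-shifted factorials whose arguments are shifted by $q^n$ are rewritten via \eqref{infPochdefn} and \eqref{infPoch2}. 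The end result, in each of the four terms, is an expression in which the only $h$-dependent pieces that remain are the clean theta-function ratios appearing in the final statement.

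The main obstacle will be the combinatorial bookkeeping: one must track the $n$-dependent monomial factors contributed by each of the four surviving terms and verify that they cancel exactly against the corresponding factors produced by the common prefactor $1/\vartheta(h, h\,qa/(bcz);q)$. Because the four terms come in paired $\pm$ combinations (reflecting the symmetry under $\sqrt{a}\mapsto-\sqrt{a}$ and $\sqrt{qa}\mapsto-\sqrt{qa}$ in Theorem \ref{thm52}), the sign contributions from the shift identity require particular attention. However, the required pattern of cancellation is precisely the same one that appeared in the two preceding four-term reductions established for the nonterminating well-poised ${}_3\phi_2$, so essentially the same bookkeeping template applies. Once the simplifications are completed for each of the two choices of $h$, one obtains the two identities claimed in the theorem.
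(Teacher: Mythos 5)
Your overall strategy is the right one and is the same as the paper's: in the five-term transformation of Theorem \ref{thm52} the only term whose numerator theta factor is free of $\pm\sqrt{a}$ is the last one, proportional to $\vartheta(\frac{h}{z},h\frac{qa}{bc};q)$, and one kills it by choosing $h$ so that one of these two arguments lies in $\Upsilon_q$. However, your first choice of $h$ is wrong: with $h=q^nz^{-1}$ the first argument becomes $h/z=q^nz^{-2}$, which is not a power of $q$, so the fifth term does \emph{not} vanish and the first displayed identity does not follow. The choice that works is $h=q^nz$ (then $h/z=q^n$), and it is also the choice that reproduces the stated prefactor, since $\vartheta(h,h\frac{qa}{bcz};q)\big|_{h=q^nz}=\vartheta(q^nz,q^{n+1}\frac{a}{bc};q)$ and the first-term theta becomes $\vartheta(q^{n+\frac12}z\sqrt{a},q^{n+\frac12}\frac{\sqrt{a}}{bc};q)$, exactly as in the theorem. (The sentence in the paper preceding the theorem contains the same slip $h=q^nz^{-1}$, but its proof and the formulas themselves use $h=q^nz$.) Your second choice $h=q^{n-1}\frac{bc}{a}$ is correct, since then $h\frac{qa}{bc}=q^n$, and it yields the second displayed identity with prefactor $\vartheta(q^{n-1}\frac{bc}{a},\frac{q^n}{z};q)$.

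A secondary mismatch: your simplification plan overshoots what the target statement requires. In Theorem \ref{thm52} the parameter $h$ enters only through theta functions, never through the infinite $q$-shifted factorials, so there is nothing to rewrite via \eqref{infPochdefn} or \eqref{infPoch2}, and no $n$-dependent monomial cancellation of the ${}_3\phi_2$-type is needed; indeed the stated four-term formulas deliberately retain $q^n$, $q^{n\pm\frac12}$, $q^{n+1}$ inside the thetas. (If you did apply the quasi-periodicity $\vartheta(q^kw;q)=(-w)^{-k}q^{-\binom{k}{2}}\vartheta(w;q)$ throughout, you would obtain an equivalent but differently normalized identity, not the one stated.) The only bookkeeping actually used in the paper's proof beyond direct substitution is the elementary identity $(-1;q)_\infty=2(-q;q)_\infty$, which extracts the overall factor $2(-q;q)_\infty$ from the $(-1,\pm\sqrt{q},\ldots;q)_\infty$ and $(-1,\pm\frac{1}{\sqrt{q}},\ldots;q)_\infty$ denominators of the four surviving terms. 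With the corrected value $h=q^nz$ and this observation, your argument goes through.
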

\begin{proof}
Start by inserting $h\in\{q^nz,q^{n-1}\frac{bc}{a}\}$
respectively in Theorem \ref{thm52}.
This forces the last term to vanish
producing a four-term transformation
for a nonterminating very-well-poised ${}_5W_4$ with
arbitrary argument $z$.
Note that we have used the identity $(-1;q)_\infty=2(-q;q)_\infty$.
This completes the proof.
\end{proof}

\begin{rem}
Note that one can also choose 
%\begin{equation}
$
h\in \pm q^n\left\{\frac{1}{\sqrt{qa}},\frac{bcz}{\sqrt{a}},\frac{1}{q\sqrt{a}},\frac{bcz}{\sqrt{a}}\right\},$
%\end{equation}
in Theorem
\ref{thm52} with $n\in\Z$ and this will also 
produce four-term transformation formulas
for nonterminating 
very-well-poised ${}_5W_4$. However, we leave
the representation of these transformation
formulas to the reader.
\end{rem}

\section{Three and four-term transformations for the nonterminating very-well-poised ${}_8W_7$}

By starting with Bailey's transformation
of a sum of two nonterminating balanced
${}_4\phi_3$ basic hypergeometric functions
expressed as a very-well-poised ${}_8W_7$ we derive an
integral representation for the nonterminating very-well-poised ${}_8W_7$.
\begin{thm}
\label{thm61}
Let $q\in\CCdag$, $a,b,c,d,e,f,h\in\CCast$,
$\sigma\in(0,\infty)$, 
%$\frac{qa}{def},\frac{qa}{b},\frac{qa}{c},\frac{qa}{d},\frac{qa}{e},\frac{qa}{f}\not\in\Omega_q$,
%$h,h\frac{def}{qa}\not\in\Upsilon_q$, 
$z=\expe^{i\psi}$ such that $|q^2a^2|<|bcdef|$,
{$h\sqrt{\frac{def}{qa}}\not\in\Upsilon_q$}. Then, one has the 
following integral representation 
for a nonterminating very-well-poised ${}_8W_7$:
\begin{eqnarray}
&&\hspace{-0.8cm}\Whyp87{a}{b,c,d,e,f}{q,\frac{q^2a^2}{bcdef}}=\frac{(q,qa,\frac{qa}{bc},\frac{qa}{de},\frac{qa}{df},\frac{qa}{ef},d,e,f;q)_\infty}
{2\pi\vartheta(h,
%\frac{q}{h},
h\frac{def}{qa}
%,\frac{q}{h}
;q)(
%\frac{qa}{def},
\frac{qa}{b},\frac{qa}{c},\frac{qa}{d},\frac{qa}{e},\frac{qa}{f};q)_\infty}\nonumber\\
&&\hspace{1.5cm}\times\int_{-\pi}^\pi
\frac{((h\sqrt{\frac{def}{qa}},\frac{q}{h}\sqrt{\frac{qa}{def}})\frac{\sigma}{z},(h\sqrt{\frac{def}{qa}},\frac{q}{h}\sqrt{\frac{qa}{def}},\frac{(qa)^\frac32}{b\sqrt{def}},\frac{(qa)^\frac32}{c\sqrt{def}})\frac{z}{\sigma};q)_\infty}
{((\sqrt{\frac{def}{qa}},\sqrt{\frac{qa}{def}})\frac{\sigma}{z},(\sqrt{\frac{qad}{ef}},\sqrt{\frac{qae}{df}},\sqrt{\frac{qaf}{de}},\frac{(qa)^\frac32}{bc\sqrt{def}})\frac{z}{\sigma};q)_\infty}
{\mathrm d}\psi,
\label{int8W7}
\end{eqnarray}
where the maximum modulus of the denominator factors in the integrand 
is less than unity and
we assume there are no vanishing denominator factors
(see Remark \ref{rem41}).
\end{thm}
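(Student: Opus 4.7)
The strategy mirrors the proofs of Theorem \ref{big3phi2thm} and Theorem \ref{thm51}: first express the nonterminating very-well-poised ${}_8W_7$ on the left-hand side as a symmetric sum, under the interchange $d_1\leftrightarrow d_2$, of two nonterminating balanced ${}_4\phi_3(q,q)$'s, then recognize this sum as an instance of $H({\bf a},{\bf c},{\bf d};q)$ from \eqref{Hdefn} and convert it to the claimed integral by invoking \eqref{qqform} of Theorem \ref{gascaru}. The required two-term decomposition is the classical Bailey transformation cf.~\cite[(III.36)]{GaspRah}, which expresses the ${}_8W_7$ at the balanced argument $q^2a^2/(bcdef)$ as precisely such a symmetric sum of two balanced ${}_4\phi_3$'s.

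With Bailey's formula in hand, I would take $A=2$, $C=4$, $D=2$ and choose ${\bf d}:=\{\sqrt{def/(qa)},\sqrt{qa/(def)}\}$, ${\bf a}:=\{(qa)^{3/2}/(b\sqrt{def}),(qa)^{3/2}/(c\sqrt{def})\}$, and ${\bf c}:=\{\sqrt{qad/(ef)},\sqrt{qae/(df)},\sqrt{qaf/(de)},(qa)^{3/2}/(bc\sqrt{def})\}$, together with $f=h$ in \eqref{qqform}. A short verification then gives $d_1d_2=1$, $d_1{\bf c}=\{d,e,f,qa/(bc)\}$, $d_1{\bf a}=\{qa/b,qa/c\}$, and $qd_1/d_2=def/a$, so that with $C-A-2=0$ the $H$-function in \eqref{Hdefn} reproduces Bailey's sum verbatim. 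Since $d_1/d_2=def/(qa)$, the theta prefactor on the right-hand side of \eqref{qqform} becomes $\vartheta(h,h\,def/(qa);q)$, matching the factor in the denominator of \eqref{int8W7}. What remains is to track the infinite $q$-shifted factorial prefactors arising from Bailey's transformation through this substitution, producing the long product $(q,qa,qa/(bc),qa/(de),qa/(df),qa/(ef),d,e,f;q)_\infty/(qa/b,qa/c,qa/d,qa/e,qa/f;q)_\infty$ that multiplies the integral.

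The main obstacle will be the bookkeeping of these prefactors: matching Bailey's original normalizations against those generated by $H$ and by \eqref{qqform} requires systematic cancellation among many infinite $q$-shifted factorials, so I would compute each side in turn and use \eqref{sq-1}, \eqref{sq-2} and \eqref{infneg} as needed to bring the two forms into coincidence. The convergence constraint $|q^2a^2|<|bcdef|$ is exactly the condition $|z|<1$ for the original basic hypergeometric series on the left, while the hypothesis $h\sqrt{def/(qa)}\not\in\Upsilon_q$ ensures that neither modified theta factor in the denominator vanishes, and the standing assumption that the maximum modulus of the integrand's denominator factors is less than unity guarantees convergence of the contour integral itself.
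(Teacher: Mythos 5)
Your proposal follows exactly the paper's own route: the paper also starts from Bailey's two-term expansion of the nonterminating very-well-poised ${}_8W_7$ into balanced ${}_4\phi_3(q,q)$'s (cited there as \cite[(17.9.16)]{NIST:DLMF}, the same identity as \cite[(III.36)]{GaspRah}) and applies Theorem \ref{gascaru} via \eqref{qqform} with precisely the sets ${\bf a}$, ${\bf c}$, ${\bf d}$ you specify, i.e.\ \eqref{acd8W7}. Your checks that $d_1d_2=1$, $d_1{\bf c}=\{d,e,f,qa/(bc)\}$, $d_1{\bf a}=\{qa/b,qa/c\}$, $qd_1/d_2=def/a$ and that the theta prefactor becomes $\vartheta(h,h\,def/(qa);q)$ confirm the identification, so the proposal is correct and essentially identical to the paper's proof.
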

\begin{proof}
We start with Bailey's transformation
of a nonterminating very-well-poised ${}_8W_7$
\cite[(17.9.16)]{NIST:DLMF}
\vspace{12pt}
\begin{eqnarray}
&&\hspace{-0.9cm}{}_8W_7\left(a;b,c,d,e,f;q,\frac{q^2a^2}{bcdef}\right)
=\frac{(\frac{qa}{de},\frac{qa}{df},\frac{qa}{ef},qa;q)_\infty}
{(\frac{qa}{def},\frac{qa}{d},\frac{qa}{e},\frac{qa}{f};q)_\infty}
\qhyp43{\frac{qa}{bc},d,e,f}{\frac{qa}{b},\frac{qa}{c},\frac{def}{a}}
{q,q}\nonumber\\&&\label{8W7to4ph3}
\hspace{3cm}+\frac{(\frac{q^2a^2}{bdef},\frac{q^2a^2}{cdef},\frac{qa}{bc},qa,d,e,f
;q)_\infty}
{(\frac{q^2a^2}{bcdef},\frac{def}{qa},\frac{qa}{b},\frac{qa}{c},\frac{qa}{d},\frac{qa}{e},\frac{qa}{f};q)_\infty}
\qhyp43{\frac{q^2a^2}{bcdef},\frac{qa}{de},\frac{qa}{df},\frac{qa}{ef}}
{\frac{q^2a^2}{bdef},\frac{q^2a^2}{cdef},\frac{q^2a}{def}}{q,q},
\end{eqnarray}
and applying Theorem \ref{gascaru} with
\begin{eqnarray}
&&\hspace{-0.7cm}{\bf a}:=\left\{\frac{(qa)^\frac32}{b\sqrt{def}},\frac{(qa)^\frac32}{c\sqrt{def}}\right\}\!,\,
{\bf c}:=\left\{\sqrt{\frac{qad}{ef}},
\sqrt{\frac{qae}{df}},\sqrt{\frac{qaf}{de}},\frac{(qa)^\frac32}{bc\sqrt{def}}\right\}\!,
\,{\bf d}:=\left\{\sqrt{\frac{def}{qa}},\sqrt{\frac{qa}{def}}\right\}\!,
\label{acd8W7}
\end{eqnarray}
completes the proof.
\end{proof}

{In the following, we will adopt Bailey's $W$ notation
for a nonterminating very-well-poised ${}_7F_6$
of argument unity (see for instance, \cite[p.~2]{Groenevelt2003})
\begin{equation}
\label{BaileyW}
W(a;b,c,d,e,f):=
\hyp76{a,\frac{a}{2}+1,b,c,d,e,f}
{\frac{a}{2},1+a\!-\!b,1+a\!-\!c,1+a\!-\!d,1+a\!-\!e,1+a\!-\!f}{1},
\end{equation}
which is absolutely convergent if 
\cite[(16.2.2)]{NIST:DLMF}
\begin{equation}
\Re(2a-(b+c+d+e+f)+2)>0.
\label{absconv}
\end{equation}
}

{
\begin{thm}
Let $a,b,c,d,e,f,\sigma\in\CC$,
$h\in\CC\setminus\Z$, $d\!+\!e\!+\!f\!-\!a\!-\!1,2\!+\!a\!-\!d\!-\!e\!-\!f,a\!-\!b\!+\!1,a\!-\!c\!+\!1,a\!-\!d\!+\!1,a\!-\!e\!+\!1,a\!-\!f\!+\!1\not\in-\N_0$.
Then
\begin{eqnarray}
&&\hspace{-0.3cm}W(a;b,c,d,e,f)=\frac{1}{2\pi}\Gamma(h,1\!-\!h,h\!+\!d\!+\!e\!+\!f\!-\!a\!-\!1,2\!+\!a\!-\!h\!-\!d\!-\!e\!-\!f)\nonumber\\
&&\hspace{0.4cm}\times
\frac{\Gamma(a\!-\!b\!+\!1,a\!-\!c\!+\!1,a\!-\!d\!+\!1,a\!-\!e\!+\!1,a\!-\!f\!+\!1)}
{\Gamma(a\!+\!1,a\!-\!b\!-\!c\!+\!1,a\!-\!d\!-\!e\!+\!1,a\!-\!d\!-\!f\!+\!1,a\!-\!e\!-\!f\!+\!1,d,e,f)}\nonumber\\
&&\hspace{1.0cm}\times \int_{-\infty}^\infty
\frac{
\Gamma(\frac{d+e+f-a-1}{2}\!+\!\sigma\!-\!ix,
\frac{a+1-d-e-f}{2}\!+\!\sigma\!-\!ix,
\frac32(a\!+\!1)\!-\!b\!-\!c\!-\!\frac{d+e+f}{2}\!-\!\sigma\!+\!ix
)}
{\Gamma(
h\!+\!\frac{d+e+f-a-1}{2}\!\mp\!\sigma\!\pm\! ix,
1\!-\!h\!+\!\frac{a+1-d-e-f}{2}\!\mp\!\sigma\!\pm\! ix
)}\nonumber\\
&&\hspace{2.3cm}\times\frac{\Gamma(\frac{a+1+f-d-e}{2}\!-\!\sigma\!+\!ix,
\frac{a+1+d-e-f}{2}\!-\!\sigma\!+\!ix,
\frac{a+1+e-d-f}{2}\!-\!\sigma\!+\!ix
)}{\Gamma(\frac32(a\!+\!1)\!-\!b\!-\!\frac{d+e+f}{2}\!+\!ix\!-\!\sigma,
\frac32(a\!+\!1)\!-\!c\!-\!\frac{d+e+f}{2}+ix\!-\!\sigma)}
\,\dd x.
\end{eqnarray}
\end{thm}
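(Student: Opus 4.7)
The plan is to obtain this representation as the $q \to 1^{-}$ limit of the $q$-Mellin--Barnes integral for the nonterminating very-well-poised ${}_8W_7$ given in Theorem \ref{thm61}, exactly parallel to how the theorem after Corollary \ref{cor17} was derived as the $q\to 1^{-}$ limit of its $q$-analogue.

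First I would apply the substitution $(a,b,c,d,e,f,h,\sigma,\expe^{i\psi}) \mapsto (q^{a},q^{b},q^{c},q^{d},q^{e},q^{f},q^{h},q^{\sigma},q^{ix})$ to the integral identity \eqref{int8W7}. The change of variables $z = q^{ix}$ converts $\dd\psi$ to $(-\log q)\,\dd x$ and maps the contour $[-\pi,\pi]$ in $\psi$ to $[\pi/\log q,-\pi/\log q]$ in $x$, consistent with the intermediate integrals used elsewhere in the paper. The argument $q^{2}a^{2}/(bcdef)$ of the ${}_8W_7$ becomes $q^{2+2a-b-c-d-e-f}$, which tends to unity as $q\to 1^{-}$ subject to the absolute convergence condition \eqref{absconv}.

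Next I would rewrite every infinite $q$-shifted factorial as a $q$-gamma function via \eqref{qgam}, namely $(q^{x};q)_\infty = (q;q)_\infty/[(1-q)^{x-1}\Gamma_q(x)]$. The two modified theta functions in the prefactor, $\vartheta(h;q)$ and $\vartheta(hdef/(qa);q)$, must first be expanded via $\vartheta(y;q) = (y,q/y;q)_\infty$, producing the $\Gamma(h,1-h,h+d+e+f-a-1,2+a-h-d-e-f)$ factors after the substitution. Finally, taking $q \to 1^{-}$ and using \eqref{limqgam} together with \eqref{qloglim} converts the left-hand side ${}_8W_7$ into Bailey's $W(a;b,c,d,e,f)$ of \eqref{BaileyW}, and dilates the contour $[\pi/\log q,-\pi/\log q]$ to the entire real line $(-\infty,\infty)$.

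The primary obstacle is the exponent bookkeeping: each infinite $q$-shifted factorial carries a factor of $(1-q)^{1-x}$ under conversion, and one must verify that these exponents cancel cleanly between numerator and denominator across the nine $\Gamma_q$'s in the prefactor, the eight $\Gamma_q$'s in the integrand, and the $(q;q)_\infty$ normalizations. Homogeneity of the ${}_8W_7$ identity guarantees this cancellation in principle, but writing it out requires care. A secondary technicality is the interchange of the $q\to 1^{-}$ limit with the integration over an expanding contour; this is standard and can be justified by Stirling-type decay of the $\Gamma$-quotient along vertical lines, as in the analogous $q\to 1^{-}$ limits taken earlier in the paper.
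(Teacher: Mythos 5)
Your proposal follows essentially the same route as the paper's own proof: substitute $(a,b,c,d,e,f,h,\sigma,\expe^{i\psi})\mapsto(q^a,q^b,q^c,q^d,q^e,q^f,q^h,q^\sigma,q^{ix})$ into the $q$-Mellin--Barnes representation \eqref{int8W7}, rewrite the infinite $q$-shifted factorials (and the expanded theta functions) as $q$-gamma functions, and let $q\to1^{-}$ using \eqref{limqgam} and \eqref{qloglim} to obtain $W(a;b,c,d,e,f)$ and the real-line integral. Your attention to the $(1-q)$-exponent bookkeeping and the contour dilation is a welcome amplification of details the paper leaves implicit, but the argument is the same.
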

}
{
\begin{proof}
First use the map $(a,b,c,d,e,f,h,\expe^{i\psi})\mapsto(q^a,q^b,q^c,q^d,q^e,q^f,q^h,q^{ix})$
in \eqref{int8W7}. This converts 
the ${}_8W_7$ to 
\begin{eqnarray}
&&\Whyp87{q^a}{q^b,q^c,q^d,q^e,q^f}{q,q^{2a+2-b-c-d-e-f}},
\end{eqnarray}
which in the limit as $q\to 1^{-}$ becomes
$W(a;b,c,d,e,f)$.
On the right-hand side of \eqref{int8W7}, the infinite
$q$-shifted factorials can be converted
to $q$-gamma functions using 
\cite[(I.35)]{GaspRah}.
Upon taking the limit as
$q\to 1^{-}$ using 
This completes the proof.
\end{proof}
}

\noindent Now we present a theorem which 
gives a representation for a nonterminating very-well-poised ${}_8W_7$ given as a sum of four balanced ${}_4\phi_3(q)$'s.

\begin{thm}
Let $q\in\CCdag$, $a,b,c,d,e,f,h\in\CCast$
%,$\frac{qa}{b},\frac{qa}{c},\frac{qa}{d},\frac{qa}{e},\frac{qa}{f}\not\in\Omega_q$,
%$h,\frac{hdef}{qa}\not\in\Upsilon_q$ {
such that $|q^2a^2|<|bcdef|$,
{and we assume there are no vanishing denominator factors
(see Remark \ref{rem41}).}
Then
\begin{eqnarray}
&&\hspace{-1.6cm}\Whyp87{a}{b,c,d,e,f}{q,\frac{q^2a^2}{bcdef}}=
\frac{(qa;q)_\infty}
{\vartheta(h,h\frac{def}{qa};q)
(\frac{qa}{b},\frac{qa}{c},\frac{qa}{d},\frac{qa}{e},\frac{qa}{f};q)_\infty}\nonumber\\
&&\hspace{0.5cm}\times\Biggl(
\frac{\vartheta(h\frac{qa}{bc},h\frac{bcdef}{q^2a^2};q)(\frac{qa}{de},\frac{qa}{df},\frac{qa}{ef},b,c,d,e,f;q)_\infty}
{(\frac{q^2a^2}{bcdef},\frac{bcd}{qa},\frac{bce}{qa},\frac{bcf}{qa};q)_\infty}
\qhyp43{\frac{q^2a^2}{bcdef},\frac{qa}{bc},\frac{q}{b},
\frac{q}{c}
}{\frac{q^2a}{bcd},\frac{q^2a}{bce},\frac{q^2a}{bcf}}{q,q}\nonumber\\
&&\hspace{1.0cm}\!+\!\frac{\vartheta(hf,h\frac{de}{qa};q)(\frac{qa}{bc},\frac{qa}{bf},\frac{qa}{cf},\frac{qa}{df},\frac{qa}{ef},d,e;q)_\infty}
{(\frac{qa}{bcf},\frac{d}{f},\frac{e}{f};q)_\infty}
\qhyp43{\frac{qa}{de},\frac{bf}{a},\frac{cf}{a},f}{\frac{bcf}{a},\frac{qf}{d},\frac{qf}{e}}{q,q}
\nonumber\\
&&\hspace{1.0cm}+\frac{\vartheta(hd,h\frac{ef}{qa};q)(\frac{qa}{bc},\frac{qa}{bd},\frac{qa}{cd},\frac{qa}{de},\frac{qa}{df},e,f;q)_\infty}
{(\frac{qa}{bcd},\frac{e}{d},\frac{f}{d};q)_\infty}
\qhyp43{\frac{qa}{ef},\frac{bd}{a},\frac{cd}{a},d}{\frac{bcd}{a},\frac{qd}{e},\frac{qd}{f}}{q,q}
\nonumber\\
&&\hspace{1.0cm}+\frac{\vartheta(he,h\frac{df}{qa};q)(\frac{qa}{bc},\frac{qa}{be},\frac{qa}{ce},\frac{qa}{de},\frac{qa}{ef},d,f;q)_\infty}
{(\frac{qa}{bce},\frac{d}{e},\frac{f}{e};q)_\infty}
\qhyp43{\frac{qa}{df},\frac{be}{a},\frac{ce}{a},e}{\frac{bce}{a},\frac{qe}{d},\frac{qe}{f}}{q,q}\Biggr).
\label{8W7fourterm}
\end{eqnarray}
\end{thm}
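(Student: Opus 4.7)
The plan is to apply the $J$-form evaluation of the $q$-Mellin--Barnes integral (equation \eqref{jjform} of Theorem \ref{gascaru}) to the integral representation of the nonterminating ${}_8W_7$ established in Theorem \ref{thm61}. The crucial observation is that in the derivation of Theorem \ref{thm61} we already used the $H$-form (equation \eqref{qqform}) with the specific choice of parameter sets ${\bf a}, {\bf c}, {\bf d}$ displayed in \eqref{acd8W7}. Since here $C=4$, $A=2$, $D=2$, we have $C = A+2$, so the $J$-form of Theorem \ref{gascaru} is available and will produce a sum of exactly $C=4$ terms, each involving a nonterminating ${}_{A+2}\phi_{C-1} = {}_4\phi_3$ of argument $q$, matching the structure of the theorem.

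First I would rewrite Theorem \ref{thm61} in the form
\[
\Whyp87{a}{b,c,d,e,f}{q,\tfrac{q^2a^2}{bcdef}}=\frac{(qa,\tfrac{qa}{bc},\tfrac{qa}{de},\tfrac{qa}{df},\tfrac{qa}{ef},d,e,f;q)_\infty}{\vartheta(h,h\tfrac{def}{qa};q)(\tfrac{qa}{b},\tfrac{qa}{c},\tfrac{qa}{d},\tfrac{qa}{e},\tfrac{qa}{f};q)_\infty}\cdot\frac{J({\bf a},{\bf c},{\bf d};h,q)}{1},
\]
invoking equality of \eqref{qqform} and \eqref{jjform} with auxiliary parameter $f \mapsto h$. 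Next, I would substitute the sets ${\bf a}, {\bf c}, {\bf d}$ from \eqref{acd8W7} directly into \eqref{Jdefn}, iterating over $c_1 = \sqrt{qad/(ef)}$, $c_2 = \sqrt{qae/(df)}$, $c_3 = \sqrt{qaf/(de)}$, and $c_4 = (qa)^{3/2}/(bc\sqrt{def})$. For each $k$, one checks that $c_k d_1$ and $c_k d_2$ collapse to monomials in $a,b,c,d,e,f$: e.g.~$c_4 d_1 = qa/(bc)$ and $c_4 d_2 = (qa)^2/(bc\,def)$, yielding the theta-function factor $\vartheta(h\tfrac{qa}{bc},h\tfrac{bcdef}{q^2a^2};q)$ that matches the first summand of \eqref{8W7fourterm}; similarly $c_3$ gives $(hf, h\tfrac{de}{qa})$, $c_1$ gives $(hd, h\tfrac{ef}{qa})$, $c_2$ gives $(he, h\tfrac{df}{qa})$, exactly matching the second, third, and fourth summands.

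The argument of each ${}_4\phi_3$ in \eqref{Jdefn} simplifies to $q$ because $C-A-2 = 0$ and a direct computation using $d_1 d_2 = 1$ shows $a_1 a_2 / (c_1 c_2 c_3 c_4) = 1$; hence the argument $q(qc_k)^{C-A-2}a_1\cdots a_A/(d_1 d_2 c_1\cdots c_C) = q$ for every $k$. The remaining work is the routine but lengthy bookkeeping of the ratios $(c_k {\bf d}, {\bf c}_{[k]}/c_k;q)_\infty$ in each denominator and $({\bf a}/c_k;q)_\infty$ in each numerator, and then reading off the parameters of the ${}_4\phi_3$'s as $c_k{\bf d}$ and $qc_k/{\bf a}$ (top) versus $qc_k/{\bf c}_{[k]}$ (bottom).

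The main obstacle I anticipate is precisely this bookkeeping: the eight square-root-type entries in ${\bf a}, {\bf c}, {\bf d}$ multiply and divide in ways that, while individually rational in $a,b,c,d,e,f$, require careful tracking of signs of exponents to verify term-by-term agreement with \eqref{8W7fourterm}. A useful sanity check at the end is that the four sums on the right-hand side are invariant under the permutations $(d,e,f)\mapsto$ (any permutation), which is manifest in three of the four terms (generated by $c_1,c_2,c_3$) and must be confirmed compatible with the fourth (generated by $c_4$). Once this verification is complete, equating the two expressions for the integral provided by \eqref{qqform}--\eqref{jjform} and dividing by the prefactor above yields \eqref{8W7fourterm}.
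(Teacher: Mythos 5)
Your proposal is correct and takes essentially the same route as the paper, whose proof of \eqref{8W7fourterm} is exactly the application of \eqref{jjform} with the sets ${\bf a},{\bf c},{\bf d}$ of \eqref{acd8W7} to the integral of Theorem \ref{thm61}. Your bookkeeping checks out: the pairs $(c_kd_1,c_kd_2)$ give the stated theta factors, $qc_k/{\bf a}$, $qc_k/{\bf c}_{[k]}$ give the ${}_4\phi_3$ parameters, and since $C-A-2=0$, $d_1d_2=1$ and $a_1a_2=c_1c_2c_3c_4$ the argument is $q$ in every term.
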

\begin{proof}
Applying ${\bf a}$, ${\bf c}$ and ${\bf d}$ in
\eqref{acd8W7} to \eqref{jjform}
produces the result.
\end{proof}

\medskip

\begin{rem}
If one takes either $d,e,f$ equal to 
$q^{-n}$ for some $n\in\N_0$ then 
\eqref{8W7fourterm} becomes 
Watson's $q$-analogue of Whipple's 
theorem \cite[(17.9.15)]{NIST:DLMF}.
\end{rem}

\begin{rem}
Also, we were 
trying to get Bailey's transformation
of a nonterminating very-well-poised ${}_8W_7$ as a 
sum of two balanced ${}_4\phi_3$'s 
\cite[(17.9.16)]{NIST:DLMF}
as a limit case of the \eqref{8W7fourterm}, but we weren't able to.
Note also we weren't able to express
\eqref{8W7fourterm} as a sum of two nonterminating very-well-poised ${}_8W_7$'s (a la \cite[(III.37)]{GaspRah} either, although originally we had high hopes.
\end{rem}

As the parameter $h$ is free, one may choose 
$h=q^nbc/(qa)$ or $h=q^{n+2}a^2/(bcdef)$ in \eqref{8W7fourterm}. Then the first term in \eqref{8W7fourterm} vanishes
and you are left with a symmetric 
sum of three 
${}_4\phi_3$'s as a representation of a nonterminating very-well-poised ${}_8W_7$, namely the following theorem.

\begin{rem}
One might also consider the limit of
\eqref{8W7fourterm}
as $h\to\infty$. However, this limit produces a
multiplicative elliptic
function in proportion, which is doubly periodic on the entire complex plane. Therefore this limit does not exist.
\end{rem}

\begin{thm}
Let $n\in\Z$, $q\in\CCdag$, $a,b,c,d,e,f\in\CCast$
%, $\frac{qa}{b},\frac{qa}{c},\frac{qa}{d},\frac{qa}{e},\frac{qa}{f}\not\in\Omega_q$,
%$\frac{bc}{qa},\frac{bcdef}{q^2a^2}\not\in\Upsilon_q$ {
such that $|q^2a^2|<|bcdef|$,
{and we assume there are no vanishing denominator factors
(see Remark \ref{rem41}).}
Then
\begin{eqnarray}
&&\hspace{-1.0cm}\Whyp87{a}{b,c,d,e,f}{q,\frac{q^2a^2}{bcdef}}=
\frac{(qa,\frac{qa}{bc};q)_\infty}
{\vartheta(\frac{q^{n-1}bc}{a},\frac{q^{n-2}bcdef}{a^2};q)
(\frac{qa}{b},\frac{qa}{c},\frac{qa}{d},\frac{qa}{e},\frac{qa}{f};q)_\infty}\nonumber\\
&&\times\II{d;e,f} \frac{\vartheta(\frac{q^{n-1}bcd}{a},\frac{q^{n-2}bcef}{a^2};q)(\frac{qa}{bd},\frac{qa}{cd},\frac{qa}{de},\frac{qa}{df},e,f;q)_\infty}
{(\frac{qa}{bcd},\frac{e}{d},\frac{f}{d};q)_\infty}
\qhyp43{\frac{qa}{ef},\frac{bd}{a},\frac{cd}{a},d}{\frac{bcd}{a},\frac{qd}{e},\frac{qd}{f}}{q,q}.
%&&\hspace{0.5cm}\times\Biggl(
%\frac{\vartheta(\frac{bcf}{qa},\frac{bcde}{q^2a^2};q)(\frac{qa}{bf},\frac{qa}{cf},\frac{qa}{df},\frac{qa}{ef},d,e;q)_\infty}
%{(\frac{qa}{bcf},\frac{d}{f},\frac{e}{f};q)_\infty}
%\qhyp43{\frac{qa}{de},\frac{bf}{a},\frac{cf}{a},f}{\frac{bcd}{a},\frac{qf}{d},\frac{qf}{e}}{q,q}
%\nonumber\\
%&&\hspace{1.0cm}+\frac{\vartheta(\frac{bcd}{qa},\frac{bcef}{q^2a^2};q)(\frac{qa}{bd},\frac{qa}{cd},\frac{qa}{de},\frac{qa}{df},e,f;q)_\infty}
%{(\frac{qa}{bcd},\frac{e}{d},\frac{f}{d};q)_\infty}
%\qhyp43{\frac{qa}{ef},\frac{bd}{a},\frac{cd}{a},d}{\frac{bcd}{a},\frac{qd}{e},\frac{qd}{f}}{q,q}
%\nonumber\\
%&&\hspace{1.0cm}+\frac{\vartheta(\frac{bce}{qa},\frac{bcdf}{q^2a^2};q)(\frac{qa}{be},\frac{qa}{ce},\frac{qa}{de},\frac{qa}{ef},d,f;q)_\infty}
%{(\frac{qa}{bce},\frac{d}{e},\frac{f}{e};q)_\infty}
%\qhyp43{\frac{qa}{df},\frac{be}{a},\frac{ce}{a},e}{\frac{bce}{a},\frac{qe}{d},\frac{qe}{f}}{q,q}\Biggr),
\label{8W7threet}
\end{eqnarray}
\end{thm}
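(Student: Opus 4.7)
The plan is to specialize the free parameter $h$ in the four-term transformation \eqref{8W7fourterm} so that the first of the four terms vanishes identically, and then to recognize that the three surviving terms are permutations of one another under the symmetry $d\leftrightarrow e$ and $d\leftrightarrow f$. This matches exactly the structure of the symmetric sum notation $\II{d;e,f}$ introduced in the preliminaries.

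First I would inspect the first term on the right-hand side of \eqref{8W7fourterm}. Its numerator carries the theta factor $\vartheta(h\tfrac{qa}{bc},h\tfrac{bcdef}{q^2a^2};q)$. Since $\vartheta(q^m;q)=0$ for every $m\in\Z$, choosing $h$ so that either argument equals an integer power of $q$ kills this whole term. The two natural choices are $h=q^{n-1}bc/a$ (making the first argument $q^n$) and $h=q^{n+2}a^2/(bcdef)$ (making the second argument $q^n$). I would carry out the computation with $h=q^{n-1}bc/a$ and note that the other choice produces the same final identity by the same mechanism.

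Second, I would substitute $h=q^{n-1}bc/a$ into the overall denominator $\vartheta(h,h\tfrac{def}{qa};q)$, getting $\vartheta(q^{n-1}bc/a,q^{n-2}bcdef/a^2;q)$, which matches exactly the prefactor denominator on the right-hand side of \eqref{8W7threet}. Third, I would examine the three remaining terms, namely the ones carrying the theta factors $\vartheta(hf,h\tfrac{de}{qa};q)$, $\vartheta(hd,h\tfrac{ef}{qa};q)$, and $\vartheta(he,h\tfrac{df}{qa};q)$. Comparing them side by side, each is obtained from the $d$-term by swapping $d$ with the distinguished parameter ($f$ or $e$); the $q$-shifted factorial ratio and the parameters of the balanced ${}_4\phi_3$ transform consistently with this swap. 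Writing the $d$-term explicitly with $h=q^{n-1}bc/a$ substituted (so that $\vartheta(hd,h\tfrac{ef}{qa};q)$ becomes $\vartheta(q^{n-1}bcd/a,q^{n-2}bcef/a^2;q)$) and then summing over the symmetry using the $\II{d;e,f}$ notation yields precisely \eqref{8W7threet}.

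The only nontrivial bookkeeping step — and the one where a careful check is needed — is verifying the symmetric structure of the three surviving terms: one must confirm that the $d\leftrightarrow e$ and $d\leftrightarrow f$ interchanges map the $d$-summand into the $e$-summand and $f$-summand of \eqref{8W7fourterm} \emph{as written}, including the asymmetric-looking denominator factors $(e/d,f/d;q)_\infty$ and the $qd/e,qd/f$ entries of the ${}_4\phi_3$. This is a routine but attention-demanding comparison; once it is in place, the proof is complete simply by citing \eqref{8W7fourterm} and noting the vanishing of its first term.
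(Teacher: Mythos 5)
Your proposal is correct and coincides with the paper's own proof: set $h=q^{n-1}bc/a$ in \eqref{8W7fourterm}, so the first term vanishes because $\vartheta(q^n;q)=0$, and the three surviving terms assemble into the symmetric sum $\II{d;e,f}$ giving \eqref{8W7threet}. (The alternative choice $h=q^{n+2}a^2/(bcdef)$ is treated in the paper as a separate theorem, not as a second route to this one.)
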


\begin{proof}
Choose $h=q^{n-1}bc/a$ in \eqref{8W7fourterm}. 
This causes the first term to vanish
and one is left with a symmetric 
sum of three nonterminating balanced ${}_4\phi_3$'s as a representation of the nonterminating very-well-poised ${}_8W_7$, namely the following theorem.
\end{proof}

Alternatively one could have chosen $h=q^2a^2/(bcdef)$ in \eqref{8W7fourterm}. This produces the following 
result.

\begin{thm}
Let $n\in\Z$, $q\in\CCdag$, $a,b,c,d,e,f\in\CCast$
%, $\frac{qa}{b},\frac{qa}{c},\frac{qa}{d},\frac{qa}{e},\frac{qa}{f}\not\in\Omega_q$,
%$\frac{q^2a^2}{bcdef},\frac{qa}{bc}\not\in\Upsilon_q$ {
such that $|q^2a^2|<|bcdef|$,
{and we assume there are no vanishing denominator factors
(see Remark \ref{rem41}).}
Then
\begin{eqnarray}
&&\hspace{-1.6cm}\Whyp87{a}{b,c,d,e,f}{q,\frac{q^2a^2}{bcdef}}=
\frac{(qa,\frac{qa}{bc};q)_\infty %{\sf a}(b,c,d,e,f;q)
}
{\vartheta(\frac{q^{n+2}a^2}{bcdef},\frac{q^{n+1}a}{bc};q)
(\frac{qa}{b},\frac{qa}{c},\frac{qa}{d},\frac{qa}{e},\frac{qa}{f};q)_\infty}\nonumber\\
&&\times \II{d;e,f}\frac{\vartheta(\frac{q^{n+2}a^2}{bcef},\frac{q^{n+1}a}{bcd};q)(\frac{qa}{bd},\frac{qa}{cd},\frac{qa}{de},\frac{qa}{df},e,f;q)_\infty}
{(\frac{qa}{bcd},\frac{e}{d},\frac{f}{d};q)_\infty}
\qhyp43{\frac{qa}{ef},\frac{bd}{a},\frac{cd}{a},d}{\frac{bcd}{a},\frac{qd}{e},\frac{qd}{f}}{q,q}.
%\\
%&&\hspace{0.5cm}\times\Biggl(\frac{\vartheta(\frac{q^2a^2}{bcde},\frac{qa}{bcf};q(\frac{qa}{bc},\frac{qa}{bf},\frac{qa}{cf},\frac{qa}{df},\frac{qa}{ef},d,e;q)_\infty}
%{(\frac{qa}{bcf},\frac{d}{f},\frac{e}{f};q)_\infty}
%\qhyp43{\frac{qa}{de},\frac{bf}{a},\frac{cf}{a},f}{\frac{bc\moqo{f}}{a},\frac{qf}{d},\frac{qf}{e}}{q,q}
%\nonumber\\
%&&\hspace{1.0cm}+\frac{\vartheta(\frac{q^2a^2}{bcef},\frac{qa}{bcd};q)(\frac{qa}{bc},\frac{qa}{bd},\frac{qa}{cd},\frac{qa}{de},\frac{qa}{df},e,f;q)_\infty}
%{(\frac{qa}{bcd},\frac{e}{d},\frac{f}{d};q)_\infty}
%\qhyp43{\frac{qa}{ef},\frac{bd}{a},\frac{cd}{a},d}{\frac{bcd}{a},\frac{qd}{e},\frac{qd}{f}}{q,q}
%\nonumber\\
%&&\hspace{1.0cm}+\frac{\vartheta(\frac{q^2a^2}{bcdf},
%\frac{qa}{bce};q)(\frac{qa}{bc},\frac{qa}{be},\frac{qa}{ce},\frac{qa}{de},\frac{qa}{ef},d,f;q)_\infty}
%{(\frac{qa}{bce},\frac{d}{e},\frac{f}{e};q)_\infty}
%\qhyp43{\frac{qa}{df},\frac{be}{a},\frac{ce}{a},e}{\frac{bce}{a},\frac{qe}{d},\frac{qe}{f}}{q,q}\Biggr),
\label{8W7fourterm-1}
\end{eqnarray}
\end{thm}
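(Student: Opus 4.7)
The plan is to specialize the free parameter $h$ in \eqref{8W7fourterm} so that its first term vanishes identically, and then to collect the surviving three terms into the symmetric form $\II{d;e,f}(\cdots)$ displayed in the statement. This parallels the proof of \eqref{8W7threet}, where the complementary choice $h = q^{n-1}bc/a$ annihilated the same numerator theta function via its \emph{other} argument.

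First I set $h = q^{n+2}a^2/(bcdef)$ with $n \in \Z$. The first term on the right-hand side of \eqref{8W7fourterm} carries the numerator factor $\vartheta(h\tfrac{qa}{bc}, h\tfrac{bcdef}{q^2a^2}; q)$, and with this choice one has $h \cdot \tfrac{bcdef}{q^2a^2} = q^n$, so $\vartheta(q^n;q) = 0$ annihilates this term.

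Next I read off the substitution in the three surviving terms. The overall denominator $\vartheta(h, h\tfrac{def}{qa}; q)$ becomes $\vartheta(\tfrac{q^{n+2}a^2}{bcdef}, \tfrac{q^{n+1}a}{bc}; q)$, matching the stated prefactor. In the term whose balanced ${}_4\phi_3$ features $d$ in its top row, the numerator factor $\vartheta(hd, h\tfrac{ef}{qa}; q)$ simplifies to $\vartheta(\tfrac{q^{n+2}a^2}{bcef}, \tfrac{q^{n+1}a}{bcd}; q)$; the $e$ and $f$ analogues arise from $\vartheta(he, h\tfrac{df}{qa}; q)$ and $\vartheta(hf, h\tfrac{de}{qa}; q)$ by the interchanges $d \leftrightarrow e$ and $d \leftrightarrow f$. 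The common infinite $q$-shifted factorial $(\tfrac{qa}{bc};q)_\infty$ appearing in each of the three surviving numerators is pulled out of the sum into the global prefactor, which accounts for the extra $(\tfrac{qa}{bc};q)_\infty$ in the stated form relative to the prefactor of \eqref{8W7fourterm}. The basic hypergeometric series themselves do not involve $h$ and are unaffected by the specialization.

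Since the three surviving terms are mapped to one another under the transpositions $(d,e)$ and $(d,f)$, they assemble into $\II{d;e,f}$ applied to the $d$-th term, yielding the stated identity. The only potential difficulty is careful bookkeeping of the six theta-function arguments after the substitution, but no identity beyond $\vartheta(q^n;q) = 0$ and the symmetry already visible in \eqref{8W7fourterm} is required.
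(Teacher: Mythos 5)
Your proposal is correct and follows the paper's own route: the paper likewise proves this theorem by substituting $h=q^{n+2}a^2/(bcdef)$ into \eqref{8W7fourterm}, so that $\vartheta(h\,bcdef/(q^2a^2);q)=\vartheta(q^n;q)=0$ kills the first term, and then simplifying the remaining three terms into the symmetric $\II{d;e,f}$ form. Your bookkeeping of the theta arguments and the extraction of the common $(qa/(bc);q)_\infty$ factor is accurate.
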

%\moro{[RCS: Insert "I" symbol here?]}

\begin{proof}
Replace $h=q^{n+2}a^2/(bcdef)$ in \eqref{8W7fourterm} and simplifying completes the proof.
\end{proof}

\begin{rem}
Note that one can also choose for $n\in\Z$,
%\[
$h\in q^n\left\{\frac{1}d,\frac{1}e,\frac{1}f,\frac{qa}{de},\frac{qa}{df}
,\frac{qa}{ef}\right\},$
%\]
and then the four-term representation of the nonterminating very-well-poised 
${}_8W_7$ in \eqref{8W7fourterm} reduces to a three-term transformation. 
However, these representations are not symmetric and we leave their 
depictions to the reader.
\end{rem}

\section{{Summation and transformation formulas for nonterminating balanced very-well-poised ${}_8W_7$}}

By starting with Bailey's three-term transformation formula for a nonterminating very-well-poised ${}_8W_7$, we are able to prove a generalized
$q$-beta integral which will be useful 
to generate further transformation and summation formulas in the special case
where the nonterminating very-well-poised ${}_8W_7$ are also balanced.

\begin{thm}
\label{dintthm}
Let $q\in\CCdag$, $\sigma\in(0,\infty)$, $a,b,c,d,e,h\in\CCast$,
{$h, h\sqrt{\frac{a}{b}}\not\in\Upsilon_q,$}
%\moqo{$a,b,c,d,e,h,h\frac{a}{b}\not\in\Upsilon_q$,}
$z=\expe^{i\psi}$. Then, one has the 
following $q$-Mellin--Barnes integral:
\begin{eqnarray}
&&\hspace{-0.5cm}\int_{-\pi}^\pi \frac{((h\sqrt{\frac{a}{b}},\frac{q}{h}\sqrt{\frac{b}{a}})\frac{\sigma}{z},(h\sqrt{\frac{a}{b}},\frac{q}{h}\sqrt{\frac{b}{a}},\pm\sqrt{b},\frac{q}{c}\sqrt{ab},\frac{q}{d}\sqrt{ab},\frac{q}{e}\sqrt{ab},\left(\frac{b}{a}\right)^\frac32cde)\frac{z}{\sigma};q)_\infty}
{((\sqrt{\frac{a}{b}},\sqrt{\frac{b}{a}})\frac{\sigma}{z},(\pm q\sqrt{b},\sqrt{ab},c\sqrt{\frac{b}{a}},d\sqrt{\frac{b}{a}},e\sqrt{\frac{b}{a}},\frac{b^\frac32}{\sqrt{a}},\frac{qa^\frac32}{cde\sqrt{b}})\frac{z}{\sigma};q)_\infty}\,\dd \psi\nonumber\\
&&\hspace{4.5cm}=2\pi\frac{\vartheta(h,h\frac{a}{b};q)(\frac{qa}{c},\frac{qa}{cd},\frac{qa}{ce},\frac{qa}{de},\frac{bcd}{a},\frac{bce}{a},\frac{bde}{a};q)_\infty}{(q,\frac{qa}{c},\frac{bc}{a},\frac{bd}{a},\frac{be}{a},b,c,d,e,\frac{qa}{cde},\frac{qa^2}{bcde};q)_\infty},
\label{defint}
\end{eqnarray}
where the maximum modulus of the denominator factors in the integrand is less than unity
and
we assume there are no vanishing denominator factors
(see Remark \ref{rem41}).
\end{thm}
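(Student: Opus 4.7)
The plan is to apply Theorem~\ref{gascaru} so that the integral collapses to a symmetric $H$-sum of two nonterminating balanced very-well-poised ${}_8W_7$'s, and then to invoke Bailey's three-term transformation to collapse that sum to the product on the right of \eqref{defint}.

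First I would match the integrand with the form \eqref{qqform}. Setting $d_1=\sqrt{a/b}$, $d_2=\sqrt{b/a}$ (so ${\bf d}=\{d_1,d_2\}$, $d_1/d_2=a/b$, $d_1d_2=1$) and $f=h$, I read off the $z/\sigma$- and $\sigma/z$-factors to obtain
\[
{\bf a}:=\bigl\{\pm\sqrt{b},\,\tfrac{q\sqrt{ab}}{c},\,\tfrac{q\sqrt{ab}}{d},\,\tfrac{q\sqrt{ab}}{e},\,(b/a)^{3/2}cde\bigr\}\quad(A=6),
\]
\[
{\bf c}:=\bigl\{\pm q\sqrt{b},\,\sqrt{ab},\,c\sqrt{b/a},\,d\sqrt{b/a},\,e\sqrt{b/a},\,\tfrac{b^{3/2}}{\sqrt{a}},\,\tfrac{qa^{3/2}}{cde\sqrt{b}}\bigr\}\quad(C=8),
\]
so that $C-A-2=0$. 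Identity \eqref{qqform} of Theorem~\ref{gascaru} then writes the left-hand side of \eqref{defint} as $\frac{2\pi\,\vartheta(h,ha/b;q)}{(q;q)_\infty}\,H({\bf a},{\bf c},{\bf d};q)$.

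Second, I would compute the two balanced ${}_8\phi_7(q,q)$ series making up $H$. A direct multiplication shows that $d_1{\bf c}=\{\pm q\sqrt{a},\,a,b,c,d,e,\tfrac{qa^2}{bcde}\}$, $d_1{\bf a}=\{\pm\sqrt{a},\tfrac{qa}{c},\tfrac{qa}{d},\tfrac{qa}{e},\tfrac{bcde}{a}\}$ and $qd_1/d_2=qa/b$, so the $d_1$-term of $H$ is the balanced $\Whyp87{a}{b,c,d,e,\tfrac{qa^2}{bcde}}{q,q}$. The symmetric swap $d_1\leftrightarrow d_2$ gives the balanced $\Whyp87{b^2/a}{b,\tfrac{bc}{a},\tfrac{bd}{a},\tfrac{be}{a},\tfrac{qa}{cde}}{q,q}$. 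Both are balanced because in each case the five free bottom parameters multiply to give $q^2A^2/(BCDEF)=q$.

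Third, and this is the crux, I would apply Bailey's three-term transformation of a nonterminating very-well-poised ${}_8W_7$ (\cite[(III.37)]{GaspRah}) with $\lambda=b^2/a$. For the balanced argument $z=q$ the two ${}_8W_7(q,q)$'s appearing on one side of Bailey's identity are precisely the two in $H$, while the third ${}_8W_7$ on the other side admits a closed-form product evaluation (a specialisation to the balanced locus, related to Rahman's extension of Jackson's summation). Rearranging Bailey's identity gives an explicit product formula for the combination $H({\bf a},{\bf c},{\bf d};q)$; multiplying by $\frac{2\pi\,\vartheta(h,ha/b;q)}{(q;q)_\infty}$ then produces the right-hand side of \eqref{defint}.

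The main obstacle is the bookkeeping of prefactors. Each symmetric term in $H$ carries a ratio of infinite $q$-shifted factorials $(d_j{\bf a};q)_\infty/(d_{j'}/d_j,\,d_j{\bf c};q)_\infty$, and these must be shown to line up exactly with the coefficients in Bailey's transformation so that the numerator factors $(qa/c,qa/cd,qa/ce,qa/de,bcd/a,bce/a,bde/a;q)_\infty$ and the denominator factors $(q,qa/c,bc/a,bd/a,be/a,b,c,d,e,qa/(cde),qa^2/(bcde);q)_\infty$ on the right of \eqref{defint} emerge cleanly after cancellation with $(q;q)_\infty$ and the $\vartheta(h,ha/b;q)$ prefactor. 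The remainder of the argument is routine verification of convergence in the stated parameter range.
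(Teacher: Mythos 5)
Your proposal is correct and follows essentially the same route as the paper's proof: you use Theorem \ref{gascaru} with exactly the same parameter sets $(A,C,D)=(6,8,2)$, and you evaluate the resulting two-term symmetric combination of balanced ${}_8W_7(q,q)$'s (bases $a$ and $b^2/a$) via Bailey's three-term transformation (III.37) of Gasper \& Rahman specialized to the balanced locus $qa^2=bcdef$, where the third series collapses to a product (in the paper this collapse is made explicit as a reduction to a ${}_6W_5$ summed by (II.20)). The only difference is the order of operations --- the paper first derives the two-term series identity and then reads off the integral from \eqref{qqform}, whereas you convert the integral to $H$ first --- which is immaterial.
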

\begin{proof}
Start with Bailey's three-term transformation of
a nonterminating very-well-poised ${}_8W_7$ \cite[(III.37)]{GaspRah}. Then 
as in the discussion surrounding \cite[(2.11.7)]{GaspRah}, replace $f$ using the
substitution $qa^2=bcdef$. The converts the nonterminating very-well-poised ${}_8W_7$ with argument $bd/a$ to a nonterminating very-well-poised ${}_6W_5$ with the same argument.
this nonterminating very-well-poised ${}_6W_5$ can then be summed 
using the nonterminating sum for a nonterminating very-well-poised ${}_6W_5$ \cite[(II.20)]{GaspRah}
\begin{equation}
\Whyp65{a}{b,c,d}{q,\frac{qa}{bcd}}=
\frac{\left(qa,\frac{qa}{bc},\frac{qa}{bd},\frac{qa}{cd};q\right)_\infty}
{\left(\frac{qa}{b},\frac{qa}{c},\frac{qa}{d},\frac{qa}{bcd};q\right)_\infty}.
\end{equation}
The remaining two nonterminating very-well-poised ${}_8W_7$'s both now have argument $q$ and the application of Theorem \ref{gascaru}
with $(A,C,D)=(6,8,2)$, given by
\begin{eqnarray}
&&\hspace{-5cm}{\bf a}:=\left\{\pm\sqrt{b},\frac{q}{c}\sqrt{ab},\frac{q}{d}\sqrt{ab},\frac{q}{e}\sqrt{ab},\left(\frac{b}{a}\right)^\frac32cde\right\},\\
&&\hspace{-5cm}{\bf c}:=\left\{\sqrt{ab},\pm q\sqrt{b},\frac{b^\frac32}{\sqrt{a}},c\sqrt{\frac{b}{a}},d\sqrt{\frac{b}{a}},e\sqrt{\frac{b}{a}},\frac{qa^\frac32}{cde\sqrt{b}}\right\},\\
&&\hspace{-5cm}{\bf d}:=\left\{\sqrt{\frac{a}{b}},\sqrt{\frac{b}{a}}\right\},
\end{eqnarray}
generates the integral in \eqref{defint}. This completes the proof.
\end{proof}

\noindent Now that we've generated the definite integral in 
Theorem \ref{dintthm}, we can apply \eqref{jjform}
to this definite integral to obtain 
in principle, a new
summation theorem which is an eight-term sum of nonterminating very-well-poised ${}_8W_7$'s with argument $q$.
However, when one applies \eqref{jjform}, two of the terms vanish because of leading factors of $q^{-1}$ in the list of numerator infinite $q$-shifted factorials. So we are left with a summation formula for six-terms each of which are nonterminating very-well-poised ${}_8W_7$'s with argument $q$. This is given as follows.

\begin{thm}
\label{thm72}
Let $q\in\CCdag$, $a,b,c,d,e,h\in\CCast$,
{and we assume there are no vanishing denominator factors
(see Remark \ref{rem41}).}
%$a,b,c,d,e\not\in\Upsilon_q$. 
Then, one has the 
following six-term summation formulas for nonterminating balanced very-well-poised 
${}_8W_7$ with argument $q$:
\begin{eqnarray}
&&\frac{\vartheta(ha,\frac{h}{b};q)(\pm\frac{1}{\sqrt{a}},\frac{q}{c},\frac{q}{d},\frac{q}{e},\frac{bcde}{a^2};q)_\infty}
{(\pm\frac{q}{\sqrt{a}},a,b,\frac{b}{a},\frac{c}{a},\frac{d}{a},\frac{e}{a},\frac{qa}{bcde};q)_\infty}
\Whyp87{a}{b,c,d,e,\frac{qa^2}{bcde}}{q,q}\nonumber\\
&&\hspace{1.1cm}+\frac{\vartheta(h\frac{qa^2}{bcde},h\frac{cde}{qa};q)(\pm\frac{bcde}{qa^\frac32},\frac{bcd}{a},\frac{bce}{a},\frac{bde}{a},\frac{b^2c^2d^2e^2}{qa^3};q)_\infty}
{(\pm\frac{bcde}{a^\frac32},\frac{qa^2}{bcde},\frac{qa}{cde},\frac{bcde}{qa},\frac{b^2cde}{qa^2},\frac{bc^2de}{qa^2},\frac{bcd^2e}{qa^2},\frac{bcde^2}{qa^2};q)_\infty}\nonumber\\
&&\hspace{5cm}\times\Whyp87{\frac{q^2a^3}{b^2c^2d^2e^2}}{\frac{qa^2}{bcde},\frac{qa}{bcd},\frac{qa}{bce},\frac{qa}{bde},\frac{qa}{cde}}{q,q}\nonumber\\
&&+\II{b;c,d,e}\!\!\!
\frac{\vartheta(hb,h\frac{a}{b^2};q)(\pm\frac{\sqrt{a}}{b},\frac{qa}{bc},\frac{qa}{bd},\frac{qa}{be},\frac{cde}{a};q)_\infty}
{(\pm\frac{q\sqrt{a}}{b},b,\frac{b^2}{a},\frac{a}{b},\frac{c}{b},\frac{d}{b},\frac{e}{b},\frac{qa^2}{b^2cde};q)_\infty}
\Whyp87{\frac{b^2}{a}}{b,\frac{bc}{a},\frac{bd}{a},\frac{be}{a},\frac{qa}{cde}}{q,q}\nonumber\\
&&\hspace{1.1cm}=\frac{\vartheta(h,h\frac{a}{b};q)
(\frac{qa}{cd},\frac{qa}{ce},\frac{qa}{de},\frac{bcd}{a},\frac{bce}{a},\frac{bde}{a};q)_\infty}
{(b,c,d,e,\frac{bc}{a},\frac{bd}{a},\frac{be}{a},\frac{qa}{cde},\frac{qa^2}{bcde};q)_\infty}.
\end{eqnarray}
\end{thm}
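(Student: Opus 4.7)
The plan is to produce the claimed identity by evaluating the integral in \eqref{defint} a second way via the $J$-form \eqref{jjform} of Theorem \ref{gascaru}, and then to equate the resulting expansion with the closed-form evaluation already obtained in Theorem \ref{dintthm}.

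First I would apply \eqref{jjform} to the integrand on the left of \eqref{defint}, using the same parameter sets
$
{\bf a}=\{\pm\sqrt{b},\,\tfrac{q\sqrt{ab}}{c},\,\tfrac{q\sqrt{ab}}{d},\,\tfrac{q\sqrt{ab}}{e},\,(\tfrac{b}{a})^{3/2}cde\},
$
$
{\bf c}=\{\sqrt{ab},\,\pm q\sqrt{b},\,\tfrac{b^{3/2}}{\sqrt{a}},\,c\sqrt{\tfrac{b}{a}},\,d\sqrt{\tfrac{b}{a}},\,e\sqrt{\tfrac{b}{a}},\,\tfrac{qa^{3/2}}{cde\sqrt{b}}\},
$
$
{\bf d}=\{\sqrt{\tfrac{a}{b}},\,\sqrt{\tfrac{b}{a}}\}
$
exhibited in the proof of Theorem \ref{dintthm}, and identifying the free parameter $f$ of \eqref{jjform} with the parameter $h$ appearing in \eqref{defint}. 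Since $C=8$, $A=6$, $D=2$, the hypothesis $C\ge A+2$ holds with equality, so the integral is rewritten as a sum of eight nonterminating ${}_{A+2}\phi_{C-1}={}_8\phi_7$, each weighted by a pair of modified theta functions and a ratio of infinite $q$-shifted factorials. Because $C-A-2=0$ and $d_1d_2=1$, the argument of every one of these ${}_8\phi_7$'s reduces cleanly to $q$.

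Next I would discard two summands. For $c_k=q\sqrt{b}$ the set ${\bf a}/c_k$ contains $\sqrt{b}/(q\sqrt{b})=q^{-1}$, and for $c_k=-q\sqrt{b}$ it contains $-\sqrt{b}/(-q\sqrt{b})=q^{-1}$; in both cases the numerator factor $(q^{-1};q)_\infty$ kills the summand, since it includes the vanishing factor $(1-q^{-1+1})=0$. The six surviving indices $c_k\in\{\sqrt{ab},\,b^{3/2}/\sqrt{a},\,c\sqrt{b/a},\,d\sqrt{b/a},\,e\sqrt{b/a},\,qa^{3/2}/(cde\sqrt{b})\}$ correspond term by term to the six summands on the left of the claimed identity: direct computation of $c_k{\bf d}$, $qc_k/{\bf a}$ and $qc_k/{\bf c}_{[k]}$ exhibits the resulting ${}_8\phi_7$ as very-well-poised with base parameters $a$, $q^2a^3/(b^2c^2d^2e^2)$, and $b^2/a,\,c^2/a,\,d^2/a,\,e^2/a$, respectively; the last four of these, together with their multiplicative prefactors, assemble into the symmetric sum $\II{b;c,d,e}$ indicated in the statement. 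Equating this six-term expansion with the closed form on the right of \eqref{defint} and cancelling the common factor $2\pi/(q;q)_\infty$ then yields the identity.

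The principal obstacle will be the bookkeeping rather than any new analytic idea: one must verify very-well-poisedness for each of the six surviving summands and carefully identify every $\vartheta$- and infinite $q$-shifted factorial prefactor produced by \eqref{jjform} with the corresponding prefactor in the statement, making repeated use of the definition $\vartheta(z;q)=(z,q/z;q)_\infty$ and, where convenient, of the identity \eqref{infneg}, to recast the output of \eqref{jjform} into the precise form displayed in the theorem. The convergence hypotheses (balancedness of each ${}_8W_7(q,q)$ and avoidance of vanishing denominators in the infinite $q$-shifted factorials and modified theta functions) are handled by Remark \ref{rem41} and by the explicit structural equality of $\prod a_j$ and $\prod c_k$ that made the argument collapse to $q$.
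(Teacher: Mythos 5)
Your proposal is correct and follows exactly the paper's route: apply \eqref{jjform} (with $f=h$ and the parameter sets ${\bf a},{\bf c},{\bf d}$ from the proof of Theorem \ref{dintthm}) to the integral \eqref{defint}, obtain eight ${}_8W_7(q,q)$ terms of which the two indexed by $c_k=\pm q\sqrt{b}$ vanish because of the factor $(q^{-1};q)_\infty=0$, and equate the surviving six terms with the closed-form evaluation. Your write-up merely supplies the bookkeeping (identifying the six bases $a$, $q^2a^3/(b^2c^2d^2e^2)$, $b^2/a,c^2/a,d^2/a,e^2/a$ and checking the argument collapses to $q$) that the paper's proof leaves implicit.
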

\begin{proof}
Starting with Theorem \ref{dintthm}, we apply
\eqref{jjform}.
This produces an eight-term sum of 
nonterminating very-well-poised ${}_8W_7$'s with argument $q$.
However two of the terms vanish because of the appearance of leading factors of $q^{-1}$ in the numerator infinite $q$-shifted factorials. This leaves us with six-terms each of which are nonterminating very-well-poised ${}_8W_7$'s with argument $q$, which 
completes the proof.
\end{proof}

\begin{cor}
Let $n\in\Z$, $q\in\CCdag$, $a,b,c,d,e\in\CCast$,
{and we assume there are no vanishing denominator factors
(see Remark \ref{rem41}).}
% $a,b,c,d,e\not\in\Upsilon_q$.
Then, one produces the following six-term transformation formulas for nonterminating 
balanced very-well-poised ${}_8W_7$ with argument $q$:
\begin{eqnarray}
&&\hspace{-0.2cm}0=
\frac{\vartheta(q^na,\frac{q^n}{b};q)(\pm\frac{1}{\sqrt{a}},\frac{q}{c},\frac{q}{d},\frac{q}{e},\frac{bcde}{a^2};q)_\infty}
{(\pm\frac{q}{\sqrt{a}},a,b,\frac{b}{a},\frac{c}{a},\frac{d}{a},\frac{e}{a},\frac{qa}{bcde};q)_\infty}
\Whyp87{a}{b,c,d,e,\frac{qa^2}{bcde}}{q,q}\nonumber\\
&&\hspace{.4cm}+\frac{\vartheta(q^{n+1}\frac{a^2}{bcde},q^{n-1}\frac{cde}{a};q)(\pm\frac{bcde}{qa^\frac32},\frac{bcd}{a},\frac{bce}{a},\frac{bde}{a},\frac{b^2c^2d^2e^2}{qa^3};q)_\infty}
{(\pm\frac{bcde}{a^\frac32},\frac{qa^2}{bcde},\frac{qa}{cde},\frac{bcde}{qa},\frac{b^2cde}{qa^2},\frac{bc^2de}{qa^2},\frac{bcd^2e}{qa^2},\frac{bcde^2}{qa^2};q)_\infty}\nonumber\\
&&\hspace{5cm}\times
\Whyp87{\frac{q^2a^3}{b^2c^2d^2e^2}}{\frac{qa^2}{bcde},\frac{qa}{bcd},\frac{qa}{bce},\frac{qa}{bde},\frac{qa}{cde}}{q,q}\nonumber\\[-0.2cm]
&&\hspace{0.4cm}+\II{b;c,d,e}\!\!\!
\frac{\vartheta(q^nb,q^n\frac{a}{b^2};q)(\pm\frac{\sqrt{a}}{b},\frac{qa}{bc},\frac{qa}{bd},\frac{qa}{be},\frac{cde}{a};q)_\infty}
{(\pm\frac{q\sqrt{a}}{b},b,\frac{b^2}{a},\frac{a}{b},\frac{c}{b},\frac{d}{b},\frac{e}{b},\frac{qa^2}{b^2cde};q)_\infty}
\Whyp87{\frac{b^2}{a}}{b,\frac{bc}{a},\frac{bd}{a},\frac{be}{a},\frac{qa}{cde}}{q,q},
\label{f2}\\[0.8cm]
&&\hspace{-0.2cm}0=\frac{\vartheta(q^nb,\frac{q^n}{a};q)(\pm\frac{1}{\sqrt{a}},\frac{q}{c},\frac{q}{d},\frac{q}{e},\frac{bcde}{a^2};q)_\infty}
{(\pm\frac{q}{\sqrt{a}},a,b,\frac{b}{a},\frac{c}{a},\frac{d}{a},\frac{e}{a},\frac{qa}{bcde};q)_\infty}
\Whyp87{a}{b,c,d,e,\frac{qa^2}{bcde}}{q,q}\nonumber\\
&&\hspace{.4cm}+\frac{\vartheta(q^{n+1}\frac{a}{cde},q^{n-1}\frac{bcde}{a^2};q)(\pm\frac{bcde}{qa^\frac32},\frac{bcd}{a},\frac{bce}{a},\frac{bde}{a},\frac{b^2c^2d^2e^2}{qa^3};q)_\infty}
{(\pm\frac{bcde}{a^\frac32},\frac{qa^2}{bcde},\frac{qa}{cde},\frac{bcde}{qa},\frac{b^2cde}{qa^2},\frac{bc^2de}{qa^2},\frac{bcd^2e}{qa^2},\frac{bcde^2}{qa^2};q)_\infty}\nonumber\\
&&\hspace{5cm}\times
\Whyp87{\frac{q^2a^3}{b^2c^2d^2e^2}}{\frac{qa^2}{bcde},\frac{qa}{bcd},\frac{qa}{bce},\frac{qa}{bde},\frac{qa}{cde}}{q,q}\nonumber\\[-0.2cm]
&&\hspace{0.4cm}+\II{b;c,d,e}\!\!\!
\frac{\vartheta(\frac{q^nb^2}{a},\frac{q^n}{b};q)(\pm\frac{\sqrt{a}}{b},\frac{qa}{bc},\frac{qa}{bd},\frac{qa}{be},\frac{cde}{a};q)_\infty}
{(\pm\frac{q\sqrt{a}}{b},b,\frac{b^2}{a},\frac{a}{b},\frac{c}{b},\frac{d}{b},\frac{e}{b},\frac{qa^2}{b^2cde};q)_\infty}
\Whyp87{\frac{b^2}{a}}{b,\frac{bc}{a},\frac{bd}{a},\frac{be}{a},\frac{qa}{cde}}{q,q}.
\label{f4}
\end{eqnarray}
\end{cor}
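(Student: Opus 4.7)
The plan is to deduce both identities \eqref{f2} and \eqref{f4} from the six-term summation formula of Theorem \ref{thm72} by specializing the free parameter $h$ so that the right-hand side vanishes. The right-hand side of Theorem \ref{thm72} carries the global prefactor $\vartheta(h, h a/b; q) = \vartheta(h;q)\vartheta(h a/b;q)$, and the theta function satisfies $\vartheta(q^m;q) = 0$ for every $m\in\Z$ (this follows immediately from the infinite-product definition \eqref{tfdef}, since one of the factors in $(z,q/z;q)_\infty$ vanishes). Hence any choice of $h$ making one of the two arguments an integer power of $q$ forces the right-hand side of Theorem \ref{thm72} to be zero while leaving the six terms on the left-hand side meaningful and unchanged in structure.

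For \eqref{f2}, I would set $h = q^n$, so that $\vartheta(h;q) = \vartheta(q^n;q) = 0$. Substituting this value into each of the six theta prefactors in Theorem \ref{thm72}---namely $\vartheta(ha, h/b;q)$, $\vartheta(h\tfrac{qa^2}{bcde}, h\tfrac{cde}{qa};q)$, and the four theta prefactors $\vartheta(hb, ha/b^2;q)$, $\vartheta(hc, ha/(bc)\cdot\text{etc.};q)$ built into the symmetric sum $\II{b;c,d,e}$---gives precisely the theta factors written in \eqref{f2}. None of the very-well-poised ${}_8W_7$'s nor the infinite $q$-shifted factorial prefactors depend on $h$, so they are carried along verbatim.

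For \eqref{f4}, the only modification is to choose $h = q^n b/a$ instead, which makes $h a/b = q^n$, so again the right-hand side of Theorem \ref{thm72} vanishes by $\vartheta(q^n;q) = 0$. Under this substitution the six theta prefactors become $\vartheta(q^n b, q^n/a;q)$, $\vartheta(q^{n+1}a/(cde), q^{n-1}bcde/a^2;q)$, and the four symmetric terms $\vartheta(q^n b^2/a, q^n/b;q)$, $\vartheta(q^n bc/a, q^n/c;q)$, etc., matching the expressions displayed in \eqref{f4}.

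There is no essential obstacle: both identities follow immediately from Theorem \ref{thm72} together with the vanishing property $\vartheta(q^m;q)=0$. The only work is the straightforward bookkeeping of substituting $h=q^n$ and $h=q^n b/a$ into the six theta prefactors and verifying that the resulting arguments agree with those printed in \eqref{f2} and \eqref{f4}, which I would record in two short lines.
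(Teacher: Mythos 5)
Your proposal is correct and follows exactly the paper's own argument: substitute $h=q^n$ (killing $\vartheta(h;q)$) to obtain \eqref{f2} and $h=q^n b/a$ (killing $\vartheta(ha/b;q)$) to obtain \eqref{f4} in Theorem \ref{thm72}, with the theta prefactors on the left transforming as you describe and the ${}_8W_7$'s and $q$-shifted factorials unaffected. Nothing further is needed.
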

\begin{proof}
Taking $h=q^n$ and $h=q^n\frac{b}{a},$ respectively in 
in Theorem \ref{thm72} produces
transformation formulas for nonterminating 
 very-well-poised ${}_8W_7$ with argument $q$
given by \eqref{f2}, \eqref{f4}.
This completes the proof.
\end{proof}

\begin{cor}
\label{thm72a}
Let $n\in\Z$, $q\in\CCdag$, $a,b,c,d,e\in\CCast$,
{and we assume there are no vanishing denominator factors
(see Remark \ref{rem41}).}
%$a,b,c,d,e\not\in\Upsilon_q$. 
Then, one has the 
following five-term summation formulas
for nonterminating balanced very-well-poised ${}_8W_7$ with argument $q$:
\begin{eqnarray}
&&\hspace{1.1cm}
\frac{\vartheta(q^{n+1}\frac{a}{bcde},q^{n-1}\frac{cde}{a^2};q)(\pm\frac{bcde}{qa^\frac32},\frac{bcd}{a},\frac{bce}{a},\frac{bde}{a},\frac{b^2c^2d^2e^2}{qa^3};q)_\infty}
{(\pm\frac{bcde}{a^\frac32},\frac{qa^2}{bcde},\frac{qa}{cde},\frac{bcde}{qa},\frac{b^2cde}{qa^2},\frac{bc^2de}{qa^2},\frac{bcd^2e}{qa^2},\frac{bcde^2}{qa^2};q)_\infty}
\nonumber\\
&&\hspace{5cm}\times
\Whyp87{\frac{q^2a^3}{b^2c^2d^2e^2}}{\frac{qa^2}{bcde},\frac{qa}{bcd},\frac{qa}{bce},\frac{qa}{bde},\frac{qa}{cde}}{q,q}\nonumber\\
+&&\II{b;c,d,e}\!\!\!
\frac{\vartheta(q^n\frac{b}{a},\frac{q^n}{b^2};q)(\pm\frac{\sqrt{a}}{b},\frac{qa}{bc},\frac{qa}{bd},\frac{qa}{be},\frac{cde}{a};q)_\infty}
{(\pm\frac{q\sqrt{a}}{b},b,\frac{b^2}{a},\frac{a}{b},\frac{c}{b},\frac{d}{b},\frac{e}{b},\frac{qa^2}{b^2cde};q)_\infty}
\Whyp87{\frac{b^2}{a}}{b,\frac{bc}{a},\frac{bd}{a},\frac{be}{a},\frac{qa}{cde}}{q,q}\nonumber\\
&&\hspace{1.1cm}=\frac{\vartheta(\frac{q^n}{a},\frac{q^n}{b};q)
(\frac{qa}{cd},\frac{qa}{ce},\frac{qa}{de},\frac{bcd}{a},\frac{bce}{a},\frac{bde}{a};q)_\infty}
{(b,c,d,e,\frac{bc}{a},\frac{bd}{a},\frac{be}{a},\frac{qa}{cde},\frac{qa^2}{bcde};q)_\infty},\label{g1}\\
&&\hspace{1.1cm}\frac{\vartheta(q^{n+1}\frac{a^2}{cde},q^{n-1}\frac{bcde}{a};q)(\pm\frac{bcde}{qa^\frac32},\frac{bcd}{a},\frac{bce}{a},\frac{bde}{a},\frac{b^2c^2d^2e^2}{qa^3};q)_\infty}
{(\pm\frac{bcde}{a^\frac32},\frac{qa^2}{bcde},\frac{qa}{cde},\frac{bcde}{qa},\frac{b^2cde}{qa^2},\frac{bc^2de}{qa^2},\frac{bcd^2e}{qa^2},\frac{bcde^2}{qa^2};q)_\infty}\nonumber\\
&&\hspace{5cm}\times\Whyp87{\frac{q^2a^3}{b^2c^2d^2e^2}}{\frac{qa^2}{bcde},\frac{qa}{bcd},\frac{qa}{bce},\frac{qa}{bde},\frac{qa}{cde}}{q,q}\nonumber\\
+&&\II{b;c,d,e}\!\!\!
\frac{\vartheta(q^nb^2,q^n\frac{a}{b};q)(\pm\frac{\sqrt{a}}{b},\frac{qa}{bc},\frac{qa}{bd},\frac{qa}{be},\frac{cde}{a};q)_\infty}
{(\pm\frac{q\sqrt{a}}{b},b,\frac{b^2}{a},\frac{a}{b},\frac{c}{b},\frac{d}{b},\frac{e}{b},\frac{qa^2}{b^2cde};q)_\infty}
\Whyp87{\frac{b^2}{a}}{b,\frac{bc}{a},\frac{bd}{a},\frac{be}{a},\frac{qa}{cde}}{q,q}\nonumber\\
&&\hspace{1.1cm}=\frac{\vartheta(q^na,q^nb;q)
(\frac{qa}{cd},\frac{qa}{ce},\frac{qa}{de},\frac{bcd}{a},\frac{bce}{a},\frac{bde}{a};q)_\infty}
{(b,c,d,e,\frac{bc}{a},\frac{bd}{a},\frac{be}{a},\frac{qa}{cde},\frac{qa^2}{bcde};q)_\infty}
\label{g2}
\end{eqnarray}
\end{cor}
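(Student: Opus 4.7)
The plan is to derive both five-term identities directly from the six-term identity of Theorem \ref{thm72} by specializing the free parameter $h$ so that the coefficient of the nonterminating very-well-poised $_8W_7(a;b,c,d,e,\frac{qa^2}{bcde};q,q)$ on the left-hand side of Theorem \ref{thm72} is annihilated by a theta-function zero. Recall from Section \ref{tfptf} that $\vartheta(q^n;q)=0$ for every $n\in\Z$, and observe that the first term on the left-hand side of Theorem \ref{thm72} carries the theta factor $\vartheta(ha,\tfrac{h}{b};q)$. Hence that term vanishes as soon as either $ha$ or $h/b$ is equal to $q^n$ for some $n\in\Z$.

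For the identity \eqref{g1}, I would set $h=q^n/a$, so that $\vartheta(ha;q)=\vartheta(q^n;q)=0$ and the $_8W_7(a;b,c,d,e,\frac{qa^2}{bcde};q,q)$ term drops out. A direct substitution then converts the remaining theta factors exactly as needed: the second term's factor becomes
\[
\vartheta\!\left(h\tfrac{qa^2}{bcde},h\tfrac{cde}{qa};q\right)=\vartheta\!\left(q^{n+1}\tfrac{a}{bcde},q^{n-1}\tfrac{cde}{a^2};q\right),
\]
the symmetric sum's factor becomes $\vartheta(hb,h\tfrac{a}{b^2};q)=\vartheta(q^n\tfrac{b}{a},\tfrac{q^n}{b^2};q)$, and the right-hand side factor $\vartheta(h,h\tfrac{a}{b};q)$ becomes $\vartheta(\tfrac{q^n}{a},\tfrac{q^n}{b};q)$. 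For the identity \eqref{g2}, the dual choice $h=q^nb$ makes $\vartheta(h/b;q)=\vartheta(q^n;q)=0$ kill the same first term, and the analogous bookkeeping converts the remaining theta factors to those displayed in \eqref{g2}, with the right-hand side becoming $\vartheta(q^na,q^nb;q)$.

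In both cases, no $_8W_7$'s are altered, since the argument $h$ does not enter the very-well-poised hypergeometric series themselves, only the theta factors and a handful of infinite $q$-shifted factorials (which are all independent of $h$). The remaining verification is purely clerical: one checks that the five surviving theta arguments agree, on the nose, with those printed in \eqref{g1} and \eqref{g2}. The standing nonvanishing assumptions of Remark \ref{rem41}, together with the requirement that none of the surviving modified theta functions have an argument in $\Upsilon_q$ (which is automatic for generic $a,b,c,d,e$ and integer $n$), ensure that no spurious $0/0$ cancellations interfere.

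The only mildly nontrivial point will be to confirm that choosing $h\in\{q^n/a,q^nb\}$ does not simultaneously cause a second term on the left-hand side to degenerate; this is handled by the genericity hypothesis on $a,b,c,d,e$ together with the observation that the theta arguments in the second and third terms involve ratios such as $a^2/(bcde)$, $cde/a$, $b/a$, $b^2/a$ that are not forced into $\Upsilon_q$ by the specialization of $h$. Once this is checked, the identities \eqref{g1} and \eqref{g2} follow immediately. This completes the proof.
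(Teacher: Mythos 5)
Your proposal is correct and is essentially the paper's own argument: the paper likewise specializes $h=q^n a^{-1}$ and $h=q^n b$ in Theorem \ref{thm72}, so that $\vartheta(q^n;q)=0$ annihilates the term containing $\Whyp87{a}{b,c,d,e,\frac{qa^2}{bcde}}{q,q}$, leaving the five-term identities \eqref{g1} and \eqref{g2}. Your explicit verification of the surviving theta arguments and the genericity remark simply spell out details the paper leaves implicit.
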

\begin{proof}
Take 
$h\in\{q^na^{-1},q^nb\},$ with $n\in\Z$, 
in Theorem \ref{thm72} 
and the first term following the symmetric sum
vanishes and one obtains five-term summation formulas
which produce \eqref{g1}, \eqref{g2} respectively. 
This completes the proof. 
\end{proof}

\begin{cor}
\label{cor75}
Let $n\in\Z$, $q\in\CCdag$, $a,b,c,d,e\in\CCast$,
{and we assume there are no vanishing denominator factors
(see Remark \ref{rem41}).}
%$a,b,c,d,e\not\in\Upsilon_q$. 
Then, one has the 
following five-term summation formulas
for nonterminating balanced very-well-poised ${}_8W_7$ with argument $q$:
\begin{eqnarray}
&&\hspace{1.1cm}\frac{\vartheta(q^{n-1}
\frac{bcde}{a},
q^{n-1}\frac{cde}{a^2};q)(\pm\frac{1}{\sqrt{a}},\frac{q}{c},\frac{q}{d},\frac{q}{e},\frac{bcde}{a^2};q)_\infty}
{(\pm\frac{q}{\sqrt{a}},a,b,\frac{b}{a},\frac{c}{a},\frac{d}{a},\frac{e}{a},\frac{qa}{bcde};q)_\infty}
\Whyp87{a}{b,c,d,e,\frac{qa^2}{bcde}}{q,q}\nonumber\\
+&&\II{b;c,d,e}\!\!\!
\frac{\vartheta(q^{n-1}\frac{b^2cde}{a^2},q^{n-1}\frac{cde}{ab};q)(\pm\frac{\sqrt{a}}{b},\frac{qa}{bc},\frac{qa}{bd},\frac{qa}{be},\frac{cde}{a};q)_\infty}
{(\pm\frac{q\sqrt{a}}{b},b,\frac{b^2}{a},\frac{a}{b},\frac{c}{b},\frac{d}{b},\frac{e}{b},\frac{qa^2}{b^2cde};q)_\infty}
\Whyp87{\frac{b^2}{a}}{b,\frac{bc}{a},\frac{bd}{a},\frac{be}{a},\frac{qa}{cde}}{q,q}\nonumber\\
&&\hspace{1.1cm}=\frac{\vartheta(q^{n-1}\frac{bcde}{a^2},q^{n-1}\frac{cde}{a};q)
(\frac{qa}{cd},\frac{qa}{ce},\frac{qa}{de},\frac{bcd}{a},\frac{bce}{a},\frac{bde}{a};q)_\infty}
{(b,c,d,e,\frac{bc}{a},\frac{bd}{a},\frac{be}{a},\frac{qa}{cde},\frac{qa^2}{bcde};q)_\infty},
\label{h1}\\
&&\hspace{1.1cm}\frac{\vartheta(
q^{n+1}\frac{a^2}{cde},
q^{n+1}
\frac{a}{bcde}
;q)(\pm\frac{1}{\sqrt{a}},\frac{q}{c},\frac{q}{d},\frac{q}{e},\frac{bcde}{a^2};q)_\infty}
{(\pm\frac{q}{\sqrt{a}},a,b,\frac{b}{a},\frac{c}{a},\frac{d}{a},\frac{e}{a},\frac{qa}{bcde};q)_\infty}
\Whyp87{a}{b,c,d,e,\frac{qa^2}{bcde}}{q,q}\nonumber\\
+&&\II{b;c,d,e}\!\!\!
\frac{\vartheta(
q^{n+1}\frac{ab}{cde},
q^{n+1}\frac{a^2}{b^2cde};q)(\pm\frac{\sqrt{a}}{b},\frac{qa}{bc},\frac{qa}{bd},\frac{qa}{be},\frac{cde}{a};q)_\infty}
{(\pm\frac{q\sqrt{a}}{b},b,\frac{b^2}{a},\frac{a}{b},\frac{c}{b},\frac{d}{b},\frac{e}{b},\frac{qa^2}{b^2cde};q)_\infty}
\Whyp87{\frac{b^2}{a}}{b,\frac{bc}{a},\frac{bd}{a},\frac{be}{a},\frac{qa}{cde}}{q,q}\nonumber\\
&&\hspace{1.1cm}=\frac{\vartheta(
q^{n+1}\frac{a}{cde},
q^{n+1}\frac{a^2}{bcde};q)
(\frac{qa}{cd},\frac{qa}{ce},\frac{qa}{de},\frac{bcd}{a},\frac{bce}{a},\frac{bde}{a};q)_\infty}
{(b,c,d,e,\frac{bc}{a},\frac{bd}{a},\frac{be}{a},\frac{qa}{cde},\frac{qa^2}{bcde};q)_\infty}.
\label{h2}
\end{eqnarray}
\end{cor}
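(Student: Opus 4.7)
The plan is to apply Theorem \ref{thm72} with carefully chosen values of the free parameter $h$, in exactly the same spirit as the proofs of Corollary \ref{thm72a} and the preceding six-term corollary. In Theorem \ref{thm72} there are three terms on the left-hand side: the first contains the ${}_8W_7$ with top parameter $a$ weighted by $\vartheta(ha,h/b;q)$, the second contains the ${}_8W_7$ with top parameter $q^2a^3/(b^2c^2d^2e^2)$ weighted by $\vartheta\bigl(h\,qa^2/(bcde),h\,cde/(qa);q\bigr)$, and the third is the symmetric sum weighted by $\vartheta(hb,ha/b^2;q)$. Since $\vartheta(q^n;q)=0$ for every $n\in\Z$, we can kill any single one of these three terms by forcing an argument of the associated theta function to lie in $\Upsilon_q$.

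For \eqref{h1}, first I would choose $h=q^{n-1}bcde/a^2$. Then the theta function in front of the ${}_8W_7$ of top parameter $q^2a^3/(b^2c^2d^2e^2)$ becomes
\[
\vartheta\!\left(q^{n-1}\tfrac{bcde}{a^2}\!\cdot\!\tfrac{qa^2}{bcde},\,q^{n-1}\tfrac{bcde}{a^2}\!\cdot\!\tfrac{cde}{qa};q\right)=\vartheta\!\left(q^n,\,q^{n-2}\tfrac{bc^2d^2e^2}{a^3};q\right)=0,
\]
so the second ${}_8W_7$ term drops out and we are left with a five-term identity. One checks directly that under this specialization the theta factors in front of the remaining ${}_8W_7$'s and on the right-hand side of Theorem \ref{thm72} take exactly the forms displayed on the left-hand side and right-hand side of \eqref{h1}, respectively.

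For \eqref{h2}, the same strategy applies with $h=q^{n+1}a/(cde)$. Then
\[
\vartheta\!\left(q^{n+1}\tfrac{a}{cde}\!\cdot\!\tfrac{qa^2}{bcde},\,q^{n+1}\tfrac{a}{cde}\!\cdot\!\tfrac{cde}{qa};q\right)=\vartheta\!\left(q^{n+2}\tfrac{a^3}{bc^2d^2e^2},\,q^n;q\right)=0,
\]
again killing the middle ${}_8W_7$. A direct substitution of $h$ into $\vartheta(ha,h/b;q)$, $\vartheta(hb,ha/b^2;q)$, and $\vartheta(h,ha/b;q)$ produces the theta factors appearing in \eqref{h2}, completing the derivation.

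There is no substantive obstacle — the only task is bookkeeping. The main thing to be careful about is tracking the arguments of the four theta functions through the substitution and ensuring the resulting expressions match those written in \eqref{h1} and \eqref{h2}; all the ${}_8W_7$ factors and the infinite $q$-shifted factorial ratios are untouched by the specialization of $h$. One must also note, as in Remark \ref{rem41}, that the various nonvanishing-denominator hypotheses in Theorem \ref{thm72} descend to the present corollary once $h$ is fixed, provided $n\in\Z$ is chosen so that none of the specialized theta arguments themselves fall in $\Upsilon_q$.
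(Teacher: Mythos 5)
Your proposal is correct and is essentially identical to the paper's proof: the paper also specializes $h=q^{n-1}bcde/a^2$ and $h=q^{n+1}a/(cde)$ in Theorem \ref{thm72}, observes that $\vartheta(q^n;q)=0$ kills the term containing ${}_8W_7\bigl(q^2a^3/(b^2c^2d^2e^2);\ldots\bigr)$, and reads off \eqref{h1} and \eqref{h2} from the remaining theta factors. Your verification of the specialized theta arguments matches the displayed formulas, so nothing is missing.
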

\begin{proof}
Take 
$h\in\{q^{n-1}\frac{bcde}{a^2},q^{n+1}\frac{a}{cde}\}$ with $n\in\Z$, 
in Theorem \ref{thm72} 
and the second term following the symmetric sum
vanishes and one obtains five-term summation formulas
which produce \eqref{h1}, \eqref{h2} respectively. 
This completes the proof. 
\end{proof}

\begin{rem}
Note that one can also choose 
%\begin{equation}
$h\in q^n\left\{\frac{1}{b},\frac{1}{c},\frac{1}{d},\frac{1}{e},\frac{b^2}{a},\frac{c^2}{a},\frac{d^2}{a},\frac{e^2}{a}\right\},$
%\end{equation}
in Theorem
\ref{thm72} with $n\in\Z$ and this will also 
produce five-term summation formulas
for nonterminating balanced 
very-well-poised ${}_8W_7$. However, we leave
the representation of these summation
formulas to the reader.
\end{rem}

\section{Gasper \& Rahman's product formula for {a product of} two nonterminating ${}_2\phi_1$'s {and
for the square of a nonterminating
well-poised ${}_2\phi_1$}}

{
This section follows
from two formulas 
which can be found in Gasper \& Rahman, namely
\cite[(8.8.18)]{GaspRah}
\eqref{gasprahprod2phi1}.
\begin{eqnarray}
&&\hspace{-1.5cm}\qhyp21{a,b}{c}{q,z}
\qhyp21{a,\frac{qa}{c}}{\frac{qa}{b}}{q,z}
\nonumber\\
&&\hspace{0.5cm}=\frac{(az,\frac{abz}{c};q)_\infty}{(z,\frac{bz}{c};q)_\infty}
\qhyp65{a,\frac{c}{b},\pm\sqrt{\frac{ac}{b}},\pm\sqrt{\frac{qac}{b}}}
{c,az,\frac{qa}{b},\frac{ac}{b},\frac{qc}{bz}}{q,q}
\nonumber\\
&&\hspace{2.0cm}+\frac{(a,az,bz,\frac{c}{b},\frac{qaz}{c};q)_\infty}
{(z,z,c,\frac{qa}{b},\frac{c}{bz};q)_\infty}
\qhyp65{z,\frac{abz}{c},\pm z\sqrt{\frac{ab}{c}},\pm z\sqrt{\frac{qab}{c}}}{az,bz,\frac{qaz}{c},
\frac{abz^2}{c},
\frac{qbz}{c}
}{q,q}.
\label{gasprahprod2phi1}
\end{eqnarray}
and 
\cite[(8.8.12)]{GaspRah}
\begin{eqnarray}
&&\hspace{-1.5cm}\left(\qhyp21{a,b}{\frac{qa}{b}}{q,z}\right)^2
%\nonumber\\&&\hspace{0.5cm}
=\frac{(az,\frac{b^2z}{q};q)_\infty}{(z,\frac{b^2z}{qa};q)_\infty}
\qhyp54{a,\frac{qa}{b^2},\pm\frac{\sqrt{q}a}{b},-\frac{qa}{b}
}
{az,\frac{qa}{b},\frac{qa^2}{b^2},\frac{q^2a}{b^2z}}{q,q}
\nonumber\\
&&\hspace{2.5cm}+\frac{(a,az,bz,bz,\frac{qa}{b^2};q)_\infty}
{(z,z,\frac{qa}{b},\frac{qa} {b},\frac{qa}{b^2z};q)_\infty}
\qhyp54{z,-bz,\pm\frac{bz}{\sqrt{q}}, \frac{b^2z}{q}}{az,bz,
\frac{b^2z^2}{q},\frac{b^2z}{a}
}{q,q}.
\label{gasprahsq2phi1}
\end{eqnarray}
However, 
\eqref{gasprahsq2phi1}
follows
directly from their product formula
\eqref{gasprahprod2phi1}
using the substitution $c=qa/b$.
}

\subsection{{Gasper \& Rahman's product of two
nonterminating ${}_2\phi_1$'s}}

Using a product formula for a product of two nonterminating ${}_2\phi_1$'s with modulus of the argument less than unity, one can obtain a 
$q$-Mellin--Barnes integral as its representation.

\begin{thm}
\label{thm81}
Let $q,z\in\CCdag$, $\sigma\in(0,\infty)$
$a,b,c,h\in\CCast$, 
%$c,\frac{qa}{b}\not\in\Omega_q$, 
%$a,b,c,z\not\in\Upsilon_q$,
$w=\expe^{i\psi}$, {$h\sqrt{\frac{c}{bz}}\not\in\Upsilon_q$},
{and
we assume there are no vanishing denominator factors
(see Remark \ref{rem41}).}
Then, one has the 
following $q$-Mellin--Barnes integral for a 
product of two nonterminating ${}_2\phi_1$'s with arbitrary
argument $z$:
\begin{eqnarray}
&&\hspace{-0.5cm}
\int_{-\pi}^\pi 
\frac{((h\sqrt{\frac{c}{bz}},\frac{q}{h}\sqrt{\frac{bz}{c}})\frac{\sigma}{w},
(h\sqrt{\frac{c}{bz}},\frac{q}{h}\sqrt{\frac{bz}{c}},\sqrt{bcz},a\sqrt{\frac{cz}{b}},qa\sqrt{\frac{z}{bc}},a\sqrt{\frac{bz^3}{c}})\frac{w}{\sigma};q)_\infty}
{((\sqrt{\frac{c}{bz}},\sqrt{\frac{bz}{c}})\frac{\sigma}{w},(a\sqrt{\frac{bz}{c}},\sqrt{\frac{cz}{b}},\pm\sqrt{az},\pm\sqrt{qaz})\frac{w}{\sigma};q)_\infty}
\,\dd \psi\nonumber\\
&&\hspace{2.5cm}=\frac{2\pi\vartheta(h,h\frac{c}{bz};q)(z,c,\frac{qa}{b};q)_\infty}
{(q,a,\frac{c}{b},\frac{abz}{c};q)_\infty}
\qhyp21{a,b}{c}{q,z}\qhyp21{a,\frac{qa}{c}}{\frac{qa}{b}}{q,z},
\end{eqnarray}
where the maximum modulus of the denominator factors in the integrand is less than unity.
\end{thm}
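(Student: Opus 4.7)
The plan is to combine Gasper \& Rahman's product formula \eqref{gasprahprod2phi1} with the integral-to-symmetric-sum mechanism of Theorem \ref{gascaru} (specifically the $H$-form in \eqref{Hdefn} paired with \eqref{qqform}). Since the right-hand side of \eqref{gasprahprod2phi1} is a sum of two nonterminating ${}_6\phi_5$'s with argument $q$, it already has exactly the shape of the two-term sum $H({\bf a},{\bf c},{\bf d};q)$, provided I can identify the sets ${\bf a},{\bf c}$ and the pair ${\bf d}=\{d_1,d_2\}$ so that both terms simultaneously match after the $d_1\leftrightarrow d_2$ interchange.

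First I would look at the numerator parameters of the two ${}_6\phi_5$'s, namely $\{a,c/b,\pm\sqrt{ac/b},\pm\sqrt{qac/b}\}$ and $\{z,abz/c,\pm z\sqrt{ab/c},\pm z\sqrt{qab/c}\}$, and observe that the second set is obtained from the first by a uniform scaling factor $bz/c$. This forces $d_2/d_1=bz/c$, and after staring at the denominator factors $(d_1,d_2)\sigma/z$ that must appear under the integral in \eqref{qqform}, one is led to choose
\[
d_1=\sqrt{c/(bz)},\qquad d_2=\sqrt{bz/c},\qquad f=h,
\]
so that $f d_1/d_2=hc/(bz)$ reproduces the theta prefactor $\vartheta(h,hc/(bz);q)$ in the claimed formula. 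With this choice I would set
\[
{\bf c}:=\left\{a\sqrt{bz/c},\ \sqrt{cz/b},\ \pm\sqrt{az},\ \pm\sqrt{qaz}\right\},\qquad
{\bf a}:=\left\{\sqrt{bcz},\ a\sqrt{cz/b},\ qa\sqrt{z/(bc)},\ a\sqrt{bz^3/c}\right\},
\]
and verify term-by-term that $d_1{\bf c}$ and $d_1{\bf a}\cup\{qd_1/d_2\}$ recover the numerator and denominator parameters of the first ${}_6\phi_5$ in \eqref{gasprahprod2phi1}, and likewise $d_2{\bf c}$ and $d_2{\bf a}\cup\{qd_2/d_1\}$ recover those of the second ${}_6\phi_5$. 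This is the identification step; it is purely algebraic.

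Next I would reconcile the multiplicative prefactors. The prefactor produced by \eqref{Hdefn} for the $d_k$-term is $(d_k{\bf a};q)_\infty/((d_{k'}/d_k,d_k{\bf c};q)_\infty)$, which carries spurious factors such as $(\pm\sqrt{ac/b},\pm\sqrt{qac/b};q)_\infty$; by \eqref{a2q} these collapse to $(ac/b;q)_\infty$, and similarly for the $d_2$ factors. Cancelling the common $(ac/b;q)_\infty$ and $(abz^2/c;q)_\infty$ pieces leaves exactly the prefactors $(az,c,qa/b;q)_\infty/((bz/c,a,c/b;q)_\infty)$ and $(az,bz,qaz/c;q)_\infty/((z,c/(bz),abz/c;q)_\infty)$ present in \eqref{gasprahprod2phi1}, up to the common global factor $(z,c,qa/b;q)_\infty/((a,c/b,abz/c;q)_\infty)$ appearing in the statement of the theorem.

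Finally, I would invoke \eqref{qqform}, which equates this $H$-sum (times $2\pi\vartheta(h,hc/(bz);q)/(q;q)_\infty$) to the $q$-Mellin--Barnes integral on the left; the explicit form of the integrand with the chosen $d_1,d_2,f,{\bf a},{\bf c}$ matches the integrand displayed in the statement. The parameter constraints ($h,hc/(bz)\notin\Upsilon_q$, and the modulus bounds $|q|/\sigma$--$1/\sigma$ on $d_1,d_2$ translating into $|\sqrt{c/(bz)}|,|\sqrt{bz/c}|<1/\sigma$) come directly from the hypotheses of Theorem \ref{gascaru}. The main obstacle, and the only place that requires care, is the bookkeeping of the $\pm$-pairs: keeping track of which $(\pm a,\pm q^{1/2}a;q)_\infty$ cancellations apply under \eqref{a2q} so that the prefactor ratio exactly matches the claimed $(z,c,qa/b;q)_\infty/((q,a,c/b,abz/c;q)_\infty)$. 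Once this is in place, the theorem is immediate and the proof is complete.
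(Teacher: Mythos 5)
Your proposal is correct and follows essentially the same route as the paper: start from Gasper \& Rahman's product formula \eqref{gasprahprod2phi1} and apply Theorem \ref{gascaru} with $(A,C,D)=(4,6,2)$, $f=h$, ${\bf d}=\{\sqrt{c/(bz)},\sqrt{bz/c}\}$ and exactly the sets ${\bf a}$, ${\bf c}$ given in \eqref{acdproda}--\eqref{acdprodb}. The paper's proof records only this parameter identification, so your additional checks (matching the ${}_6\phi_5$ parameter lists, collapsing the $\pm$-pairs via \eqref{a2q}, and reconciling the prefactors) are the same bookkeeping the paper leaves implicit.
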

\begin{proof}
Start with the formula 
for a product of two nonterminating ${}_2\phi_1$ , namely 
\eqref{gasprahprod2phi1}.
Now use Theorem \ref{gascaru} with the following 
sets of parameters with $(A,C,D)=(4,6,2)$
\begin{eqnarray}
&&{\bf a}:=\left\{\sqrt{bcz},a\sqrt{\frac{bz^3}{c}},
qa\sqrt{\frac{z}{bc}},a\sqrt{\frac{cz}{b}}\right\},\,
%\\&&
{\bf c}:=\left\{a\sqrt{\frac{bz}{c}},\sqrt{\frac{cz}{b}},\pm\sqrt{az},\pm\sqrt{qaz}\right\},
\label{acdproda}\\&&
{\bf d}:=\left\{\sqrt{\frac{c}{bz}},\sqrt{\frac{bz}{c}}\right\}.
\label{acdprodb}
\end{eqnarray}
This completes the proof.
\end{proof}

Now we take advantage of the $q$-Mellin--Barnes integral
for a product of two nonterminating ${}_2\phi_1$'s with arbitrary 
argument $z$ with modulus less than unity to obtain
a six-term transformation for the product of two 
nonterminating ${}_2\phi_1$'s with modulus of the argument less than unity.

\begin{thm}
Let $q,z\in\CCdag$, $a,b,c, h\in\CCast$,
$h,h\frac{c}{bz}\not\in\Upsilon_q$,
{and we assume there are no vanishing denominator factors
(see Remark \ref{rem41}).}
%$h,a,b,c,z\not\in\Upsilon_q$. 
Then, one has the 
following six-term representation for a 
product of two nonterminating ${}_2\phi_1$'s with arbitrary
argument $z$:
\begin{eqnarray}
&&\hspace{0.0cm}\qhyp21{a,b}{c}{q,z}
\qhyp21{a,\frac{qa}{c}}{\frac{qa}{b}}{q,z}
=\frac{1}{\vartheta(h,h\frac{c}{bz};q)}
\nonumber\\
&&\hspace{0.5cm}\times\Biggl(\frac{
\vartheta(ha,h\frac{c}{abz};q)
(\frac{q}{b},\frac{c}{a},\frac{c}{b},\frac{c}{b};q)_\infty}
{(c,\frac{c}{ab},\frac{c}{ab},\frac{qa}{b};q)_\infty}
\qhyp65{a,b,\frac{qa}{c},\frac{qb}{c},\frac{abz}{c},\frac{q}{z}}
{\frac{qab}{c},\pm\sqrt{\frac{qab}{c}},\pm q\sqrt{\frac{ab}{c}}}{q,q}\nonumber\\
&&\hspace{1.5cm}+\frac{
\vartheta(h\frac{c}{b},\frac{h}{z};q)
(a,a,b,\frac{qa}{c},\frac{abz}{c},\frac{abz}{c};q)_\infty}
{(z,z,c,\frac{qa}{b},\frac{ab}{c},\frac{ab}{c};q)_\infty}
\qhyp65{\frac{q}{a},\frac{q}{b},\frac{c}{a},\frac{c}{b},z,\frac{qc}{abz}}
{\frac{qc}{ab},\pm\sqrt{\frac{qc}{ab}},\pm q\sqrt{\frac{c}{ab}}}{q,q}\nonumber\\
&&\hspace{1.5cm}+\frac{
\vartheta(h\sqrt{\frac{ac}{b}},
\frac{h}{z}\sqrt{\frac{c}{ab}},
\sqrt{\frac{bc}{a}}
;q)
(a,\frac{c}{b},
%q\sqrt{\frac{a}{bc}},
\frac{abz}{c};q)_\infty}
{(-1,\pm\sqrt{q},z,c,\frac{qa}{b},\sqrt{\frac{c}{ab}},\sqrt{\frac{ab}{c}};q)_\infty}
\nonumber\\&&\hspace{6.4cm}\times
\qhyp65{\sqrt{\frac{ac}{b}},\sqrt{\frac{bc}{a}},q\sqrt{\frac{a}{bc}},q\sqrt{\frac{b}{ac}},z\sqrt{\frac{ab}{c}},\frac{q}{z}\sqrt{\frac{c}{ab}}
}{-q,\pm\sqrt{q},q\sqrt{\frac{ab}{c}},q\sqrt{\frac{c}{ab}}}{q,q}\nonumber\\
&&\hspace{1.5cm}+\frac{
\vartheta(-h\sqrt{\frac{ac}{b}},
-\frac{h}{z}\sqrt{\frac{c}{ab}},
-\sqrt{\frac{bc}{a}}
;q)
(a,\frac{c}{b},\frac{abz}{c}
%-\sqrt{\frac{bc}{a}},
%-q\sqrt{\frac{a}{bc}}
;q)_\infty}
{(-1,\pm\sqrt{q},z,c,\frac{qa}{b},-\sqrt{\frac{c}{ab}},-\sqrt{\frac{ab}{c}};q)_\infty}
\nonumber\\&&\hspace{4.7cm}\times
\qhyp65{-\sqrt{\frac{ac}{b}},-\sqrt{\frac{bc}{a}},-q\sqrt{\frac{a}{bc}},-q\sqrt{\frac{b}{ac}},-z\sqrt{\frac{ab}{c}},-\frac{q}{z}\sqrt{\frac{c}{ab}}
}{-q,\pm\sqrt{q},-q\sqrt{\frac{ab}{c}},-q\sqrt{\frac{c}{ab}}}{q,q}\nonumber\\
&&\hspace{1.5cm}+\frac{
\vartheta(h\sqrt{\frac{qac}{b}},\frac{h}{z}\sqrt{\frac{c}{qab}};q)
(a,\frac{c}{b},\frac{abz}{c},\sqrt{\frac{ac}{qb}},\sqrt{\frac{bc}{qa}},\sqrt{\frac{qa}{bc}},z\sqrt{\frac{ab}{qc}};q)_\infty}
{(-1,\pm\frac{1}{\sqrt{q}},z,c,\frac{qa}{b},\sqrt{\frac{qac}{b}},\sqrt{\frac{c}{qab}},\sqrt{\frac{ab}{qc}},z\sqrt{\frac{qab}{c}};q)_\infty}
\nonumber\\&&\hspace{6.0cm}\times
\qhyp65{\sqrt{\frac{qac}{b}},\sqrt{\frac{qbc}{a}},\sqrt{\frac{q^3a}{bc}},\sqrt{\frac{q^3b}{ac}},z\sqrt{\frac{qab}{c}},\frac{1}{z}\sqrt{\frac{q^3c}{ab}}}
{-q,\pm q^\frac32,\sqrt{\frac{q^3ab}{c}},\sqrt{\frac{q^3c}{ab}}}{q,q}\nonumber\\
&&\hspace{1.5cm}+\frac{
\vartheta(-h\sqrt{\frac{qac}{b}},-\frac{h}{z}\sqrt{\frac{c}{qab}};q)
(a,\frac{c}{b},\frac{abz}{c},-\sqrt{\frac{ac}{qb}},-\sqrt{\frac{bc}{qa}},-\sqrt{\frac{qa}{bc}},-z\sqrt{\frac{ab}{qc}};q)_\infty}
{(-1,\pm\frac{1}{\sqrt{q}},z,c,\frac{qa}{b},-\sqrt{\frac{qac}{b}},-\sqrt{\frac{c}{qab}},-\sqrt{\frac{ab}{qc}},-z\sqrt{\frac{qab}{c}};q)_\infty}
\nonumber\\&&\hspace{4.0cm}\times
\qhyp65{-\sqrt{\frac{qac}{b}},-\sqrt{\frac{qbc}{a}},-\sqrt{\frac{q^3a}{bc}},-\sqrt{\frac{q^3b}{ac}},-z\sqrt{\frac{qab}{c}},-\frac{1}{z}\sqrt{\frac{q^3c}{ab}},}
{-q,\pm q^\frac32,-\sqrt{\frac{q^3ab}{c}},-\sqrt{\frac{q^3c}{ab}}}{q,q}
\vast{)}.\nonumber\\
\end{eqnarray}

\end{thm}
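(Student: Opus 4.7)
The plan is to derive this six-term expansion as a direct application of Theorem \ref{gascaru}, specifically the second evaluation \eqref{jjform} of the $q$-Mellin--Barnes integral, using the very same parameter sets \eqref{acdproda}, \eqref{acdprodb} that already worked in Theorem \ref{thm81}. In effect, Theorem \ref{thm81} tells us that the product on the left-hand side equals (up to an explicit prefactor) the contour integral; now apply the ``$J$-form'' of \eqref{jjform} to that same integral in place of the ``$H$-form'' \eqref{qqform} used earlier.

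First I would recall that with the choices $(A,C,D)=(4,6,2)$ and
\[
{\bf a}=\left\{\sqrt{bcz},\,a\sqrt{\tfrac{bz^3}{c}},\,qa\sqrt{\tfrac{z}{bc}},\,a\sqrt{\tfrac{cz}{b}}\right\},\quad
{\bf c}=\left\{a\sqrt{\tfrac{bz}{c}},\,\sqrt{\tfrac{cz}{b}},\,\pm\sqrt{az},\,\pm\sqrt{qaz}\right\},\quad
{\bf d}=\left\{\sqrt{\tfrac{c}{bz}},\sqrt{\tfrac{bz}{c}}\right\},
\]
we have $C-A-2=0$, so the resulting ${}_{A+2}\phi_{C-1}={}_6\phi_5$ has argument $q$ (from the prescription $q(qc_k)^{C-A-2}a_1\cdots a_A/(d_1d_2c_1\cdots c_C)$ in \eqref{Jdefn}, which simplifies to $q$ after cancellation among the $a_j$'s, $c_k$'s and $d_l$'s). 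The six terms in $J({\bf a},{\bf c},{\bf d};f,q)$ correspond to the six choices of $c_k$ in ${\bf c}$, and each contributes a theta-function factor $\vartheta(fc_kd_1,f/(c_kd_2);q)$ together with the infinite $q$-shifted factorial ratio in \eqref{Jdefn}.

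Next, I would match the external prefactor from Theorem \ref{thm81} against the common denominator $\vartheta(h,h\frac{c}{bz};q)$ appearing throughout, identifying $f=h$ and using $d_1d_2=1$, so that $\vartheta(hc_k,h/c_k;q)$ becomes the theta factor associated to the $k$-th term. The six pairs $(hc_k,h/c_k)$ corresponding to the six entries of ${\bf c}$ precisely produce the six theta arguments listed:
$(ha,\tfrac{hc}{abz})$, $(\tfrac{hc}{b},\tfrac{h}{z})$,
$(\pm h\sqrt{ac/b},\pm \tfrac{h}{z}\sqrt{c/(ab)})$, and $(\pm h\sqrt{qac/b},\pm \tfrac{h}{z}\sqrt{c/(qab)})$, after dividing through by the prefactor $(q,a,\frac{c}{b},\frac{abz}{c};q)_\infty$. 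Assembling the ratio $({\bf a}/c_k;q)_\infty/\big((c_k{\bf d},{\bf c}_{[k]}/c_k;q)_\infty\big)$ for each $k$ yields the six infinite $q$-shifted factorial blocks, and the ${}_6\phi_5$ numerators/denominators $(c_k{\bf d},qc_k/{\bf a};qc_k/{\bf c}_{[k]})$ give the six hypergeometric series displayed.

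The main obstacle will not be any deep step but rather bookkeeping: verifying all six ${\bf c}_{[k]}/c_k$ denominator lists agree with the stated expressions, and in particular correctly handling the pairs $\pm\sqrt{az}$ and $\pm\sqrt{qaz}$, where the sign splits generate factors of $(-1;q)_\infty$, $(\pm\sqrt{q};q)_\infty$, and $(\pm 1/\sqrt{q};q)_\infty$ in the denominators. I would check one ``easy'' term (say $c_1=a\sqrt{bz/c}$) carefully to calibrate the signs and $q$-shift conventions, then process the $\pm$-paired terms by symmetry, noting that swapping the sign inside $c_k$ just conjugates the corresponding theta and $q$-shifted factorial entries. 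Once the first term is verified, the remaining five follow by the same mechanical substitution, completing the proof.
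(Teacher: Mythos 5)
Your proposal is correct and is essentially the paper's own proof: the paper likewise obtains the six-term expansion by applying the $J$-form \eqref{jjform} of Theorem \ref{gascaru} with the parameter sets \eqref{acdproda}, \eqref{acdprodb} (so that $C-A-2=0$ and the argument reduces to $q$), and then connecting with the integral of Theorem \ref{thm81}. Your identification of the six theta pairs and the $\pm$-paired terms matches the paper's (implicit) bookkeeping, with only the trivial caveat that the theta arguments are $\vartheta(hc_kd_1,hd_1/c_k;q)$ rather than $\vartheta(hc_k,h/c_k;q)$, which you in fact list correctly.
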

\begin{proof}
Applying ${\bf a}$, ${\bf c}$ and ${\bf d}$ in
\eqref{acdproda}, \eqref{acdprodb} to \eqref{jjform}
and connecting with Theorem \ref{thm81}
%with \eqref{gasprahprod2phi1}
completes the proof.
\end{proof}

\subsection{{Gasper \& Rahman's formula for the square of a nonterminating well-poised ${}_2\phi_1$}}

Using the formula for the square of a
nonterminating well-poised ${}_2\phi_1$ with modulus of the argument less than unity one can obtain a 
$q$-Mellin--Barnes integral as its representation.

\begin{thm}
\label{thm83}
Let $q,z\in\CCdag$, $a,b,h\in\CCast$, %$\frac{qa}{b}\not\in\Omega_q$, 
%$a,b,z\not\in\Upsilon_q$,
$\sigma\in(0,\infty)$,
{$\frac{h}{b}\sqrt{\frac{qa}{z}}\not\in\Upsilon_q$},
$w=\expe^{i\psi}$. Then, one has the 
following $q$-Mellin--Barnes integral for the
square of a nonterminating well-poised 
${}_2\phi_1$ with arbitrary
argument $z$:
\begin{eqnarray}
&&\hspace{-0.5cm}
\int_{-\pi}^\pi 
\frac{((\frac{h}{b}\sqrt{\frac{qa}{z}},\frac{qb}{h}\sqrt{\frac{z}{qa}})\frac{\sigma}{w},
(\frac{h}{b}\sqrt{\frac{qa}{z}},\frac{qb}{h}\sqrt{\frac{z}{qa}},\sqrt{qaz},b\sqrt{\frac{az^3}{q}},\frac{1}{b}\sqrt{qa^3z})\frac{w}{\sigma};q)_\infty}
{((b\sqrt{\frac{z}{qa}},\frac{1}{b}\sqrt{\frac{qa}{z}})\frac{\sigma}{w},(b\sqrt{\frac{az}{q}},\pm\sqrt{az},\frac{1}{b}\sqrt{qaz},-\sqrt{qaz})\frac{w}{\sigma};q)_\infty}
\,\dd \psi\nonumber\\
&&\hspace{2.5cm}=\frac{2\pi\vartheta(h,h\frac{qa}{b^2z};q)(z,\frac{qa}{b},\frac{qa}{b};q)_\infty}
{(q,a,\frac{qa}{b^2},\frac{b^2z}{q};q)_\infty}
\left(\qhyp21{a,b}{\frac{qa}{b}}{q,z}\right)^2,
\end{eqnarray}
where the maximum modulus of the denominator factors in the integrand is less than unity
and 
we assume there are no vanishing denominator factors
(see Remark \ref{rem41}).
\end{thm}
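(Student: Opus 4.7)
The plan is to observe that Theorem~\ref{thm83} is the specialization $c=qa/b$ of Theorem~\ref{thm81}, in exact parallel with the excerpt's remark that the square formula \eqref{gasprahsq2phi1} arises from the product formula \eqref{gasprahprod2phi1} under the same substitution. Under $c\mapsto qa/b$, the product of nonterminating ${}_2\phi_1$'s in Theorem~\ref{thm81} collapses to $\bigl(\qhyp21{a,b}{qa/b}{q,z}\bigr)^2$, the prefactor $(z,c,qa/b;q)_\infty/(q,a,c/b,abz/c;q)_\infty$ becomes $(z,qa/b,qa/b;q)_\infty/(q,a,qa/b^2,b^2z/q;q)_\infty$, and the theta factor $\vartheta(h,hc/(bz);q)$ becomes $\vartheta(h,hqa/(b^2z);q)$, matching the right-hand side of the statement.

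For the integrand, the key observation is that under $c\mapsto qa/b$ the elements $\sqrt{bcz}$ and $qa\sqrt{z/(bc)}$ of the numerator set ${\bf a}$ from Theorem~\ref{thm81} both collapse to $\sqrt{qaz}$, so that $(\sqrt{qaz}\,w/\sigma;q)_\infty$ appears twice in the numerator of the integrand. Simultaneously, the factor $\pm\sqrt{qaz}$ in the denominator set ${\bf c}$ contributes $(\sqrt{qaz}\,w/\sigma;q)_\infty$ once. A single cancellation between one of the two numerator copies and the $+\sqrt{qaz}$ denominator copy reduces the integrand exactly to the one displayed in Theorem~\ref{thm83}, in which only the $-\sqrt{qaz}$ factor remains in the denominator, and the remaining integrand parameters correspond to $(A,C,D)=(3,5,2)$.

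The main obstacle is simply the bookkeeping of this single cancellation and the matching of the $q$-shifted factorial prefactors; the convergence and non-vanishing constraints then transfer verbatim from Theorem~\ref{thm81} and Remark~\ref{rem41}. An equivalent and independent verification is to invoke Theorem~\ref{gascaru} directly on \eqref{gasprahsq2phi1} with $(A,C,D)=(3,5,2)$, parameter sets ${\bf a}=\{\sqrt{qaz},b\sqrt{az^3/q},(1/b)\sqrt{qa^3z}\}$, ${\bf c}=\{b\sqrt{az/q},\pm\sqrt{az},(1/b)\sqrt{qaz},-\sqrt{qaz}\}$, ${\bf d}=\{(1/b)\sqrt{qa/z},b\sqrt{z/(qa)}\}$, and $f=h$. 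One checks that $d_1d_2=1$ and $d_1/d_2=qa/(b^2z)$, so the theta factor $\vartheta(f,fd_1/d_2;q)$ produced by \eqref{qqform} equals $\vartheta(h,hqa/(b^2z);q)$; reconciling the prefactors in \eqref{Hdefn} with those of \eqref{gasprahsq2phi1} then reduces to two applications of \eqref{sq-1}, taking $a\mapsto a\sqrt q/b$ to rewrite $(qa^2/b^2;q)_\infty$ and $a\mapsto bz/\sqrt q$ to rewrite $(b^2z^2/q;q)_\infty$.
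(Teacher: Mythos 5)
Your proposal is correct, and in fact it contains the paper's own proof as your ``independent verification'': the paper proves Theorem~\ref{thm83} by applying Theorem~\ref{gascaru} directly to \eqref{gasprahsq2phi1} with $(A,C,D)=(3,5,2)$ and exactly the parameter sets you list (your ${\bf a}$, ${\bf c}$, ${\bf d}$ coincide with the paper's \eqref{acdprodAa}, \eqref{acdprodAb}, up to ordering, with $f=h$, $d_1d_2=1$, $d_1/d_2=qa/(b^2z)$); your reconciliation of the prefactors via two applications of \eqref{sq-1} is the bookkeeping the paper leaves implicit. Your primary route is genuinely different from the paper's: you specialize Theorem~\ref{thm81} at $c=qa/b$, mirroring how \eqref{gasprahsq2phi1} arises from \eqref{gasprahprod2phi1}, and observe that $\sqrt{bcz}$ and $qa\sqrt{z/(bc)}$ both collapse to $\sqrt{qaz}$, so one numerator factor $(\sqrt{qaz}\,w/\sigma;q)_\infty$ cancels the $+\sqrt{qaz}$ denominator factor, reducing $(A,C,D)$ from $(4,6,2)$ to $(3,5,2)$ and reproducing the displayed integrand and prefactors verbatim. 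This specialization argument is valid and arguably more economical, since it reuses Theorem~\ref{thm81} rather than redoing the Theorem~\ref{gascaru} bookkeeping; its hypotheses transfer cleanly because $h\sqrt{c/(bz)}\mapsto \frac{h}{b}\sqrt{qa/z}$ and the modulus constraint is unaffected by the cancellation (as $|-\sqrt{qaz}|=|\sqrt{qaz}|$). What the paper's direct route buys is uniformity with the other theorems in the section, where each integral representation is tied immediately to its source transformation via \eqref{qqform} and \eqref{Hdefn}, so that the subsequent five-term transformation can be read off from \eqref{jjform} with the same parameter sets.
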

\begin{proof}
Start with the formula
for {the square of a nonterminating well-poised} ${}_2\phi_1$, namely 
\eqref{gasprahsq2phi1}.
Now use Theorem \ref{gascaru} with the following 
sets of parameters with $(A,C,D)=(3,5,2)$
\begin{eqnarray}
&&{\bf a}:=\left\{\sqrt{qaz},
b\sqrt{\frac{az^3}{q}},
\frac{\sqrt{qa^3z}}{b}
\right\},\,
%\\&&
{\bf c}:=\left\{b\sqrt{\frac{az}{q}},\frac{\sqrt{qaz}}{b},\pm\sqrt{az},-\sqrt{qaz}\right\},
\label{acdprodAa}\\&&
{\bf d}:=\left\{\frac{1}{b}\sqrt{\frac{qa}{z}},b\sqrt{\frac{z}{qa}}\right\}.
\label{acdprodAb}
\end{eqnarray}
This completes the proof.
\end{proof}

Now we take advantage of the $q$-Mellin--Barnes integral
for the square of a nonterminating well-poised ${}_2\phi_1$ with arbitrary 
argument $z$ with modulus less than unity to obtain
a five-term transformation for the square.

\begin{thm}
Let $q,z\in\CCdag$, $a,b,h\in\CCast$, 
{and we assume there are no vanishing denominator factors
(see Remark \ref{rem41}).}
%$a,b,z,h
%,h \frac{qa}{b^2z}
%\not\in\Upsilon_q$. 
Then, one has the 
following five-term representation for a 
square of a nonterminating well-poised ${}_2\phi_1$ with arbitrary
argument $z$:
\begin{eqnarray}
&&\hspace{0.0cm}\left(\qhyp21{a,b}{\frac{qa}{b}}{q,z}\right)^2
=\frac{1}{\vartheta(h,h\frac{qa}{b^2z};q)}
\nonumber\\
&&\hspace{0.5cm}\times\Biggl(\frac{
\vartheta(ha,h\frac{q}{b^2z};q)
(\frac{q}{b},\frac{q}{b},\frac{qa}{b^2},\frac{qa}{b^2};q)_\infty}
{(\frac{qa}{b},\frac{qa}{b},\frac{q}{b^2},\frac{q}{b^2};q)_\infty}
\qhyp54{a,b,\frac{b^2}{a},\frac{b^2z}{q},\frac{q}{z}}
{b^2,\pm b\sqrt{q},-b}{q,q}\nonumber\\
&&\hspace{1.0cm}+\frac{
\vartheta(h\frac{qa}{b^2},\frac{h}{z};q)
(a,a,b,b,\frac{b^2z}{q},\frac{b^2z}{q};q)_\infty}
{(z,z,\frac{b^2}{q},\frac{b^2}{q},\frac{qa}{b},\frac{qa}{b};q)_\infty}
\qhyp54{\frac{q}{a},\frac{q}{b},\frac{qa}{b^2},z,\frac{q^2}{b^2z}}
{\frac{q^2}{b^2},\pm\frac{q^\frac32}{b},-\frac{q}{b}}{q,q}\nonumber\\
&&\hspace{1.0cm}+\frac{
\vartheta(h\frac{a\sqrt{q}}{b},
h\frac{\sqrt{q}}{bz}
;q)
(\sqrt{q},a,\frac{qa}{b^2},
%q\sqrt{\frac{a}{bc}},
\frac{b^2z}{q};q)_\infty}
{(-1,-\sqrt{q},z,\frac{qa}{b},\frac{qa}{b},\frac{b}{\sqrt{q}},\frac{\sqrt{q}}{b};q)_\infty}
%\nonumber\\&&\hspace{6.4cm}\times
\qhyp54{\sqrt{q},\frac{a\sqrt{q}}{b},
\frac{b\sqrt{q}}{a},\frac{bz}{\sqrt{q}},
\frac{q^\frac32}{bz}
}{-q,-\sqrt{q},b\sqrt{q},\frac{q^\frac32}{b}}{q,q}\nonumber\\
&&\hspace{1.0cm}+\frac{
\vartheta(-h\frac{a\sqrt{q}}{b},
-h\frac{\sqrt{q}}{bz}
;q)
(-\sqrt{q},a,\frac{qa}{b^2},
%q\sqrt{\frac{a}{bc}},
\frac{b^2z}{q}
;q)_\infty}
{(-1,\sqrt{q},z,\frac{qa}{b},\frac{qa}{b},-\frac{b}{\sqrt{q}},-\frac{\sqrt{q}}{b};q)_\infty}
%\nonumber\\&&\hspace{4.7cm}\times
\qhyp54{-\sqrt{q},-\frac{a\sqrt{q}}{b},
-\frac{b\sqrt{q}}{a},-\frac{bz}{\sqrt{q}},
-\frac{q^\frac32}{bz}
}{-q,\sqrt{q},-b\sqrt{q},-\frac{q^\frac32}{b}}{q,q}\nonumber\\
&&\hspace{1.0cm}+\frac{
\vartheta(-h\frac{qa}{b},-\frac{h}{bz};q)
(-1,a,-\frac{a}{b},\frac{qa}{b^2},-\frac{bz}{q},\frac{b^2z}{q};q)_\infty}
{(z,-bz,\pm\frac{1}{\sqrt{q}},\frac{qa}{b},\frac{qa}{b},-\frac{qa}{b},-\frac{1}{b},-\frac{b}{q};q)_\infty}
%\nonumber\\&&\hspace{6.0cm}\times
\qhyp54{-q,-bz,-\frac{qa}{b},-\frac{qb}{a},-\frac{q^2}{bz}}
{-qb,\pm q^\frac32,-\frac{q^2}{b}}{q,q}.
\label{jj2phi1sq}
\end{eqnarray}
\end{thm}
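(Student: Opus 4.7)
The plan is to mirror exactly the proof of the preceding transformation for the product of two ${}_2\phi_1$'s, but now using Theorem \ref{thm83} as the starting integral representation. Concretely, I would begin with the $q$-Mellin--Barnes integral for the square of a nonterminating well-poised ${}_2\phi_1$ established in Theorem \ref{thm83}, and then evaluate the same integral by a different route using the second representation \eqref{jjform} in Theorem \ref{gascaru}.

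More precisely, I would feed into \eqref{jjform} the sets of parameters already recorded in \eqref{acdprodAa}, \eqref{acdprodAb}, namely
\[
{\bf a}=\Bigl\{\sqrt{qaz},\,b\sqrt{\tfrac{az^{3}}{q}},\,\tfrac{1}{b}\sqrt{qa^{3}z}\Bigr\},\quad
{\bf c}=\Bigl\{b\sqrt{\tfrac{az}{q}},\,\tfrac{1}{b}\sqrt{qaz},\,\pm\sqrt{az},\,-\sqrt{qaz}\Bigr\},\quad
{\bf d}=\Bigl\{\tfrac{1}{b}\sqrt{\tfrac{qa}{z}},\,b\sqrt{\tfrac{z}{qa}}\Bigr\},
\]
with $(A,C,D)=(3,5,2)$. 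Since $C=5$, formula \eqref{jjform} then produces a sum of five terms, each one a ratio of theta functions and infinite $q$-shifted factorials multiplying a nonterminating ${}_5\phi_4$ of argument $q$, precisely of the shape appearing on the right-hand side of \eqref{jj2phi1sq}. Equating this expansion to the left-hand side evaluation of the same integral supplied by Theorem \ref{thm83} yields the identity, after dividing through by the common prefactor $2\pi\vartheta(h,h qa/(b^{2}z);q)_{\infty}^{-1}$ times $(z,qa/b,qa/b;q)_\infty/(q,a,qa/b^{2},b^{2}z/q;q)_\infty$ and its inverse.

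The routine but fiddly part is the bookkeeping: for each $c_{k}\in{\bf c}$ one must compute the theta factors $\vartheta(fc_{k}d_{1},f/(c_{k}d_{2});q)$ with the choice $f=h(bcz)/(qa)$ (suitably adapted to the present normalization, so that the pair $(hd_{1},qd_{2}/h)$ in \eqref{qqform} matches the integrand of Theorem \ref{thm83}), assemble the six $q$-Pochhammer numerator/denominator factors $({\bf a}/c_{k};q)_\infty$ and $(c_{k}{\bf d},{\bf c}_{[k]}/c_{k};q)_\infty$, and read off the ${}_{A+2}\phi_{C-1}={}_5\phi_4$ parameters $\{c_{k}{\bf d},qc_{k}/{\bf a}\}$ over $\{qc_{k}/{\bf c}_{[k]}\}$ with argument $q(qc_{k})^{C-A-2}(a_{1}a_{2}a_{3})/(d_{1}d_{2}c_{1}\cdots c_{5})=q$. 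Doing this in turn for $c_{k}=b\sqrt{az/q},\ \tfrac{1}{b}\sqrt{qaz},\ \sqrt{az},\ -\sqrt{az},\ -\sqrt{qaz}$ produces the five terms in \eqref{jj2phi1sq} in the stated order.

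The main obstacle will be the simplification of the resulting $q$-shifted factorial prefactors into the compact symmetric form written in \eqref{jj2phi1sq}; in particular one must repeatedly apply \eqref{a2q} and \eqref{a2q2} to collapse pairs such as $(\pm\sqrt{a};q)_\infty$, and keep careful track of the signs arising from $c_{k}/c_{k'}$ when $c_{k}$ and $c_{k'}$ differ only by a sign or by a factor of $\sqrt{q}$. Everything else—convergence of the ${}_5\phi_4(q,q)$'s, absence of vanishing denominators (which is absorbed into the blanket Remark \ref{rem41} hypothesis), and entirety in $h$ of both sides—follows from the hypotheses of Theorems \ref{gascaru} and \ref{thm83} exactly as in the product case just treated.
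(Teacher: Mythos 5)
Your proposal matches the paper's own (very terse) proof: both apply \eqref{jjform} of Theorem \ref{gascaru} with the sets ${\bf a},{\bf c},{\bf d}$ of \eqref{acdprodAa}, \eqref{acdprodAb} and equate the resulting five-term $J$-expansion with the integral evaluation of Theorem \ref{thm83} (the paper's proof cites Theorem \ref{thm81} here, evidently a typo which you implicitly correct). The only small slip is your tentative choice $f=h\,bcz/(qa)$, a leftover from the ${}_3\phi_2$ case; matching $(fd_1,qd_2/f)$ to the integrand of Theorem \ref{thm83} simply gives $f=h$, consistent with the prefactor $\vartheta(h,h\frac{qa}{b^2z};q)$, and does not affect the argument.
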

\begin{proof}
Applying ${\bf a}$, ${\bf c}$ and ${\bf d}$ in
\eqref{acdprodAa}, \eqref{acdprodAb} to \eqref{jjform}
and connecting with Theorem \ref{thm81}
%with \eqref{gasprahprod2phi1}
completes the proof.
\end{proof}

\begin{rem}
If you choose for some $n\in\Z$, 
$
h\in q^n\left\{\frac{1}{a},\frac{b^2z}{q},\frac{b^2}{qa},z,\pm\frac{b}{a\sqrt{q}},\pm\frac{bz}{\sqrt{q}},-\frac{b}{qa},-bz\right\},$
then the five-term transformation formula for the square of a nonterminating well-poised ${}_2\phi_1$ with
arbitrary argument $z$ in 
\eqref{jj2phi1sq} reduces to a four-term transformation formula.
However we leave the representation of 
these transformation formulas to the reader.
\end{rem}

An interesting application of this expansions
\eqref{gasprahsq2phi1},
\eqref{jj2phi1sq} are given 
in the following corollary which 
gives nonterminating four-term and two-term summation theorems for sums of nonterminating ${}_4\phi_3$'s.

\begin{cor}
Let $q\in\CCdag$, $a,b,h\in\CCast$,
{and we assume there are no vanishing denominator factors
(see Remark \ref{rem41}).}
%$\frac{qa}{b}\not\in\Omega_q$, 
%, $a,b,h\not\in\Upsilon_q$. 
Then, one has the 
following analogues of the
Bailey-Daum $q$-Kummer sum in the specialization as $z=-q/b$ for the square of the nonterminating well-poised ${}_2\phi_1$:
\begin{eqnarray}
&&\hspace{-1.0cm}\frac{(-q,-q;q)_\infty(qa,qa,\frac{q^2a}{b^2}\frac{q^2a}{b^2};q^2)_\infty}
{(-\frac{q}{b},-\frac{q}{b},\frac{qa}{b},\frac{qa}{b};q)_\infty}\nonumber\\
&&\hspace{0.0cm}=\frac{(-b,-\frac{qa}{b};q)_\infty}{(-\frac{q}{b},-\frac{b}{a};q)_\infty}\qhyp43{\pm\frac{a\sqrt{q}}{b},\frac{qa}{b^2},a}{\pm\frac{qa}{b},\frac{qa^2}{b^2}}{q,q}
\nonumber\\&&
\hspace{1cm}+
\frac{(-q,-q,a,-\frac{qa}{b},\frac{qa}{b^2};q)_\infty}{(\frac{qa}{b},\frac{qa}{b},-\frac{q}{b},-\frac{q}{b},-\frac{a}{b};q)_\infty}\qhyp43{\pm\sqrt{q},-b,-\frac{q}{b}}{-q,-\frac{qa}{b},-\frac{qb}{a}}{q,q}
\label{BDS1}\\
&&\hspace{0.0cm}=
\frac{1}{\vartheta(h,-h\frac{a}{b};q)}\Biggl(
\frac{\vartheta(ha,-\frac{h}{b};q)(\frac{q}{b},\frac{q}{b},\frac{qa}{b^2},\frac{qa}{b^2};q)_\infty}{(\frac{qa}{b},\frac{qa}{b},\frac{q}{b^2},\frac{q}{b^2};q)_\infty}\qhyp43{a,\pm b,\frac{b^2}{a}}{b^2,\pm b\sqrt{q}}{q,q}\nonumber\\
&&\hspace{1.0cm}+\frac{\vartheta(h\frac{qa}{b^2},-h\frac{b}{q};q)(a,a,\pm b,\pm b;q)_\infty}{(-\frac{q}{b},-\frac{q}{b},\frac{b^2}{q},\frac{b^2}{q},\frac{qa}{b},\frac{qa}{b};q)_\infty}
\qhyp43{\frac{qa}{b^2},\pm\frac{q}{b},\frac{q}{a}}{\frac{q^2}{b^2},\pm\frac{q^\frac32}{b}}{q,q}\nonumber\\
&&\hspace{1.0cm}+\frac{\vartheta(h\frac{a\sqrt{q}}{b},-\frac{h}{\sqrt{q}};q)(\sqrt{q},a,-b,\frac{qa}{b^2};q)_\infty}{(-1,-\sqrt{q},-\frac{q}{b},\frac{qa}{b},\frac{qa}{b},\frac{b}{\sqrt{q}},\frac{\sqrt{q}}{b};q)_\infty}
\qhyp43{\frac{a\sqrt{q}}{b},\frac{b\sqrt{q}}{a},\pm\sqrt{q}}{-q,b\sqrt{q},\frac{q^\frac32}{b}}{q,q}\nonumber\\
&&\hspace{1.0cm}+\frac{\vartheta(-h\frac{a\sqrt{q}}{b},\frac{h}{\sqrt{q}};q)(-\sqrt{q},a,-b,\frac{qa}{b^2};q)_\infty}{(-1,\sqrt{q},-\frac{q}{b},\frac{qa}{b},\frac{qa}{b},-\frac{b}{\sqrt{q}},-\frac{\sqrt{q}}{b};q)_\infty}
\qhyp43{-\frac{a\sqrt{q}}{b},-\frac{b\sqrt{q}}{a},\pm\sqrt{q}}{-q,-b\sqrt{q},-\frac{q^\frac32}{b}}{q,q}\Biggr).
\label{BDS2}
\end{eqnarray}
\end{cor}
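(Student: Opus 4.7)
The plan is to evaluate everything at $z=-q/b$, using the Bailey--Daum $q$-Kummer summation
\[
\qhyp21{a,b}{\frac{qa}{b}}{q,-\frac{q}{b}}=\frac{(-q;q)_\infty(qa,\frac{q^2a}{b^2};q^2)_\infty}{(-\frac{q}{b},\frac{qa}{b};q)_\infty}
\]
to identify the left-hand side. Squaring this identity produces exactly the closed-form expression displayed on the first line of the corollary, so our task reduces to substituting $z=-q/b$ into the two expansions \eqref{gasprahsq2phi1} and \eqref{jj2phi1sq} and simplifying.

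For \eqref{BDS1}, I substitute $z=-q/b$ directly into \eqref{gasprahsq2phi1}. In the first term on the right-hand side, the numerator parameter $-qa/b$ in the ${}_5\phi_4$ coincides with the denominator parameters $az=-qa/b$ and $q^2a/(b^2z)=-qa/b$, which together with simplifying the infinite $q$-shifted factorial prefactor produces the first ${}_4\phi_3$ of \eqref{BDS1}. In the second term, the numerator parameters $-bz/\sqrt q$ and $b^2z/q$ become $\pm\sqrt{q}$ and $-b$, while the denominator parameter $bz$ becomes $-q$; the pair $(q,q)$ in the numerator and denominator cancel, yielding the second ${}_4\phi_3$ of \eqref{BDS1}.

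For \eqref{BDS2}, I substitute $z=-q/b$ into \eqref{jj2phi1sq}. In each of the first four ${}_5\phi_4$'s, two of the numerator parameters collapse to a single value that already appears among the denominator parameters, reducing each ${}_5\phi_4$ to a ${}_4\phi_3$; the corresponding $\vartheta$ and infinite $q$-shifted factorial prefactors also simplify (for instance the specialization forces $h\,q/(b^2z)\mapsto -h/b$, $h/z\mapsto -hb/q$, etc.), producing the four ${}_4\phi_3$ terms of \eqref{BDS2}. The key observation is that the fifth term of \eqref{jj2phi1sq} vanishes identically at $z=-q/b$: its prefactor contains the factor $(-bz/q;q)_\infty$, which at $z=-q/b$ becomes $(1;q)_\infty=0$. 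The main delicate point is the combinatorial bookkeeping of these parameter coincidences and prefactor simplifications; once one identifies the vanishing factor $(1;q)_\infty$ in the fifth term and tracks the ${}_5\phi_4\to{}_4\phi_3$ reductions in the other four, the proof is concluded by direct comparison with Bailey--Daum on the left.
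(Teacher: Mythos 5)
Your proposal is correct and follows essentially the same route as the paper: substitute $z=-q/b$ into \eqref{gasprahsq2phi1} and \eqref{jj2phi1sq}, compare with the (squared) Bailey--Daum $q$-Kummer sum, and observe that one term of the five-term expansion drops out. Your concrete identification of the vanishing term via $(-bz/q;q)_\infty\mapsto(1;q)_\infty=0$ is exactly the ``unity factor'' the paper invokes, and the ${}_5\phi_4\to{}_4\phi_3$ parameter cancellations you describe are the intended simplifications.
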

\begin{proof}
Simply start with 
\eqref{gasprahsq2phi1}, 
\eqref{jj2phi1sq}, let $z=-q/b$
and then compare the resulting
expressions to the Bailey-Daum $q$-Kummer 
sum \cite[(17.6.5)]{NIST:DLMF}
\begin{equation}
\qhyp21{a,b}{\frac{qa}{b}}{q,-\frac{q}{b}}
=\frac{(-q;q)_\infty(qa,\frac{q^2a}{b^2};q^2)_\infty}{(-\frac{q}{b},\frac{qa}{b};q)_\infty},
\end{equation}
where $|q|<b$. For the expression 
which arises from \eqref{jj2phi1sq}
one of the term vanishes due to the appearance
of a unity factor in one of the 
numerator infinity $q$-shifted 
factorials.
This completes the proof.
\end{proof}

\begin{rem}
If you choose for some $n\in\Z$, 
%\begin{equation}
$
h\in q^n\left\{\frac{1}{a},-b,\frac{b^2}{qa},-\frac{q}{b},\pm\frac{b}{a\sqrt{q}},\pm\sqrt{q}\right\},$
%\end{equation}
then the four-term summation theorem in 
\eqref{BDS2} reduces to a three-term summation theorem.
However we leave the representation of 
these summation theorems to the reader.
\end{rem}

\begin{rem}
We also note that using the following
transformation of a nonterminating well-poised
${}_2\phi_1$ to a nonterminating very-well-poised
${}_8W_7$
cf.~\cite[(8.8.16)]{GaspRah}
\begin{equation}
\qhyp21{a,b}{\frac{qa}{b}}{q,z}
=\frac{(\pm z\sqrt{a},\pm zb\sqrt{\frac{a}{q}};q)_\infty}
{(z,-az,\pm\frac{bz}{\sqrt{q}};q)_\infty}
\Whyp87{-\frac{az}{q}}{-\frac{bz}{q},\pm\sqrt{a},\pm\frac{\sqrt{qa}}{b}}{q,-bz},
\end{equation}
where we assume there are no vanishing denominator factors
(see Remark \ref{rem41}).
Hence, the above formulas can also be expressed as a product of two nonterminating very-well-poised ${}_8W_7$'s. However, we 
leave the representations of these formulas
to the reader. Furthermore since the
nonterminating 
${}_2\phi_1$ can also be expressed
in terms of a nonterminating ${}_2\phi_2$ 
\cite[(17.9.1)]{NIST:DLMF}, and as well as a sum of two ${}_3\phi_2$'s with vanishing 
numerator parameter \cite[(17.9.3)]{NIST:DLMF} or as a sum of 
two ${}_3\phi_2$'s with vanishing denominator
parameter \cite[(17.9.3\_5)]{NIST:DLMF}, there
are many alternative ways to represent the above formulas as well as
in the previous section.
\end{rem}

\section{{Verma \& Jain's transformations for a very-well-poised nonterminating ${}_{12}W_{11}$ and ${}_{10}W_9$}}

In a paper by Verma \& Jain (1982) \cite{VermaJain82}, the authors
present examples of transformation formulas for very-well-poised
basic hypergeometric series. In this section we exploit several
of these formulas to derive integral representations for these
nonterminating very-well-poised basic hypergeometric series and
then use the integral representations to derive new transformations
for these nonterminating very-well-poised basic hypergeometric
series. We will focus in particular on a formula they derived
for very-well-poised ${}_{12}W_{11}$ and ${}_{10}W_9$. 

\subsection{The Verma--Jain ${}_{12}W_{11}$ transformation}

Using a transformation for a nonterminating very-well-poised
${}_{12}W_{11}$ as a sum of two nonterminating balanced ${}_6\phi_5$ with argument $q$ we derive the following
integral representation.

\medskip
Define the notation $\om a:=\{a,\omega a,\omega^2 a\}$
for $a\in\CC$, $\omega=\expe^{\frac23 \pi i}$, the cube root of unity.
Note that
$\omega^3=1$, 
$\omega^5=\omega^{-1}=\omega^2$, 
$\omega^4=\omega^{-2}=\omega$. 

\begin{thm}
\label{thm91}
Let $q\in\CCdag$, $\omega=\expe^{\frac13(2\pi i)}$, $\sigma,h,a,x,y,z\in\CCast$. Then
\begin{eqnarray}
&&\hspace{-1.0cm}\Whyp{12}{11}{a}{x,qx,q^2x,y,qy,q^2y,z,qz,q^2z}{q^3,\frac{q^3a^4}{(xyz)^2}}\nonumber\\
&&\hspace{-0.0cm}=
\frac{(q,qa,\frac{qa}{xy},\frac{qa}{xz},\frac{qa}{yz},
x,y,z,
\om a^\frac13
%a^\frac13\omega,a^\frac13\omega^2
;q)_\infty}
{2\pi\vartheta(h,h\frac{xyz}{qa};q)(\frac{qa}{x},\frac{qa}{y},\frac{qa}{z},a;q)_\infty}\nonumber\\
&&\hspace{1.0cm}\times
\int_{-\pi}^\pi
\frac{((h\sqrt{\frac{xyz}{qa}},\frac{q}{h}
\sqrt{\frac{qa}{xyz}})\frac{\sigma}{w},
(h\sqrt{\frac{xyz}{qa}},\frac{q}{h}\sqrt{\frac{xya}{xyz}},\pm a\sqrt{\frac{q}{xyz}},\pm\frac{qa}{\sqrt{xyz}})\frac{w}{\sigma};q)_\infty}
{((\sqrt{\frac{xyz}{qa}},\sqrt{\frac{qa}{xyz}})\frac{\sigma}{w},(
\sqrt{\frac{qax}{yz}},
\sqrt{\frac{qay}{xz}},
\sqrt{\frac{qaz}{xy}},
\om 
a^\frac56 
\sqrt{\frac{q}{xyz}}
%,\omega a^\frac56  \sqrt{\frac{q}{xyz}},
%\omega^2 a^\frac56 \sqrt{\frac{q}{xyz}}
)\frac{w}{\sigma};q)_\infty},
\end{eqnarray}
where the maximum modulus of the denominator factors in the integrand is less than unity
and 
we assume there are no vanishing denominator factors
(see Remark \ref{rem41}).
\end{thm}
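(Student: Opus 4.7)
The plan is to mimic the derivations of Theorems~\ref{thm51}, \ref{thm61}, \ref{thm81} and \ref{thm83}: start from a transformation that writes the ${}_{12}W_{11}$ as a sum of two nonterminating balanced ${}_6\phi_5(q,q)$'s, recognize that sum as $H({\bf a},{\bf c},{\bf d};q)$ from \eqref{Hdefn}, and then apply the integral identity \eqref{qqform} of Theorem~\ref{gascaru}.

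First I would quote (or re-derive) the Verma--Jain transformation from \cite{VermaJain82} which expresses the very-well-poised ${}_{12}W_{11}$ at argument $q^3a^4/(xyz)^2$ as a symmetric sum of two nonterminating balanced ${}_6\phi_5$'s at argument $q$. The two summands interchange under the map $d_1\leftrightarrow d_2$, where
\[
d_1:=\sqrt{\tfrac{xyz}{qa}},\qquad d_2:=\sqrt{\tfrac{qa}{xyz}},
\]
with $d_1d_2=1$, and each ${}_6\phi_5$ has the shape $\qhyp65{d_k{\bf c}}{d_k{\bf a},qd_k/d_{k'}}{q,q}$ for
\[
{\bf a}:=\left\{\pm a\sqrt{\tfrac{q}{xyz}},\ \pm\tfrac{qa}{\sqrt{xyz}}\right\},\qquad
{\bf c}:=\left\{\sqrt{\tfrac{qax}{yz}},\sqrt{\tfrac{qay}{xz}},\sqrt{\tfrac{qaz}{xy}},\om a^{5/6}\sqrt{\tfrac{q}{xyz}}\right\},
\]
so that $(A,C,D)=(4,6,2)$ and consequently $C-A-2=0$, matching the balanced ${}_6\phi_5(q,q)$ form in \eqref{Hdefn}.

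Next I would read off the overall prefactor. Up to the theta factors $\vartheta(h,h\tfrac{xyz}{qa};q)$ appearing in \eqref{qqform} together with the choice $f=h$, the products $(d_1{\bf a};q)_\infty/(d_2/d_1,d_1{\bf c};q)_\infty$ and its $d_1\leftrightarrow d_2$ companion are exactly the infinite $q$-shifted factorials multiplying the two ${}_6\phi_5$'s in Verma--Jain's identity; here the cube-root set $\om a^{5/6}$ is responsible for producing the $(\om a^{1/3};q)_\infty$ factor in the numerator of the integral prefactor via the elementary identity $(a^{1/3},\omega a^{1/3},\omega^2 a^{1/3};q)_\infty=(a;q^3)_\infty$-type collapses on the parts of the quotient that do not depend on $d_k$.

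Once the Verma--Jain identity is written as $\mathrm{(prefactor)}\cdot H({\bf a},{\bf c},{\bf d};q)$, a direct application of \eqref{qqform} converts $H$ into the $q$-Mellin--Barnes integral, and the $\vartheta(h,h\tfrac{xyz}{qa};q)$ in \eqref{qqform} combines with the prefactor to produce precisely the $1/[2\pi\,\vartheta(h,h\tfrac{xyz}{qa};q)]$ coefficient displayed in the theorem. The convergence and non-vanishing side conditions follow from the hypotheses on $h$, $\sigma$ and $x,y,z$ exactly as in the proof of Theorem~\ref{thm61}. The main obstacle is bookkeeping: verifying that Verma--Jain's prefactors, once rearranged in the symmetric $d_1\leftrightarrow d_2$ pattern of \eqref{Hdefn} with $f=h$, collapse to the clean product $(q,qa,qa/(xy),qa/(xz),qa/(yz),x,y,z,\om a^{1/3};q)_\infty/(qa/x,qa/y,qa/z,a;q)_\infty$, and in particular that the triples of cube-root factors from ${\bf c}$ merge correctly into a single $(\om a^{1/3};q)_\infty$ in the constant part.
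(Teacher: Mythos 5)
Your proposal is correct and follows essentially the same route as the paper: start from the Verma--Jain transformation \cite[(6.1)]{VermaJain82} writing the ${}_{12}W_{11}$ as a symmetric sum of two balanced ${}_6\phi_5(q,q)$'s, identify it with $H({\bf a},{\bf c},{\bf d};q)$ for exactly the sets ${\bf a}$, ${\bf c}$, ${\bf d}$ the paper uses, and apply \eqref{qqform} of Theorem \ref{gascaru}. Incidentally, your cardinality count $(A,C,D)=(4,6,2)$ is the correct one for these sets (the paper's proof misprints it as $(3,5,2)$), and the prefactor bookkeeping you flag indeed reduces to $(\pm\sqrt{a},\pm\sqrt{qa};q)_\infty=(a;q)_\infty$ together with $(qa;q)_\infty(\om a^{1/3};q)_\infty/(a;q)_\infty=(q^3a;q^3)_\infty$.
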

\begin{proof}
Start with the transformation formula for a nonterminating
very-well-poised ${}_{12}W_{11}$ in terms of a sum of
two balanced ${}_6\phi_5$ with argument $q$
\cite[6.1]{VermaJain82}:
\begin{eqnarray}
&&\hspace{-1.4cm}\Whyp{12}{11}{a}{x,qx,q^2x,y,qy,q^2y,z,qz,q^2z}{q^3,\frac{q^3a^4}{(xyz)^3}}\nonumber\\
&&\hspace{-0.4cm}=\frac{(qa,\frac{qa}{xy},\frac{qa}{xz},\frac{qa}{yz};q)_\infty}
{(\frac{qa}{x},\frac{qa}{y},\frac{qa}{z},\frac{qa}{xyz};q)_\infty}
\qhyp65{x,y,z,
\om a^\frac13
%,\omega a^\frac13,\omega^2a^\frac13
}
{\frac{xyz}{a},\pm\sqrt{a},\pm\sqrt{qa}}{q,q}\nonumber\\
&&\hspace{0.3cm}+\frac{(x,y,z,\frac{q^2a^3}{(xyz)^2};q)_\infty}
{(\frac{qa}{x},\frac{qa}{y},\frac{qa}{z},\frac{xyz}{qa};q)_\infty}\
\frac{(q^3a;q^3)_\infty}{(\frac{q^3a^4}{(xyz)^3};q^3)_\infty}
\qhyp65{\frac{qa}{xy},\frac{qa}{xz},\frac{qa}{yz},
\om\frac{qa^\frac43}{xyz}
%,\omega\frac{qa^\frac43}{xyz},\omega^2\frac{qa^\frac43}{xyz}
}
{\frac{q^2a}{xyz},\pm\frac{qa^\frac32}{xyz},\pm\frac{(qa)^\frac32}{xyz}}{q,q}.
\end{eqnarray}
Now use Theorem \ref{gascaru} with the following 
sets of parameters with $(A,C,D)=(3,5,2)$
\begin{eqnarray}
&&\hspace{-0.5cm}{\bf a}:=\left\{\pm \frac{q^\frac12a}{\sqrt{xyz}},
\pm\frac{qa}{\sqrt{xyz}}
\right\},\,
%\\&&
{\bf c}:=\left\{
\sqrt{\frac{qax}{yz}},
\sqrt{\frac{qay}{xz}},
\sqrt{\frac{qaz}{xy}},
\om\frac{q^\frac12a^\frac56}{\sqrt{xyz}}
\right\},
\label{acdprodAaa}\\&&
\hspace{-0.5cm}{\bf d}:=\left\{\sqrt{\frac{xyz}{qa}},\sqrt{\frac{qa}{xyz}}\right\}.
\label{acdprodAaab}
\end{eqnarray}
This completes the proof.
\end{proof}

Now we compute a six-term transformation for the ${}_{12}W_{11}$.

\begin{thm}
Let $q\in\CCdag$, $\omega=\expe^{\frac23\pi i}$, $a,x,y,z\in\CCast$,
and we assume there are no vanishing denominator factors
(see Remark \ref{rem41}). Then
\begin{eqnarray}
&&\hspace{-0.9cm}\Whyp{12}{11}{a}{x,qx,q^2x,y,qy,q^2y,z,qz,q^2z}{q^3,\frac{q^3a^4}{(xyz)^3}}=\frac{(qa,x,y,z,\frac{qa}{xy},\frac{qa}{xz},\frac{qa}{yz},
\om a^\frac13;q)_\infty}
{\vartheta(h,h\frac{xyz}{qa};q)(\frac{qa}{x},\frac{qa}{y},\frac{qa}{z},a;q)_\infty}
\nonumber\\
&&\hspace{-0.7cm}\times\Vast(
\,\,\II{x;y,z}\frac{\vartheta(hx,h\frac{yz}{qa};q)(\frac{a}{x^2},y,z;q)_\infty}
{(x,\frac{y}{x},\frac{z}{x},\frac{qa}{yz},\om\frac{a^{1/3}}{x};q)_\infty}\qhyp65{x,\frac{qa}{yz},\pm\frac{q^\frac12 x}{\sqrt{a}},\pm\frac{qx}{\sqrt{a}}}{\frac{qx}{y},\frac{qx}{z},\om\frac{qx}{a^{1/3}}}{q,q}\nonumber\\
&&\hspace{-0.4cm}+\frac{1}{(\omega,\omega^2;q)_\infty}\!\!\!\!\!\!
\II{\omega;\omega^2,\omega^3}\!\!\!\!\!\!\frac{\vartheta(h\omega a^\frac13,h\frac{\omega^2xyz}{qa^{4/3}};q)}
{(\frac{q\omega a^{4/3}}{xyz},
\frac{\omega^2x}{a^{1/3}},
\frac{\omega^2y}{a^{1/3}},
\frac{\omega^2z}{a^{1/3}};q)_\infty}
\qhyp65{\omega a^\frac13,\frac{q\omega a^{4/3}}{xyz},
\pm\frac{\omega q^\frac12}{a^{1/6}},
\pm\frac{\omega q}{a^{1/6}}}
{q\omega,q\omega^2,\frac{q\omega a^{1/3}}{x},
\frac{q\omega a^{1/3}}{y},
\frac{q\omega a^{1/3}}{z}}{q,q}\!\!\vast).
\label{eqthm92}
\end{eqnarray}
\end{thm}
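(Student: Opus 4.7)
The plan is to apply the $J$-form identity \eqref{jjform} of Theorem \ref{gascaru} to the $q$-Mellin--Barnes integral representation of the ${}_{12}W_{11}$ obtained in Theorem \ref{thm91}, using exactly the sets ${\bf a}$, ${\bf c}$, ${\bf d}$ defined in \eqref{acdprodAaa}, \eqref{acdprodAaab}. Counting multiplicities, ${\bf a}$ contains two $\pm$-pairs and hence has cardinality $A=4$, while ${\bf c}$ contains three explicit entries together with three entries encoded by the cube-root notation $\om$, hence cardinality $C=6$. Since $C=A+2$, the hypothesis of the $J$-representation is met at its threshold, and each of the $C=6$ resulting summands contains a nonterminating balanced ${}_{6}\phi_5(q,q)$, matching the shape of the inner series on the right-hand side of \eqref{eqthm92}.

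First I would partition the six terms of $J$ according to the two natural subgroups of ${\bf c}$: the three \emph{quadratic} indices $c_k\in\{\sqrt{qax/(yz)},\sqrt{qay/(xz)},\sqrt{qaz/(xy)}\}$, and the three \emph{cubic} indices $c_k=\omega^j a^{5/6}\sqrt{q/(xyz)}$ for $j\in\{0,1,2\}$. Since $d_1d_2=1$ for the chosen ${\bf d}$, the theta-weight $\vartheta(hc_kd_1,h/(c_kd_2);q)$ in each summand depends only on $c_k^2$; for $c_k=\sqrt{qax/(yz)}$ one computes $hc_kd_1=hx$ and $h/(c_kd_2)=h\,yz/(qa)$, giving exactly the theta weights in the $x$-labelled entry of the first symmetric sum $\II{x;y,z}$ of \eqref{eqthm92}, the $y$- and $z$-entries arising by the obvious relabelling. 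Under the analogous substitution for the cubic $c_k$ one finds $hc_kd_1=h\omega^j a^{1/3}$ and $h/(c_kd_2)=h\omega^{-j}xyz/(qa^{4/3})$, which are precisely the theta-weight arguments displayed inside the second symmetric sum $\II{\omega;\omega^2,\omega^3}$; since $\omega^{-1}=\omega^2$ and $\omega^{-2}=\omega$, the three cubic contributions combine into that symmetric sum.

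The overall constant in front of \eqref{eqthm92} arises by combining the integral prefactor of Theorem \ref{thm91} with the constant $2\pi/(q;q)_\infty$ standing in front of $J$ in \eqref{jjform}. The cube-root prefactor $1/(\omega,\omega^2;q)_\infty$ attached to the second symmetric sum emerges when one factors out the common $q$-shifted factorials $(\omega;q)_\infty$ and $(\omega^2;q)_\infty$ from the denominator $({\bf c}_{[k]}/c_k;q)_\infty$ of every cubic summand, since among the two remaining cubic indices the ratios $c_{k'}/c_k$ with $k'\ne k$ are always the two nontrivial cube roots of unity. The remaining simplifications --- reducing products of infinite $q$-shifted factorials with $\pm$-arguments to the compact $\pm$-shorthand used in \eqref{eqthm92}, and identifying the top and bottom parameters of the inner ${}_{6}\phi_5$'s via \eqref{Jdefn} --- are then mechanical, aided by \eqref{sq-1}.

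I expect the main obstacle to be the symmetric-sum bookkeeping rather than any individual manipulation: keeping track of the deletion sets ${\bf c}_{[k]}$, verifying that each cubic summand produces exactly the same pattern of cube-root factorials (so that $(\omega,\omega^2;q)_\infty^{-1}$ can be cleanly extracted as a single prefactor), and confirming that the six raw summands regroup into the two compact symmetric sums $\II{x;y,z}$ and $\II{\omega;\omega^2,\omega^3}$ as displayed. Once this reorganization is verified, equating the $G$- and $J$-representations of the same integral immediately yields \eqref{eqthm92}.
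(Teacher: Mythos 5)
Your proposal is correct and follows essentially the same route as the paper: the paper's proof is precisely to apply \eqref{jjform} with the sets \eqref{acdprodAaa}, \eqref{acdprodAaab} and connect with Theorem \ref{thm91}, and your detailed bookkeeping of the theta weights and the regrouping into the two symmetric sums fills in exactly the mechanical steps the paper leaves implicit. Your cardinality count $(A,C,D)=(4,6,2)$ (counting the $\pm$ and $\om$ conventions with multiplicity) is the correct one, yielding the ${}_6\phi_5$'s of \eqref{eqthm92}, whereas the ``$(3,5,2)$'' stated in the proof of Theorem \ref{thm91} is evidently a slip.
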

\begin{proof}
Applying ${\bf a}$, ${\bf c}$ and ${\bf d}$ in
\eqref{acdprodAaa}, \eqref{acdprodAaab} to \eqref{jjform}
and connecting with Theorem \ref{thm91}
%with \eqref{gasprahprod2phi1}
completes the proof.
\end{proof}

\begin{rem}
If you choose for some $n\in\Z$, 
%\begin{equation}
$h\in q^n\left\{\frac{1}{x},\frac{1}{y},\frac{1}{z},\frac{qa}{xy},\frac{qa}{xz},\frac{qa}{yz},
%\frac{1}{a^{1/3}},
\frac{\om}{a^{1/3}},
%\frac{\omega^2}{a^{1/3}},
\frac{\om qa^\frac43}{xyz}
%\frac{\omega^2 qa^\frac43}{\omega^4xyz},
%\frac{qa^\frac43}{\omega^6xyz}
\right\},$
%\end{equation}
then the six-term transformation in 
\eqref{eqthm92} reduces to a five-term transformation. 
However we leave the representation of 
these transformations to the reader.
\end{rem}

\subsection{The Verma--Jain ${}_{10}W_{9}$ transformation}

There is a transformation for a ${}_{10}W_9$ represented as a sum of two balanced ${}_5\phi_4(q)$ which we originally located in 
\cite[(7.11)]{RahmanVerma93},
where it refers the reader to
\cite[(4.1)]{VermaJain82}. 
In this subsection we exploit
this transformation to write the ${}_{10}W_9$ as a $q$-Mellin--Barnes integral and then from there to obtain five and four term transformation formulas 
for the ${}_{10}W_9$.

\begin{thm}
\label{thm94}
Let $q\in\CCdag$, $a,b,x,y,z,\sigma\in\CCast$, $w=\expe^{i\psi}$. Then
\begin{eqnarray}
&&\hspace{-0.5cm}\Whyp{10}{9}{a^2}{b^2,x,qx,y,qy,z,qz}
{q^2,\frac{q^3a^6}{(bxyz)^2}}=
\frac{(q,\pm qa,x,y,z,\frac{qa^2}{xy},\frac{qa^2}{xz},\frac{qa^2}{yz};q)_\infty}
{2\pi\vartheta(h,h\frac{xyz}{qa^2};q)(\pm\frac{qa}{b},\frac{qa^2}{x},\frac{qa^2}{y},\frac{qa^2}{z};q)_\infty}\nonumber\\
&&\hspace{1.0cm}\times
\int_{-\pi}^\pi
\frac{((\frac{h}{a}\sqrt{\frac{xyz}{q}},\frac{qa}{h}\sqrt{\frac{q}{xyz}}
)\frac{\sigma}{w},(\frac{h}{a}\sqrt{\frac{xyz}{q}},
\frac{qa}{h}\sqrt{\frac{q}{xyz}},\pm\frac{qa^2}{\sqrt{xyz}},
\frac{q^\frac32a^3}{b^2\sqrt{xyz}})\frac{w}{\sigma};q)_\infty}
{((\frac{1}{a}\sqrt{\frac{xyz}{q}},a\sqrt{\frac{q}{xyz}}
)\frac{\sigma}{w},(a\sqrt{\frac{qx}{yz}},
a\sqrt{\frac{qy}{xz}},
a\sqrt{\frac{qz}{xy}},
\pm\frac{qa^2}{b\sqrt{xyz}})\frac{w}{\sigma}q)_\infty}\dd\psi,
\end{eqnarray}
where the maximum modulus of the denominator factors in the integrand is less than unity
and 
we assume there are no vanishing denominator factors
(see Remark \ref{rem41}).
\end{thm}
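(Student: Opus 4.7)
The plan is to mirror the structure of the proofs of Theorems \ref{thm51}, \ref{thm61}, and especially \ref{thm91}. First, I would invoke the Verma--Jain transformation \cite[(4.1)]{VermaJain82} (equivalently \cite[(7.11)]{RahmanVerma93}) which writes the nonterminating very-well-poised ${}_{10}W_9$ on the left-hand side as a symmetric sum of two nonterminating balanced ${}_5\phi_4$ basic hypergeometric series of argument $q$. Both summands have precisely the form required to apply the second equality of Theorem \ref{gascaru}, namely equation \eqref{qqform}, which converts such a symmetric $H({\bf a},{\bf c},{\bf d};q)$ expression into a $q$-Mellin--Barnes integral.

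Next, reading the parameter assignments off the $\sigma/w$ and $w/\sigma$ blocks of the stated integrand, I would take
\begin{align*}
{\bf a}&:=\left\{\pm\tfrac{qa^2}{\sqrt{xyz}},\,\tfrac{q^{3/2}a^3}{b^2\sqrt{xyz}}\right\},\\
{\bf c}&:=\left\{a\sqrt{\tfrac{qx}{yz}},\,a\sqrt{\tfrac{qy}{xz}},\,a\sqrt{\tfrac{qz}{xy}},\,\pm\tfrac{qa^2}{b\sqrt{xyz}}\right\},\\
{\bf d}&:=\left\{\tfrac{1}{a}\sqrt{\tfrac{xyz}{q}},\,a\sqrt{\tfrac{q}{xyz}}\right\},
\end{align*}
so that $(A,C,D)=(3,5,2)$ and $C-A-2=0$, matching the ${}_5\phi_4(q,q)$ structure demanded by \eqref{Hdefn}. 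The identification $f=h$, together with $d_1d_2=1$, reproduces the first two factors in both the numerator $\sigma/w$ and $w/\sigma$ blocks. A short computation verifies that each ${}_5\phi_4$ produced by $H({\bf a},{\bf c},{\bf d};q)$ is balanced, since the identity $c_1c_2c_3c_4c_5=a_1a_2a_3$ follows directly from the choices above. Moreover $hd_1/d_2=h\frac{xyz}{qa^2}$, which matches the second argument of the modified theta function $\vartheta(h,h\frac{xyz}{qa^2};q)$ in the integral prefactor. Solving for the ${}_{10}W_9$ in the resulting equality then yields the stated integral representation.

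The main obstacle will be the bookkeeping required to reconcile the prefactor coming out of the Verma--Jain identity with the prefactor $\frac{2\pi\vartheta(f,fd_1/d_2;q)}{(q;q)_\infty}$ produced by \eqref{qqform}. In particular, one must track how the squared base $a^2$ entering the ${}_{10}W_9$ interacts with the identity \eqref{sq-1} to produce the $(\pm qa;q)_\infty$ and $(\pm qa/b;q)_\infty$ factors that appear in the stated normalization. The convergence of the contour integral (the maximum modulus of denominator factors in the integrand being less than unity), the constraint $d_l/d_{l'}\notin\Omega_q$, and the non-vanishing denominator hypotheses collected in Remark \ref{rem41} are all inherited automatically from the hypotheses of Theorem \ref{gascaru}, completing the proof.
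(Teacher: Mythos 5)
Your proposal is correct and follows essentially the same route as the paper: invoke the Verma--Jain transformation \cite[(4.1)]{VermaJain82} (with $(a,b)\mapsto(a^2,b^2)$, and noting the typos there and in \cite[(7.11)]{RahmanVerma93}) to write the ${}_{10}W_9$ as a sum of two balanced ${}_5\phi_4(q,q)$'s, and then apply Theorem \ref{gascaru} via \eqref{qqform} with exactly the sets ${\bf a}$, ${\bf c}$, ${\bf d}$ and $(A,C,D)=(3,5,2)$ that the paper uses. The remaining bookkeeping you describe is precisely what the paper's proof leaves implicit as well.
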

\begin{proof}
Start with \cite[(4.1)]{VermaJain82}
\begin{eqnarray}
&&\hspace{-1.3cm}\Whyp{10}{9}{a^2}{b^2,x,qx,y,qy,z,qz}{q^2,
\frac{q^3a^6}{(bxyz)^2}}\nonumber\\
&&\hspace{0.0cm}=\frac{(qa^2,\frac{qa^2}{xy},\frac{qa^2}{xz},\frac{qa^2}{yz};q)_\infty}
{(\frac{qa^2}{x},\frac{qa^2}{y},\frac{qa^2}{z},
\frac{qa^2}{xyz};q)_\infty}\qhyp54{x,y,z,\pm\sqrt{q}\frac{a}{b}}
{q\frac{a^2}{b^2},\frac{xyz}{a^2},\pm\sqrt{q} a}{q,q}\nonumber\\
&&\hspace{0.3cm}+
\frac{(qa^2,x,y,z,\frac{q^2a^4}{b^2xyz},\pm\sqrt{q}\frac{a}{b},\pm\frac{q^\frac32a^2}{xyz};q)_\infty}{(\frac{qa^2}{b^2},\frac{qa^2}{x},\frac{qa^2}{y},\frac{qa^2}{z},\frac{xyz}{qa^2},\pm\sqrt{q}a,\pm\frac{q^\frac32a^3}{bxyz};q)_\infty}
\qhyp54{\frac{qa^2}{xy},\frac{qa^2}{xz},\frac{qa^2}{yz},\pm\frac{q^\frac32a^3}{bxyz}}
{\frac{q^2a^2}{xyz},\frac{q^2a^4}{b^2xyz},\pm
\frac{q^\frac32a^3}{xyz}}{q,q},
\end{eqnarray}
where we have replaced $(a,b)\mapsto(a^2,b^2)$ in the original reference.
Note that in \cite[(4.1)]{VermaJain82} there is a typo in 
the second term, namely the numerator factor $q^2a^2/(bx^2yz)$ should
be replaced with $q^2a^2/(bxyz)$. Furthermore note that this same formula appears also in \cite[(7.11)]{RahmanVerma93}, however there are several typos in the second term of their formula.
Now use Theorem \ref{gascaru} with the following 
sets of parameters with $(A,C,D)=(3,5,2)$
\begin{eqnarray}
&&\hspace{-0.5cm}{\bf a}:=\left\{
\pm\frac{qa^2}{\sqrt{xyz}},
\frac{q^\frac32 a^3}{b^2\sqrt{xyz}}
\right\},\,
%\\&&
{\bf c}:=\left\{
a\sqrt{\frac{qx}{yz}},
a\sqrt{\frac{qy}{xz}},
a\sqrt{\frac{qz}{xy}},
\pm\frac{qa^2}{b\sqrt{xyz}}
%\frac{\omega q^\frac12a^\frac56}{\sqrt{xyz}},
%\frac{\omega^2 q^\frac12a^\frac56}{\sqrt{xyz}}
\right\},
\label{acdprodAaaa}\\&&
\hspace{-0.5cm}{\bf d}:=\left\{\frac{1}{a}\sqrt{\frac{xyz}{q}},a\sqrt{\frac{q}{xyz}}\right\}.
\label{acdprodAaaab}
\end{eqnarray}
This completes the proof.
\end{proof}

Now we compute a five-term transformation for the ${}_{10}W_{9}$.

\begin{thm}
Let $q\in\CCdag$, 
$a,b,x,y,z\in\CCast$,
and we assume there are no vanishing denominator factors
(see Remark \ref{rem41}). Then
\begin{eqnarray}
&&\hspace{-1.2cm}\Whyp{10}{9}{a^2}{b^2,x,qx,y,qy,z,qz}{q^2,\frac{q^3a^6}{(bxyz)^2}}=\frac{(\pm qa,x,y,z,\frac{qa^2}{xy},\frac{qa^2}{xz},\frac{qa^2}{yz};q)_\infty}
{\vartheta(h,h\frac{xyz}{qa^2};q)
(\pm\frac{qa}{b},\frac{qa^2}{x},\frac{qa^2}{y},\frac{qa^2}{z};q)_\infty}
\nonumber\\
&&\hspace{-0.7cm}\times\Vast(
\,\,\II{x;y,z}\frac{\vartheta(hx,h\frac{yz}{qa^2};q)(\pm\frac{\sqrt{q}a}{x},\frac{qa^2}{b^2x};q)_\infty}
{(x,\frac{y}{x},\frac{z}{x},\frac{qa^2}{yz},\pm\frac{\sqrt{q}a}{bx};q)_\infty}
\qhyp54{x,\frac{qa^2}{yz},\pm\frac{q^\frac12 x}{a},\frac{b^2x}{a^2}}
{\frac{qx}{y},\frac{qx}{z},\pm\frac{\sqrt{q}bx}{a}}{q,q}\nonumber\\
&&\hspace{-0.0cm}+\frac{(\pm b;q)_\infty}{2(-q;q)_\infty}\vast(\frac{\vartheta(h\frac{\sqrt{q}a}{b},h\frac{bxyz}{q^{3/2}a^3};q)}
{(\frac{q^{3/2}a^3}{bxyz},
\frac{bx}{\sqrt{q}a},
\frac{by}{\sqrt{q}a},
\frac{bz}{\sqrt{q}a}
;q)_\infty}
\qhyp54{\pm\frac{q}{b},\frac{\sqrt{q}a}{b},
\frac{\sqrt{q}b}{a},\frac{q^{3/2}a^3}{bxyz}}
{-q,\frac{q^{3/2}a}{bx},
\frac{q^{3/2}a}{by},
\frac{q^{3/2}a}{bz}
}{q,q}\nonumber\\
&&\hspace{0.4cm}+\frac{\vartheta(-h\frac{\sqrt{q}a}{b},-h\frac{bxyz}{q^{3/2}a^3};q)}
{(-\frac{q^{3/2}a^3}{bxyz},
-\frac{bx}{\sqrt{q}a},
-\frac{by}{\sqrt{q}a},
-\frac{bz}{\sqrt{q}a}
;q)_\infty}
\qhyp54{\pm\frac{q}{b},-\frac{\sqrt{q}a}{b},
-\frac{\sqrt{q}b}{a},-\frac{q^{3/2}a^3}{bxyz}}
{-q,-\frac{q^{3/2}a}{bx},
-\frac{q^{3/2}a}{by},
-\frac{q^{3/2}a}{bz}
}{q,q}\vast)
\!\!\vast).
\label{eq95}
\end{eqnarray}
\end{thm}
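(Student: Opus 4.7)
The plan is to apply identity \eqref{jjform} of Theorem \ref{gascaru} to the $q$-Mellin--Barnes integral representation of the nonterminating very-well-poised ${}_{10}W_9$ that was established in Theorem \ref{thm94}, using the same parameter sets \eqref{acdprodAaaa}--\eqref{acdprodAaaab}. Since $C=5$ and $A=3$ for this choice, one has $C-A-2=0$, so the argument $q(qc_k)^{C-A-2}a_1\cdots a_A/(d_1 d_2 c_1\cdots c_C)$ of every ${}_{A+2}\phi_{C-1}$ produced by \eqref{Jdefn} simplifies to a pure $q$, and the right-hand side of \eqref{jjform} becomes a sum of five nonterminating ${}_5\phi_4(q,q)$'s.

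The five elements of ${\bf c}$ split into two natural groups: the three ``root-type'' parameters $a\sqrt{qx/(yz)}$, $a\sqrt{qy/(xz)}$, $a\sqrt{qz/(xy)}$, and the two ``$\pm$-type'' parameters $\pm qa^2/(b\sqrt{xyz})$. When the summation index $k$ in \eqref{Jdefn} selects one of the first three, the corresponding term is carried to the other two by the cyclic permutation of $\{x,y,z\}$, so these three contributions collapse under the symmetric-sum notation $\II{x;y,z}$, producing the first line inside the large outer parentheses of \eqref{eq95}. When $k$ selects one of the $\pm$-type parameters, the two contributions differ only by a global sign in the associated $\pm$, and together they yield the last two displayed lines. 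The overall prefactor arises by multiplying the scalar $2\pi/(q;q)_\infty$ from \eqref{jjform} with the coefficient already present on the right-hand side of Theorem \ref{thm94}, and the factor $(\pm b;q)_\infty/(2(-q;q)_\infty)$ in the second and third lines of \eqref{eq95} is produced by the standard identity $(-1;q)_\infty=2(-q;q)_\infty$ applied to the leading $q$-shifted factorial that arises from the $c_k {\bf d}$ product in the $\pm$-type summands.

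The main obstacle is entirely symbolic bookkeeping: verifying, after substitution of \eqref{acdprodAaaa}--\eqref{acdprodAaaab}, that the upper parameter list $\{c_k{\bf d},\,qc_k/{\bf a}\}$ and lower parameter list $\{qc_k/{\bf c}_{[k]}\}$ of \eqref{Jdefn} coincide for each $k$ with the ${}_5\phi_4$ lists displayed in \eqref{eq95}, that the ratio $({\bf a}/c_k;q)_\infty/(c_k{\bf d},{\bf c}_{[k]}/c_k;q)_\infty$ matches the displayed coefficient (after expanding the theta functions $\vartheta(fc_kd_1,f/(c_kd_2);q)$ via $\vartheta(a;q)=(a,q/a;q)_\infty$ and setting $f=h$), and that the symmetry in $\{x,y,z\}$ for the first three terms is exact rather than merely apparent. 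Once these identifications are checked and the three root-type contributions are coalesced into a single symmetric sum, the theorem follows immediately.
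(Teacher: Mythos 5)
Your proposal is correct and follows essentially the same route as the paper's own (very terse) proof: apply \eqref{jjform} of Theorem \ref{gascaru} with the parameter sets \eqref{acdprodAaaa}--\eqref{acdprodAaaab} and connect with the integral of Theorem \ref{thm94}, the only additional content being your (accurate) accounting of how the five ${}_5\phi_4(q,q)$ terms group into the $\II{x;y,z}$ sum and the two $\pm$ terms with the $(-1;q)_\infty=2(-q;q)_\infty$ simplification. The sole quibble is attributional: the factor $(\pm b;q)_\infty$ comes from $({\bf a}/c_k;q)_\infty$ and the $(-1;q)_\infty$ from ${\bf c}_{[k]}/c_k$, not from the $c_k{\bf d}$ product, but this does not affect the argument.
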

\begin{proof}
Applying ${\bf a}$, ${\bf c}$ and ${\bf d}$ in
\eqref{acdprodAaaa}, \eqref{acdprodAaaab} to \eqref{jjform}
and connecting with Theorem \ref{thm94}
%with \eqref{gasprahprod2phi1}
completes the proof.
\end{proof}

\begin{rem}
If you choose for some $n\in\Z$, 
%\begin{equation}
$h\in q^n\left\{\frac{1}{x},\frac{1}{y},\frac{1}{z},\frac{qa^2}{xy},\frac{qa^2}{xz},\frac{qa^2}{yz},
\pm\frac{b}{\sqrt{q}a},
\pm\frac{q^\frac32 a^3}{bxyz}
\right\},
$
%\end{equation}
then the five-term transformation in 
\eqref{eq95} reduces to a four-term transformation. 
However we leave the representation of 
these transformations to the reader.
\end{rem}

\section*{Acknowledgements}
Much appreciation to Jimmy Mc Laughlin
for introducing the first author to the
literature on partial theta functions and to 
Tom Koornwinder for valuable discussions.
R.S.C-S acknowledges financial support through the research project PGC2018–096504-B-C33 supported by Agencia Estatal de Investigaci\'on of Spain.

%\bibliographystyle{plain}
%\bibliography{refbib}

\def\cprime{$'$} \def\dbar{\leavevmode\hbox to 0pt{\hskip.2ex \accent"16\hss}d}

\end{document}